\newtheorem{thm}{Theorem}[section]
\newtheorem{lem}[thm]{Lemma}
\newtheorem{prop}[thm]{Proposition}
\newtheorem{cor}[thm]{Corollary}
\newtheorem{Def}[thm]{Definition}
\newcommand{\vs}{\vspace{4mm}}
\newcommand{\no}{\noindent}
\newcommand{\A}{\mathcal{A}}
\newcommand{\BB}{\mathcal{B}}
\newcommand{\CC}{\mathbb{C}}
\newcommand{\tC}{\tilde{C}}
\newcommand{\F}{\mathcal{F}}
\newcommand{\M}{\mathcal{M}}
\newcommand{\OO}{\mathcal{O}}
\newcommand{\QQ}{\mathbb{Q}}
\newcommand{\Z}{\mathbb{Z}}
\newcommand{\al}{\alpha}
\newcommand{\be}{\beta}
\newcommand{\ga}{\gamma}
\newcommand{\Ga}{\Gamma}
\newcommand{\De}{\Delta}
\newcommand{\s}{\sigma}
\newcommand{\surj}{\twoheadrightarrow}
\newcommand{\rar}{\longrightarrow}
\newcommand{\inc}{\hookrightarrow}
\newcommand{\sta}{\stackrel}
\newcommand{\minus}{\backslash}
\newcommand{\x}{\times}
\newcommand{\ot}{\otimes}
\newcommand{\lgl}{\langle}
\newcommand{\rgl}{\rangle}
\newcommand{\del}{\partial}
\newcommand{\emp}{\varnothing}
\newcommand{\Dif}{\operatorname{Diff}}
\newcommand{\Conf}{\operatorname{Conf}^{fr}}
\newcommand{\link}{\operatorname{Link}}
\newcommand{\Star}{\operatorname{Star}}
\definecolor{olgreen}{RGB}{25,90,0}
\newcounter{samcounter}
\date{\today}
\begin{document}
\setcounter{page}{1}% to be adjusted when all contributions are assembled
%
% 3. Please fill in the template fields below 
% indicated by a \replace macro.
%    
%\long\def\replace#1{#1}%!!!!!!!!!!!!!!!!!!!!!!!!!!!!!!!!!!!!!!REMOVED!!!!!!!!!!!!!!!!!!!!!!!!!!!!!!!!!!!!!!!!!!!!!!!!!!!!!!

%    
%    Title: you may add a short title for running heads, if needed as an optional
%    argument as in the example below
%    \title[Guide to Contributors]{Guide for Contributors to the Handbook of Moduli}
%    
\title{Homological stability for mapping class groups of surfaces}

%    
%    Author information--add or delete authors as needed
%    
\author{Nathalie Wahl}
\address{University of Copenhagen}
\email{wahl@math.ku.dk}

%    
%    Classification and abstract
%    
\subjclass[2000]{Primary {57M99}; Secondary {57R50}}
\keywords{mapping class groups of surfaces, homological stability}

\begin{abstract}
We give a complete and detailed proof of Harer's stability theorem for the homology of mapping class groups of
surfaces, with the best stability range presently known. This theorem and its proof have seen several improvements since Harer's
original proof in the mid-80's, and our purpose here is to assemble these many additions. 
\end{abstract}  

\maketitle
\thispagestyle{empty}% suppress page number on title pagesr
%
% 4. Place the body of your article here. 
% Please use \section as the top level division.
%

\section{Introduction}

The purpose of this paper is to give a complete and detailed proof of Harer's stability theorem for the homology of the mapping class
groups of surfaces, with the best known bound.
Harer's paper has been improved a number of times over the past 35 years, by various authors and the argument given here 
attempts to give a ``best of'' from these papers.

\smallskip

Let $S_{g,r}$ denote a surface of genus $g$ with $r$ boundary components. The mapping class group of $S_{g,r}$,
$$\Ga_{g,r}:=\pi_0\Dif(S_{g,r}\ \textrm{rel}\ \del),$$
is the group of components of the orientation preserving diffeomorphism group of $S_{g,r}$, where the diffeomorphisms are assumed to be the identity on the
boundary of $S_{g,r}$.  We consider in this paper the homology of these groups. Recall that the homology of a group $G$ (as
a group) equals the homology of its classifying space $BG$ (as a space), and that the moduli space of Riemann surfaces 
$\M_{g,r}$ is a model for the classifying space $B\Ga_{g,r}$ when $r>0$, and a rational model when $r=0$, i.e.~ 
$H_*(\Ga_{g,r},\Z)\cong H_*(\M_{g,r},\Z)$ when $r>0$ and $H_*(\Ga_{g,0},\QQ)\cong H_*(\M_{g},\QQ)$.

Gluing a pair of pants along two or one of its boundary components define inclusions
$\al\colon  S_{g,r+1}\inc S_{g+1,r}$ and $\beta\colon S_{g,r}\inc S_{g,r+1}$, which 
induce maps on the mapping class groups
$$\al_g\colon  \Ga(S_{g,r+1})\to \Ga(S_{g+1,r}) \ \ \textrm{and}\ \  \beta_g\colon \Ga(S_{g,r})\to \Ga(S_{g,r+1})$$
by extending diffeomorphisms to be the identity on the added pairs of pants. 

Note that $\be_g$ is injective, with left inverse the map 
$$\delta_g\colon  \Ga(S_{g,r+1})\to \Ga(S_{g,r})$$
induced by gluing a disc on one of the newly created boundary components. 
The main theorem proved in this paper is the following:

\begin{thm}\label{main}
Let $g\ge 0$ and $r\ge 1$. The map $$H_*(\al_g)\colon H_*(\Ga(S_{g,r+1}),\Z)\to H_*(\Ga(S_{g+1,r}),\Z)$$ 
is surjective for $*\le \frac{2}{3}g+\frac{1}{3}$ and an isomorphism for 
$*\le \frac{2}{3}g-\frac{2}{3}$. The map 
$$H_*(\beta_g)\colon H_*(\Ga(S_{g,r}),\Z)\to H_*(\Ga(S_{g,r+1}),\Z)$$ 
is always injective and is an isomorphism $*\le\frac{2}{3}g$. 
\end{thm}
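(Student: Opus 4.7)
The plan is to proceed by double induction, on the homological degree $*$ and on the genus $g$, and to prove the three statements (surjectivity and isomorphism ranges for $\al_g$, isomorphism range for $\be_g$) simultaneously so that each can be fed into the next. The backbone of the argument is the Quillen-style spectral sequence associated to the action of $\Ga(S_{g,r})$ on a suitable highly connected semi-simplicial set of arcs on the surface.

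Concretely, I would work with a complex $\mathcal{O}(S_{g,r})$ whose $p$-simplices are ordered $(p+1)$-tuples of disjoint, non-isotopic, non-separating arcs with endpoints on prescribed points of $\del S_{g,r}$, chosen so that cutting along any simplex leaves a connected subsurface. The mapping class group acts on $\mathcal{O}(S_{g,r})$, and the stabilizer of a $p$-simplex is, up to a shift, the mapping class group of the cut surface; by choice of the endpoints and the type of arc, this stabilizer is of the form $\Ga(S_{g-k,r+\epsilon})$ for small $k$, so that the inductive hypothesis applies. The action yields a first-quadrant spectral sequence
\[
E^1_{p,q} \;=\; \bigoplus_{[\sigma_p]} H_q\bigl(\mathrm{Stab}(\sigma_p)\bigr) \;\Longrightarrow\; H_{p+q}\bigl(\Ga(S_{g,r})\bigr)
\]
(with local coefficients ignored in orientable cases), and the edge $d^1$-differentials recover, after identification of stabilizers, alternating sums of the stabilization maps $\al$ and $\be$ for smaller genus. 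Comparing this with the analogous spectral sequence for $\Ga(S_{g+1,r-1})$ via $\al_g$ reduces the desired surjectivity/injectivity to a statement purely about the inductive hypothesis, provided the abutment vanishes in a sufficient range, which in turn requires $\mathcal{O}(S_{g,r})$ to be approximately $\lfloor \tfrac{g-c}{2}\rfloor$-connected.

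The main obstacle, and the source of the slope $\tfrac{2}{3}$ rather than the weaker $\tfrac{1}{2}$ appearing in early versions of Harer's theorem, is the connectivity estimate for $\mathcal{O}(S_{g,r})$. I would prove this by a Hatcher--Vassiliev style argument: given a simplicial map $f\colon S^n\to \mathcal{O}(S_{g,r})$, put $f$ into general position with respect to a fixed reference arc $a_0$, analyze the resulting ``bad'' simplices of intersection patterns, and perform surgeries on the arcs of $f$ to reduce the number of intersections. The resulting filtration by bad simplices, together with a secondary induction on the number of intersections, shows that $f$ is nullhomotopic in the full arc complex; a separate argument (coloring arcs or passing to an ordered variant) upgrades this to connectivity of $\mathcal{O}(S_{g,r})$ itself. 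The sharp bound comes from being careful about how much the genus drops when an arc is cut and about the parity issues that occur when arcs share endpoints.

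Once the connectivity is in hand, the rest is spectral sequence bookkeeping: the surjectivity of $\al_g$ is extracted by comparing $E^1$ columns in degree $p=0$ using the inductive surjectivity hypothesis in lower genus, and isomorphism follows by also controlling the $p=1$ column using the inductive isomorphism hypothesis. The statement about $\be_g$ is handled similarly, with the added input that $\be_g$ has the one-sided inverse $\delta_g$ induced by capping off a disc, which gives injectivity for free and reduces the isomorphism statement to surjectivity. The overall structure is thus standard; the novelty of the paper lies in assembling a single complex and connectivity estimate that simultaneously yield the sharp slope $\tfrac{2}{3}$ for all three maps.
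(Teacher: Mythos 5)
Your proposal captures the general Quillen-style scaffolding (action on a highly connected arc complex, stabilizers being smaller mapping class groups, double induction on genus with $\al$ and $\be$ interleaved, injectivity of $\be_g$ for free from the splitting $\delta_g$), but it has a genuine gap exactly where the stated $\tfrac{2}{3}$-range has to be produced. First, you locate the source of the slope $\tfrac{2}{3}$ in the connectivity estimate and then only ask for connectivity of order $\lfloor\tfrac{g-c}{2}\rfloor$; this is both insufficient and misattributed. With connectivity of slope $\tfrac12$ in $g$, the abutment of your spectral sequence fails to vanish in the range $q\le\tfrac{2g+1}{3}$ that the induction needs, so the argument as you set it up can only yield a stability slope of $\tfrac12$. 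In the actual proof the complexes $\OO^1(S_{g,r})$, $\OO^2(S_{g,r})$ are shown to be $(g-2)$--connected (slope $1$), and the connectivity plays no role in determining the slope; the $\tfrac23$ comes from the \emph{structure} of the induction: for the comparison along $\al$ the relative $E^1$-terms are identified with $H_q(\be_{g-p})$ (genus drops by $p$), while for the comparison along $\be$ they are $H_q(\al_{g-p-1})$ (genus drops by $p+1$), so over two homological degrees the genus drops by three. This identification rests on a precise symmetry (Ingredient 2 of the paper): one must use \emph{two} ordered arc complexes, with the marked points $b_0,b_1$ on the same or on different boundary components, and show that on stabilizers $\al$ restricts to $\be$ and $\be$ restricts to $\al$. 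Your single complex and the vague statement that the $d^1$'s are ``alternating sums of the stabilization maps'' do not supply this, and without it the bookkeeping cannot close with slope $\tfrac23$.

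Second, you omit the ingredient needed for the sharp constants at the edge of the range: when $q$ equals $\tfrac{2g+1}{3}$ (resp.\ $\tfrac{2g}{3}$) there is one surviving potential source of a differential, $d^1\colon E^1_{0,q}\to E^1_{-1,q}$, and the paper kills it by a geometric argument (``$1$-triviality''): the stabilizer of a vertex is conjugate, by a Dehn twist along an explicit curve, to the image of the stabilization, compatibly with the stabilizer of an edge, and a chain-level lemma on relative group homology (Lemma~\ref{relative} and Corollaries~\ref{I3cora}, \ref{I3corb}, the second also needing the inductive hypothesis) shows the differential vanishes. Nothing in your sketch plays this role, so even after fixing the connectivity input you would obtain the theorem only with strictly smaller constants. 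Finally, note that the paper's spectral sequence is a \emph{relative} (mapping cone) one comparing the two actions and converging to zero in a range, rather than two absolute sequences compared afterwards; that choice is what makes the $E^1$-terms literally the relative groups $H_q(\al_{g'})$, $H_q(\be_{g'})$ that the induction manipulates, and your version would need to be recast in that form (or an equivalent one) for the argument to run cleanly.
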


Considering $\delta_g\colon \Ga(S_{g,1})\to \Ga(S_{g,0})$ now induced by gluing a disc to the only boundary component 
of $S_{g,1}$, we also get a stability result for closed surfaces:

\begin{thm}\label{mainclosed}
The map $H_*(\delta_g)\colon H_*(\Ga_{g,1},\Z)\to H_*(\Ga_{g,0},\Z)$ is
surjective for $*\le \frac{2}{3}g+1$ and an isomorphism for $*\le \frac{2}{3}g$.  
\end{thm}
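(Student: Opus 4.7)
The strategy is to derive Theorem~\ref{mainclosed} from Theorem~\ref{main} via the Birman exact sequence. For $g\ge 2$, capping the boundary of $S_{g,1}$ by a disc with a central marked point yields the short exact sequence
$$1 \longrightarrow \pi_1(UT_g) \longrightarrow \Ga_{g,1} \xrightarrow{\ \de_g\ } \Ga_{g,0} \longrightarrow 1,$$
where $UT_g$ denotes the unit tangent bundle of the closed genus-$g$ surface $S_{g,0}$, which is aspherical for $g\ge 1$. Passing to classifying spaces presents $B\de_g$ as the base map of a fibration
$$UT_g \longrightarrow B\Ga_{g,1} \xrightarrow{\ B\de_g\ } B\Ga_{g,0},$$
so that $H_*(\de_g)$ becomes the edge homomorphism of the associated Serre spectral sequence
$$E^2_{p,q} = H_p\bigl(\Ga_{g,0};\,H_q(UT_g)\bigr) \Longrightarrow H_{p+q}(\Ga_{g,1}).$$
Since $UT_g$ is a closed $3$-manifold, only the rows $q=0,1,2,3$ contribute. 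The rows $q=0,3$ carry trivial $\Z$ coefficients; the rows $q=1,2$ have coefficients that, via the Gysin sequence of the circle bundle $S^1\to UT_g\to S_{g,0}$, are extensions of $\Z$ by $H_1(S_{g,0})\cong \Z^{2g}$ with the standard symplectic $\Ga_{g,0}$-action.

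To obtain the stated surjectivity for $*\le \tfrac{2}{3}g+1$ and isomorphism for $*\le \tfrac{2}{3}g$, I would show that every $E^2_{p,q}$ with $q\ge 1$ and $p+q$ in the corresponding range either vanishes or is killed by a differential from a higher row, so that the edge map realises the required bounds. I would run a simultaneous induction on the homological degree $*$, treating Theorem~\ref{mainclosed} together with an auxiliary twisted stability statement that $H_p(\Ga_{g,0};H_1(S_{g,0}))$ also stabilises in $g$ in the same slope-$\tfrac{2}{3}$ range. The untwisted rows $q=0,3$ are controlled by the inductive hypothesis for Theorem~\ref{mainclosed} itself (applied at strictly smaller homological degree), while the rows $q=1,2$ reduce by the Gysin filtration to the twisted statement. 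The twisted statement is obtained from Theorem~\ref{main} by recognising $\{H_1(S_{g,0})\}$ as a coefficient system of finite degree and re-running the Quillen-style simplicial arguments of the preceding sections with these coefficients.

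The main obstacle is matching the slope-$\tfrac{2}{3}$ stability range through both the twisted-coefficient argument and the Gysin extension, so that the edge homomorphism inherits exactly the bounds $\tfrac{2}{3}g$ and $\tfrac{2}{3}g+1$ claimed in Theorem~\ref{mainclosed}. A loss of even one unit of range at any intermediate step would break the induction, so the constants must be tracked carefully through every spectral sequence and every coefficient comparison. Small-genus cases $g\le 1$, where $UT_g$ fails to be aspherical or the Birman sequence degenerates, either have vacuous statements in Theorem~\ref{mainclosed} or can be verified directly from the explicit descriptions of $\Ga_{1,1}$ and $\Ga_{1,0}=SL_2(\Z)$.
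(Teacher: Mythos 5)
Your route—Birman exact sequence, the fibration with fibre $UT_g$, and an analysis of the resulting Serre spectral sequence—is genuinely different from the paper's, which never touches twisted coefficients: there one resolves $B\Dif(S_{g,r})$ by the semi-simplicial space $B\Ga_{g,\bullet+r+1}$ (Theorem~\ref{resolution}) and compares the two spectral sequences for $r=0,1$ via the disc-gluing map, so that only Theorem~\ref{main} with constant $\Z$ coefficients is needed. As sketched, however, your plan has two genuine gaps. First, the input you need on the rows $q=1,2$ is not available from this paper: stability with coefficients in the symplectic module $H_1(S_{g,0})$ is not a corollary of Theorem~\ref{main} obtained by ``recognising a finite-degree coefficient system''; it is a separate theorem (Ivanov, Boldsen, Randal-Williams--Wahl) whose proof requires redoing the spectral sequence argument with coefficient systems, and whose range is shifted by the degree of the system. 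Since you yourself note that losing even one unit of range breaks the induction, the claim that the slope-$\frac{2}{3}$ constants survive the twisted argument and the Gysin extensions is precisely the hard point, and it is not addressed.

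Second, and more fundamentally, stability is not vanishing. To get injectivity of the edge map in degrees $\le\frac{2}{3}g$ you must prove $E^\infty_{p,q}=0$ for all $q\ge 1$ with $p+q$ in that range, but the $E^2$-terms there are generically nonzero (e.g.\ $E^2_{0,3}\cong\Z$, generated by the fundamental class of $UT_g$, and the rows $q=0,3$ carry the full homology of $\Ga_{g,0}$), so everything hinges on exhibiting differentials that kill them; your sketch supplies no mechanism for this. Note also that in the homological Serre spectral sequence $d^r\colon E^r_{p,q}\to E^r_{p-r,q+r-1}$, so a class in a row $q\ge1$ can only be hit from strictly \emph{lower} rows, not ``from a higher row'': for instance the class in $E^2_{0,3}$ can only be killed from positions $(2,2)$, $(3,1)$ or $(4,0)$, and a nonzero differential leaving the bottom row is exactly what would obstruct the surjectivity statement one degree up. So the relevant differentials (essentially cap products with the vertical Euler class coming from the central $\Z$ in $\pi_1(UT_g)$) have to be identified and computed in the stable range, not just posited to exist. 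Until the twisted stability theorem with its precise range and this differential analysis are carried out, the argument does not close; the paper's comparison of the two resolutions avoids both issues entirely.
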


Theorems \ref{main} and \ref{mainclosed} were first proved by Harer in \cite{Har85} with a stability bound
of the order of $\frac{1}{3}g$. This was improved to $\frac{1}{2}g$ shortly afterwards
by Ivanov \cite{Iva87,Iva89,Iva93}.  The first complete proof of a 
$\frac{2}{3}g$-range is due to Boldsen, though it is 
based on an earlier unpublished preprint of Harer \cite{Bol09,Har93}. 
We stated in the above theorems the bounds obtained by Randal-Williams in \cite{RW09}. These bounds are a slight
improvement of \cite{Bol09}. With our
knowledge of the stable homology (\cite{Miller,Mor87}, see also \cite{MadWei}) and using recent calculations of Morita 
\cite[Thm.~1.1]{Mor03},  
it follows that this last range is best possible for $g=2$ mod 3, and at most one off the best possible bound otherwise. 

\smallskip

Harer's stability theorem for mapping class groups of surfaces was inspired by the analogous pre-existing theorem
for general linear groups which goes back to Quillen and Borel (see \cite{Bor74,vdK}). 
The general line of argument, due to Quillen,   
is to build for each group in the sequence considered a simplicial complex
with an action of the group, so that the stabilizers of simplices are previous groups in the sequence---the spectral
sequence for the action of the group on the simplicial complex decomposes then the homology of the group in terms of the
homology of the stabilizers of the action, making an inductive argument possible. (The mapping class
groups of surfaces being a 2-parameter family, we will need here two simplicial complexes for each pair $(g,n)$.) 
For this argument to work, the simplicial complexes need to be highly connected and showing this 
high connectivity is the hard part of the proof. 

The simplicial complexes we use here are, as in \cite{RW09}, two ordered arc complexes (defined in Section~\ref{ordsect}) 
for surfaces
with boundaries, and a disc complex (defined in Section~\ref{closedsect}) for closed surfaces. 
The connectivity arguments are a mix of arguments from the papers 
\cite{Har85,Hat91,Iva89,RW09,Wahl08}.

\medskip

The stability for mapping class groups of surfaces has been generalized in several
directions. When considering surfaces with punctures, there are two different generalizations. 
Let $\Ga(S_{g,k}^r)$ and $\Ga(S_{g,k}^{(r)})$ denote the mapping class group of a surface of genus $g$, with $k$ boundary
components and $r$ punctures, where the punctures are assumed to be fixed by the mapping classes in the first case, and to be
fixed up to permutations in the second case.   
Theorems~\ref{main} and \ref{mainclosed} still hold if $\Ga(S_{g,k})$ is replaced by $\Ga(S_{g,k}^r)$ or $\Ga(S_{g,k}^{(r)})$, 
that is the maps $\al,\beta$ and $\delta$ defined above 
are isomorphisms in homology in the same range. This can be deduced from the unpunctured case by a spectral sequence argument (see
\cite{Han}), or by introducing punctures into the proof. 
An additional stabilization map can be defined by increasing the 
number of punctures. For $\Ga(S_{g,k}^{(r)})$, this map induces an isomorphism in homology in a range increasing with the number of punctures \cite[Prop.~1.5]{HatWah07}. 
When the surface is a punctured disc, this is Arnold's classical stability theorem for braid groups \cite{Arn70}. 
Furthermore, there are homological stability theorems 
for spin mapping class groups \cite{Bau04,Har90}, for mapping class groups of non-orientable surfaces \cite{Wahl08}, 
and more generally for moduli spaces of surfaces with certain tangential structures \cite{RW09}. 
For higher dimensional manifolds, the homology of the mapping class groups of
3-manifolds stabilizes by connected sum and boundary connected sum with another 3-manifold \cite{HatWah07}, 
and for simply-connected 4-dimensional manifolds, connected sums with $\CC P^2\#\overline{\CC
P^2}$ gives a stabilization \cite{Gia08}.

\smallskip

The present volume contains a survey \cite{Mad10} about the homology of the moduli space of curves, which gives an overview of the
computation of the stable homology of mapping class groups, following the work of Madsen-Weiss and
Galatius-Madsen-Tillmann-Weiss \cite{GMTW,MadWei}. A survey about more general stability phenomena in the topology of moduli spaces
can be found in \cite{Coh10}.

\medskip

\no
{\em Organization of the paper:}  Section~\ref{ordsect} defines the simplicial complexes used in the proof of
Theorem~\ref{main} and gives their main properties, though the proof of high connectivity of the complexes
is postponed until Section~\ref{connectivity}. 
Section~\ref{SSsect} proves Theorem~\ref{main}
via a spectral sequence argument which builds on Section~\ref{ordsect} and \ref{connectivity}. 
Section~\ref{closedsect} takes care of the case of closed surfaces, proving Theorem~\ref{mainclosed}. 
Finally, the appendix recalls some facts about simplicial complexes and piecewise linear topology, needed in particular
for the connectivity arguments in Section~\ref{connectivity}.

\medskip

The author would like to thank Allen Hatcher, Ib Madsen and in particular 
Oscar Randal-Williams for answering  many questions, 
ranging from  ancient to modern mathematics, as well as Richard Hepworth for a
nice picture. The author was supported by the
Danish National Sciences Research Council  (DNSRC) and the European
Research Council (ERC), as well as by the Danish National Research Foundation (DNRF) through the Centre for Symmetry and Deformation.

\section{The ordered arc complex}\label{ordsect}

In this section, we define the simplicial complexes used to prove homological stability. (Basic definitions and properties of
simplicial complexes are given in the appendix.) The complexes admit actions of corresponding 
mapping class groups of surfaces, and we study the properties of these actions. Propositions~\ref{I1},\ref{I2},\ref{I3} and \ref{I4}
give four key ingredients for the proof of homological stability given in the next section. 

\medskip

Consider $S$ an oriented surface with $\del S\neq\emp$. By an {\em arc} in $S$, we always mean an embedded arc intersecting $\del S$
only at its endpoints and doing so transversally. We work with isotopy classes of arcs, where the isotopies are assumed to fix the 
endpoints of the arcs. 

Let $b_0,b_1$ be two distinct points in $\del S$. We will consider in this section 
collections of arcs with disjointly embeddable interiors, and with endpoints the pair 
$\{b_0,b_1\}$. 
Note that the orientation of the surface induces an ordering on such collections at $b_0$ and at $b_1$.

A collection of arcs with disjoint interiors $\{a_0,\dots,a_p\}$ is called {\em non-separating} if
its  complement  $S\minus(a_0\cup \dots\cup a_p)$ is connected.

\begin{Def}\label{OOdef}
Let $\OO(S,b_0,b_1)$ be the simplicial complex with set of vertices the isotopy classes of non-separating arcs with boundary
$\{b_0,b_1\}$.  A  $p$--simplex
of $\OO(S,b_0,b_1)$ is a collection of $p+1$ distinct isotopy classes of arcs $\lgl a_0,\dots,a_p\rgl$ which 
can be represented by a collection of arcs with disjoint interiors which is non-separating and such that
the anticlockwise ordering of $a_0,\dots,a_p$ at $b_0$ agrees with the clockwise ordering at $b_1$. 

Up to isomorphism, there are two such complexes, depending on whether $b_0$
and $b_1$ are on the same or on different boundary components. 
We will denote by $\OO^1(S)$ the complex with a choice of $b_0,b_1$ on the
same boundary component,  and $\OO^2(S)$ the complex with a choice of $b_0,b_1$ on 
two different boundary components. 
\end{Def} 

\begin{figure}[ht]
\begin{lpic}{OO.3(0.4,0.4)}
 \lbl[b]{40,48;$b_1$}
 \lbl[b]{-2,49;$b_0$}
 \lbl[b]{195,41;$b_1$}
 \lbl[b]{142,41;$b_0$}
\end{lpic}
\caption{1-simplex of $\OO^1(S)$ and 2-simplex of $\OO^2(S)$}\label{OO}
\end{figure}

The action of the mapping class group $\Ga(S)=\pi_0\Dif(S,\del S)$ on the surface $S$ induces an action on $\OO(S,b_0,b_1)$. 
The remaining of the section gives four properties of this
action.

\begin{prop}[Ingredient 1]\label{I1} For the complex $\OO^i(S_{g,r})$, $i=1,2$, we have: 

{\rm (1)} $\Ga(S_{g,r})$ acts transitively on $p$--simplices for each $p$.

{\rm (2)} There exists isomorphisms 
$$St_{\OO^1}(\s_p) \mathop{\sta{s_1}{\rar}}_{\cong} \Ga(S_{g-p-1,r+p+1}) \ \ \ \textrm{and}\ \ \   
St_{\OO^2}(\s_p) \mathop{\sta{s_2}{\rar}}_{\cong} \Ga(S_{g-p,r+p-1}),$$ 
where $St_{\OO^i}(\s_p)$ denotes the stabilizer of a $p$--simplex $\s_p$ of $\OO^i(S)$. 
\end{prop}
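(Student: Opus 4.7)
My plan is to prove both parts via the classical ``change of coordinates'' technique for arcs on surfaces.

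For part (1), given two $p$--simplices $\s = \lgl a_0, \dots, a_p \rgl$ and $\s' = \lgl a_0', \dots, a_p' \rgl$ of $\OO^i(S_{g,r})$, I aim to show that the cut surfaces $S^\s$ and $S^{\s'}$, obtained by cutting along all arcs of the respective simplices, are orientation-preservingly diffeomorphic by a boundary-respecting map; gluing back will then yield the desired mapping class. Both cut surfaces are connected by the non-separating hypothesis. Cutting along each arc raises the Euler characteristic by $1$, so $\chi(S^\s) = \chi(S_{g,r}) + (p+1)$. For the number of boundary components, the ordering condition in Definition~\ref{OOdef} ensures inductively that each arc to be cut joins $b_0$ and $b_1$ on a single common boundary component of the current cut surface, except for the first cut in the $\OO^2$ case, which joins two distinct components and therefore reduces the boundary count by $1$. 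All other cuts split one component into two and raise the count by $1$, giving a final count of $r+p+1$ components in the $\OO^1$ case and $r+p-1$ in the $\OO^2$ case. The classification of compact oriented surfaces with boundary now provides an orientation-preserving diffeomorphism $\phi \colon S^\s \to S^{\s'}$; choosing it to pair the cut-boundary arcs arising from $a_i$ with those arising from $a_i'$ for each $i$, the map $\phi$ glues up to an element of $\Ga(S_{g,r})$ carrying $\s$ to $\s'$.

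For part (2), I would first observe that the stabilizer $St_{\OO^i}(\s_p)$ of $\s_p = \lgl a_0, \dots, a_p \rgl$ must fix each $a_i$ individually as an isotopy class: any $\phi \in \Ga(S_{g,r})$ is orientation-preserving and fixes $\del S$ pointwise, so preserves the canonical anticlockwise ordering of the $a_i$'s at $b_0$. Second, I would apply the standard identification of the subgroup of $\Ga(S_{g,r})$ fixing each $a_i$ up to isotopy with the mapping class group of the cut surface $S^{\s_p}$; this follows from the usual fibration sequence for diffeomorphism groups restricting to embeddings of arcs, together with the contractibility of each component of the space of embeddings of a proper arc system rel endpoints. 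Combining this with the identification of $S^{\s_p}$ from part (1) yields the two isomorphisms claimed.

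The main obstacle I anticipate is the combinatorial bookkeeping in part (1): precisely verifying that the ordering condition at $b_0$ and $b_1$ forces each arc, at the moment of its cut, to have both endpoints on a single boundary component of the current cut surface (except for the first cut in $\OO^2$). This is what makes the boundary count come out to exactly $r + p + 1$ or $r + p - 1$, and it is also what allows one to choose the diffeomorphism $\phi$ in part (1) to respect the pairing of cut-edges required for gluing back into a self-diffeomorphism of $S_{g,r}$.
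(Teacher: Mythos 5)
Your part (1) is essentially the paper's argument: the paper also computes $\chi(S\minus\s)=\chi(S)+p+1$, counts the boundary components of the cut surface (listing them explicitly all at once rather than cutting arc by arc, but your inductive bookkeeping via the ordering condition reproduces exactly the paper's list $[\del_0^+\!S*a_0],[\bar a_0*a_1],\dots$, resp.\ $[\del_0S*a_0*\del_1S*\bar a_p],[\bar a_0*a_1],\dots$), and then invokes the classification of surfaces together with a label-compatible choice of diffeomorphism of cut surfaces to glue back. Your counts $r+p+1$ and $r+p-1$ and the resulting genera agree with the statement, so this half is fine and is the same proof.

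For part (2) you take a genuinely different, more packaged route: you run the fibration $\Dif(S\ \textrm{rel}\ \del S\cup\s)\to\Dif(S\ \textrm{rel}\ \del S)\to Emb$ for the whole arc system at once and cite contractibility of the components of the embedding space, extracting both injectivity and surjectivity from the long exact sequence; the paper instead uses the fibration (and Gramain's $\pi_1(Emb^\del(I,S))=0$, one arc at a time) only for injectivity, and proves surjectivity by an explicit inductive isotopy argument. The catch is that your stated inputs do not quite deliver what you claim. The fibration plus contractibility of components identifies $\Ga(S\minus\s)$ with the subgroup of classes $\phi$ for which $\phi$ applied to the arc system lies in the basepoint component of $Emb$, i.e.\ such that the system $\{\phi(a_i)\}$ is isotopic to $\{a_i\}$ \emph{simultaneously, as a system}. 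The stabilizer, however, is a priori only the subgroup fixing each isotopy class \emph{individually} (after your permutation step). Bridging ``individually isotopic disjoint systems'' to ``isotopic as systems'' is exactly the nontrivial content that the paper proves by hand: the induction that pushes each isotopy off the previously fixed arcs using $\pi_2=0$ of the cut surface and Epstein's homotopy-implies-isotopy theorem. This fact is true and standard, but it is not a consequence of contractibility of the embedding-space components alone, so as written your ``standard identification'' hides the hardest step; you should either cite it precisely or supply an argument of the paper's type. A smaller instance of the same issue sits in your claim that $\theta=\mathrm{id}$: you need the anticlockwise ordering at $b_0$ to be an invariant of the isotopy classes of a disjoint system (the literal arcs $\phi(a_i)$ are ordered like the $a_i$, but they only represent the classes $a_{\theta(i)}$); the paper sidesteps this by first rigidifying $\phi$ so that it maps arcs to arcs on the nose and only then reading off the order near $\del S$.
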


\begin{proof}
Let $i=1,2$ and 
$\s=\lgl a_0,\dots,a_p\rgl$ be a $p$--simplex of $\OO^i(S)$ represented by arcs $a_0,\dots,a_p$ with disjoint interiors in $S$. We consider 
the surface $S$ ``cut along $\s$'', i.e. the surface $S\minus\s=S\minus
(N_0\cup \dots \cup N_p)$ with $N_j$ a small neighborhood of $a_j$. Its Euler characteristic satisfies 
$\chi(S\minus\s)=\chi(S)+p+1$ as a cellular decomposition of $S\minus \s$ can be obtained from one of $S$ by doubling the arcs $a_j$. 
We can moreover count (and describe) the boundary components of $S\minus \s$: in addition to the $r-i$ components of
$\del S$ disjoint from $b_0,b_1$, $\del(S\minus \s)$ has \\
- (when $i=1$) $p+2$ components labeled $[\del_0^+\!S*a_0],[\bar a_0*a_1],\dots, [\bar a_{p-1}*a_p],[\bar a_p*\del_0^-\!S]$, \\
- (when $i=2$) $p+1$ components labeled $[\del_0S*a_0*\del_1S*\bar a_p],[\bar a_0*a_1],\dots, [\bar a_{p-1}*a_p]$, \\
where $a_i,\bar a_i$ denote the left and right side of the arc, and $\del_0^+\!S, \del_0^-\!S, \del_0S$ and $\del_1S$ are as shown in
Figure~\ref{OO.cut}. 
\begin{figure}[ht]
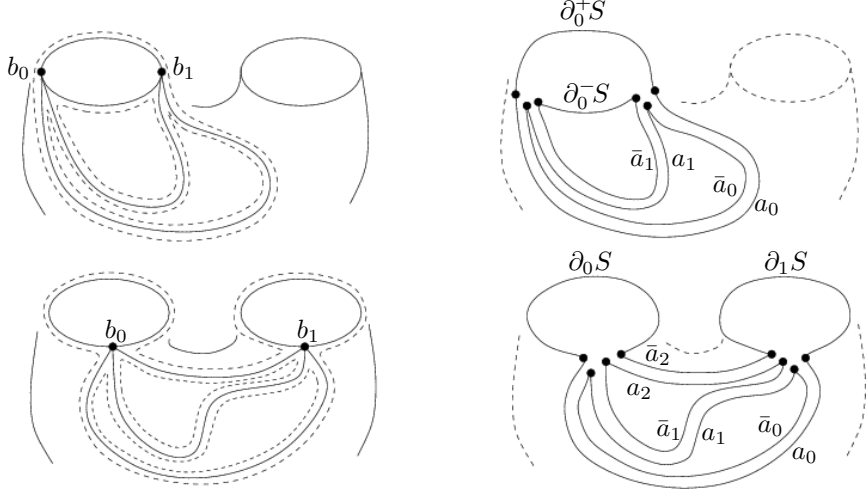

\begin{lpic}{OO.cut.3(0.5,0.5)}
 \lbl[t]{-1,115;$b_0$}
 \lbl[t]{43,115;$b_1$}
 \lbl[t]{25,45;$b_0$}
 \lbl[t]{76,45;$b_1$}
 \lbl[t]{198,77;$a_0$}
 \lbl[t]{187,83;$\bar a_0$}
 \lbl[t]{176,89;$a_1$}
 \lbl[t]{165,90;$\bar a_1$}
 \lbl[t]{149,131;$\del_0^+\!S$}
 \lbl[t]{150,109;$\del^-_0\!S$}
\lbl[t]{151,63;$\del_0S$}
 \lbl[t]{203,63;$\del_1S$}
 \lbl[t]{208,11;$a_0$}
 \lbl[t]{199,19;$\bar a_0$}
 \lbl[t]{184,16;$a_1$}
 \lbl[t]{172,18;$\bar a_1$}
\lbl[t]{164,28;$a_2$}
 \lbl[t]{169,38;$\bar a_2$}
\end{lpic}
\caption{Cutting along the simplices of Figure~\ref{OO}}\label{OO.cut}
\end{figure}

As $S\minus \s$ is connected by assumption, we have $S\minus \s\cong S_{g_\s,r_\s}$ with Euler characteristic
$$\chi(S\minus\s)=\ \ 2-2g_\s-r_\s \ \ =\ \  2-2g-r+p+1\ \ =\chi(S)+p+1$$
By the above when $i=1$, $r_\s=r+p+1$ and thus $g_\s=g-p-1$, and when $i=2$, $r_\s=r+p-1$ and thus $g_\s=g-p$. 

As $g_\s$ and $r_\s$ depend only on $p$, not on the simplex itself, it follows that the complement of any two $p$--simplices $\s,\s'$
are diffeomorphic.  Moreover, 
we can choose a diffeomorphism $S\minus\s\cong S\minus\s'$ which is compatible with the labels of the boundary by arcs
of $\s$ (resp.~$\s'$), 
and hence glues to a
diffeomorphism of $S$ taking $\s$ to $\s'$. Property (1) in the proposition follows.

\medskip

The map $\Ga(S\minus\s)\to \Ga(S)$ which glues $S\minus \s$ back along $\s$ has image a subgroup of $St_{\OO^i}(\s)$. We want to show that this
map defines an isomorphism  $\Ga(S\minus\s)\cong St_{\OO^i}(\s)$. The second part of the proposition will then follow as, 
by the above, $S\minus\s$ is a surface of type $S_{g-p-1,r+p+1}$ 
when $i=1$ and of type $S_{g-p,r+p-1}$ when $i=2$. 

To check surjectivity, 
consider an element $\phi$ of the stabilizer of the above simplex $\s$. Stabilizing $\s$ means that for each $i$ we have an isotopy 
$\phi(a_i)\simeq_{h_i}a_{\theta(i)}$ for some
permutation $\theta$ of $\{0,1,\dots,p\}$. We want to show that $\theta$ is the identity and $\phi$ is isotopic to a map that fixes 
the arcs pointwise.

By the isotopy extension theorem, the isotopy $h_0$ can be extended to an ambient isotopy, i.e. levelwise diffeomorphisms
 $H_0\colon S\x I\to S$  with $H_0(S,0)=id_S$ and $H_0(\phi(a_0),t)=h_0(\phi(a_0),t)$. Composing with the end diffeomorphism gives a map  
$\phi_1=H_0(1,-)\circ \phi$ which satisfies $\phi_1\simeq \phi$ and $\phi_1(a_0)=a_{\theta(0)}$. 

Now suppose that we have constructed $\phi_i\simeq \phi$ with $\phi_i(a_j)=a_{\theta(j)}$ for each $j<i$. Consider 
$h_i'\colon I\x I\to S$, the isotopy taking $\phi_i(a_i)$ to $a_{\theta(i)}$ . Approximate $h_i'$ by a map $\tilde h_i$ 
transverse to $a_{\theta(0)},\dots,a_{\theta(i-1)}$. We will inductively make it disjoint from these arcs.  
Suppose that the image of $\tilde h_i$ is disjoint from $a_{\theta(0)},\dots,a_{\theta(k-1)}$ and consider $\tilde h_i$ as a map to 
$S\minus(a_{\theta(0)}\cup \dots\cup a_{\theta(k-1)})$. We want to make it disjoint from
$a_{\theta(k)}$. 
The inverse image of $a_{\theta(k)}$ is a union of circles and intervals. For each circle
component (or arc component going back to the same endpoint), $\tilde h_i$ restricted to the disc it separates in $I\x I$ defines
an element of $\pi_2(S\minus(a_{\theta(0)}\cup \dots\cup a_{\theta(k-1)}))$ as its boundary is mapped to a subarc of $a_{\theta(k)}$. As this $\pi_2$ 
is trivial, $\tilde h_i$ is homotopic to a map
$g_i$ with no such circle intersections, i.e. such that either $g_i^{-1}(a_{\theta(k)})$ is a (possibly empty) 
union of intervals from $g_i^{-1}(b_0)$ to $g_i^{-1}(b_1)$. 
If the union in non-empty, $g_i$ would restrict to a homotopy 
$a_{\theta(k)}\simeq a_{\theta(i)}$. As homotopic arcs are isotopic by \cite[Thm 3.1]{Epstein}, this would contradict the fact that $a_{\theta(k)}$
and $a_{\theta(i)}$ represent different vertices of a simplex. 
Hence, proceeding inductively, we can replace the isotopy $h_i$ by a homotopy
between $\phi(a_i)$ and $a_{\theta(i)}$ in  $S\minus(a_{\theta(0)}\cup\dots\cup a_{\theta(i-1)})$. Applying again \cite[Thm 3.1]{Epstein}, 
we get an isotopy in the
same surface, which we can replace by an ambient isotopy $H_i$. Considering $H_i$ as an isotopy of $S$ fixed on
$a_{\theta(0)},\dots,a_{\theta(i-1)}$, we define $\phi_{i+1}=H_i(1,-)\circ \phi_i$.  Repeating the
construction, we obtain $\phi_{p+1}\simeq \phi$ satisfying $\phi_{p+1}(a_i)=a_{\theta(i)}$ for each $i$. 

We finally note that $\theta$ must be the identity as $\phi_{p+1}$ fixes $\del S$, and hence must be isotopic to the identity in a
neighborhood of $\del S$. 

Hence any element of the stabilizer of $\s$ is in the image of $\Ga(S\minus \s)\to\Ga(S)$. 
We are left to show injectivity of that map. This can be seen as follows: To a non-separating arc $I$ in $S$ is associated a fibration
 $$\Dif(S \ \textrm{rel}\ \del S\cup I)\to\Dif(S\ \textrm{rel}\ \del S)\to Emb^\del_{ns}(I,S)$$
where $Emb^\del_{ns}(I,S)$ denotes the space of embeddings of a non-separating arc in $S$ with $\del I$ mapping to chosen points $A,B\in\del S$. The
fibration is induced by restricting a diffeomorphism of $S$ to the given arc. 
By \cite[Thm 5]{Gramain}, $\pi_1(Emb^\del(I,S))=0$.  Hence the long exact sequence of homotopy groups of the fibrations 
gives an injection $\pi_0(\Dif(S \ \textrm{rel}\ \del S\cup I))\inc \pi_0(\Dif(S\ \textrm{rel}\ \del S))$, i.e. 
$\Ga(S\minus I)\inc \Ga(S)$. This corresponds to the case where $\s$ is a vertex. 
Repetitive use of this inclusion gives the desired result for any simplex $\s$. 
\end{proof}

Consider the maps 
$\al\colon S_{g,r+1}\to S_{g+1,r}$ and $\beta\colon S_{g,r}\to S_{g,r+1}$
which glue a strip respectively on two and one boundary components (see Figure~\ref{ab}). 
There are induced maps on the mapping class groups
$$\al_g\colon \Ga(S_{g,r+1})\to \Ga(S_{g+1,r}) \ \ \textrm{and}\ \  \beta_g\colon \Ga(S_{g,r})\to \Ga(S_{g,r+1})$$
by extending the
mapping classes to be the identity on the added strips. 
(These maps are isomorphic to the maps $\al_g$ and $\beta_g$ described in the introduction in terms of pairs of pants.)
Moreover, if $b_0,b_1$ are as in the Figure~\ref{ab}, 
we get induced maps 
$$\al\colon \OO^2(S_{g,r+1})\to \OO^1(S_{g+1,r}) \ \ \textrm{and}\ \  \beta\colon\OO^1(S_{g,r})\to \OO^2(S_{g,r+1})$$
equivariant with respect to the group maps $\al_g$ and $\beta_g$.

\begin{figure}[ht]
\begin{lpic}{ab.2(0.45,0.45)}
 \lbl[b]{158,46;$b_1$}
 \lbl[b]{114,46;$b_0$}
 \lbl[b]{71,41;$b_1$}
 \lbl[b]{20,41;$b_0$}
 \lbl[b]{44,53;$\al$} 
 \lbl[b]{135,45;$\be$}
\end{lpic}
\caption{The maps $\al$ and $\be$}\label{ab}
\end{figure}

The second ingredient is a new ingredient in Randal-Williams' proof. It is a symmetry property of the maps $\al$ and
$\be$ which will be crucial in
the spectral sequence argument. It roughly says that, on stabilizers of the action of the mapping class group $\Ga(S)$ on the complexes
$\OO^i(S)$, 
$\al$ induces $\beta$ and $\beta$ induces $\al$.

\begin{prop}[Ingredient 2]\label{I2}
Let $\al,\beta$ be as in Figure~\ref{ab}. Given a $p$--simplex $\s_p$ in
$\OO^2(S)$, we have
$$\xymatrix{\Ga(S_{g,r+1})\ar[d]^{\al_g} & \ar@{_(->}[l] St_{\OO^2}(\s_p) \ar[r]^-{s_2}_-\cong \ar[d] & \Ga(S_{g-p,r+p}) \ar[d]^-{\beta_{g-p}} \\
\Ga(S_{g+1,r}) &  \ar@{_(->}[l] St_{\OO^1}(\al(\s_p)) \ar[r]^-{s_1}_-\cong & \Ga(S_{g-p,r+p+1}) }$$
and given  a $p$--simplex $\s_p$ in $\OO^1(S)$, we have
$$\xymatrix{\Ga(S_{g,r})\ar[d]^{\be_g} & \ar@{_(->}[l] St_{\OO^1}(\s_p) \ar[r]^-{s_1}_-\cong \ar[d] & \Ga(S_{g-p-1,r+p+1}) \ar[d]^-{\al_{g-p-1}} \\
\Ga(S_{g,r+1}) &  \ar@{_(->}[l] St_{\OO^2}(\be(\s_p)) \ar[r]^-{s_2}_-\cong & \Ga(S_{g-p,r+p}) }$$
where $s_1,s_2$ are the isomorphisms of Proposition~\ref{I1}. 
\end{prop}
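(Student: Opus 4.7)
The plan is to trace elements around both diagrams using the explicit form of the isomorphisms $s_1, s_2$ from the proof of Proposition~\ref{I1}, where $s_i^{-1}$ sends a mapping class of the cut surface $S\minus \sigma_p$ to its extension by the identity on tubular neighborhoods of the arcs $a_0,\dots,a_p$. The key geometric observation to record first is that the strip glued by $\al$ (resp.\ $\be$) can be chosen disjoint from representative arcs for $\sigma_p$, since $b_0, b_1$ are disjoint from the attaching region of the strip. Hence ``cut along $\sigma_p$'' and ``glue the strip'' commute as operations on the surface, and in particular $\al_g$ maps $St_{\OO^2}(\sigma_p)$ into $St_{\OO^1}(\al(\sigma_p))$ (and symmetrically for $\be_g$). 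This gives commutativity of the left squares, since the vertical maps and their restrictions to stabilizers agree by construction.

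For the right square in the $\al$ case, I would trace $\phi \in \Ga(S_{g-p, r+p})$ both ways. Going down then right, $\al_g \circ s_2^{-1}(\phi)$ is the mapping class $\psi$ of $S_{g+1,r}$ obtained by extending $\phi$ by the identity on the arc neighborhoods $N_0, \dots, N_p$ and on the $\al$-strip. Going right then down, $\be_{g-p}(\phi)$ first extends $\phi$ by the identity on a strip $T$ glued onto the cut surface $S_{g,r+1}\minus \sigma_p$, producing a class in $\Ga(S_{g-p,r+p+1})$, which $s_1^{-1}$ then extends by the identity on arc neighborhoods of $\al(\sigma_p)$. Both composites agree provided one identifies $T$ with the image of the $\al$-strip in the cut surface; after this identification, both produce the mapping class that is $\phi$ on the core and the identity on the arc neighborhoods and the strip.

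The main obstacle, and the only nontrivial point, is precisely this identification. I need to verify that when $S_{g+1,r}$ is cut along $\al(\sigma_p)$, the two arcs where the original $\al$-strip was attached lie on the single merged boundary component $[\del_0 S*a_0*\del_1 S*\bar a_p]$ of $S_{g,r+1}\minus\sigma_p$, and that the resulting strip attachment is a $\be$-operation (creating a new boundary component) rather than a handle (adding genus). The boundary labelling computed in the proof of Proposition~\ref{I1} delivers the first point, while the orientation convention of Definition~\ref{OOdef} (matching the anticlockwise order at $b_0$ with the clockwise order at $b_1$) forces the strip to sit on the side of $a_0$ and $a_p$ that produces a $\be$-type attachment. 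The $\be$-case is then entirely analogous, with $\sigma_p$ now \emph{splitting} the boundary component containing $b_0, b_1$ into two, and the $\be$-strip consequently spanning two separate boundary components of the cut surface as an $\al$-attachment.
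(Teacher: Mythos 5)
Your proposal is correct and follows essentially the same route as the paper: both arguments rest on the boundary labelling of $S\minus\s_p$ computed in Proposition~\ref{I1}, which shows that the $\al$-strip has both feet on the single boundary component $[\del_0S*a_0*\del_1S*\bar a_p]$ of the cut surface (hence acts as a $\be$-attachment), while the $\be$-strip has its feet on the two distinct components $[\del_0^+\!S*a_0]$ and $[\bar a_p*\del_0^-\!S]$ (hence acts as an $\al$-attachment). The only cosmetic difference is your explicit element-tracing and your worry about a handle versus a $\be$-type attachment, which is automatic rather than a consequence of the ordering convention: once both feet of the strip lie on a single boundary component of an orientable surface, the attachment must create a new boundary component and leave the genus unchanged.
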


\begin{proof}
Consider first the map $\al$. On $S$, $\al$ is defined by gluing a strip, one side glued to $\del_0S$ and one to $\del_1S$. In 
$S\minus\s_p$, the two components are part of a single boundary component, the one denoted $[a_0*\del_0S*\bar a_p*\del_1S]$ in the proof of
Proposition~\ref{I1}. Hence the strip in glued to a single boundary in $S\minus\s_p$. As the stabilizer of $\s_p$ is identified with 
the mapping class
group of $S\minus\s_p$, $\al$ induces the map $\beta$ on $St_{\OO_2}(\s_p)$.

For $\beta$, both ends of the strip are glued to $\del_0S$: one end to $\del_0S^+$ and one to $\del_0S^-$ in the notation of
Proposition~\ref{I1} (see also Figure~\ref{OO.cut}). Given a $p$--simplex $\s_p$ of
$\OO_1(S)$, we have that $\del_0S^+$ and $\del_0S^-$ are in two different boundary components of $S\minus\s_p$, namely the components
denoted   $[\del_0^+\!S*a_0]$ and $[\bar a_p*\del_0^-\!S]$ in the proof of Proposition~\ref{I1}. 
Hence the map $\beta$ induces the map $\al$ on
$St_{\OO_1}(\s_p)$.  
\end{proof}

The complexes $\OO^1$ and $\OO^2$ are defined to ``undo'' the maps $\al$ and $\beta$ in the sense that the inclusion of stabilizers of
vertices $St_{\OO_i}(\s_0)\inc \Ga(S)$ are the maps $\al$ and $\beta$ respectively. The third ingredient makes this precise, as part of
a stronger statement.

\begin{prop}[Ingredient 3]\label{I3}
Let $S_\al$ and $S_\be$ denote $S$ union a strip glued via $\al$ and $\beta$ respectively as in Figure~\ref{ab}.
The maps $\al\colon\Ga(S)\to \Ga(S_\al)$ and $\be\colon\Ga(S)\to \Ga(S_\be)$ are injective. Moreover,  
for any vertex $\s_0$ of $\OO^i(S)$, there are curves
$c_\al,c_\be$ (given in Figure~\ref{curves}) in $S_\al$ and $S_\be$
such that conjugation by  Dehn twists $t_{c_\al}$ and $t_{c_\be}$
along these curves fits into commutative diagrams 
$$\xymatrix{St_{\OO^2}(\s_0)\ar@{^(->}[d] \ar@{^(->}[r] & St_{\OO^1}(\al(\s_0))\ar@{^(->}[d]\ar@{-->}[dl]_{t_{c_\al}}
& & St_{\OO^1}(\s_0)\ar@{^(->}[d] \ar@{^(->}[r] & St_{\OO^2}(\be(\s_0))\ar@{^(->}[d]\ar@{-->}[dl]_{t_{c_\be}} \\
\Ga(S) \ar@{^(->}[r]^{\al} & \Ga(S_\al) & & \Ga(S) \ar@{^(->}[r]^{\be} & \Ga(S_\beta)}$$
i.e. there are conjugations  $St_{\OO^1}(\al(\s_0))\sim_{t_{c_\al}}\al(\Ga(S))$ in $\Ga(S_\al)$ relative to  $\al(St_{\OO^2}(\s_0))$, and 
$St_{\OO^2}(\beta(\s_0))\sim_{t_{c_\be}}\be(\Ga(S))$ in $\Ga(S_\be)$ relative to  $\be(St_{\OO^1}(\s_0))$.
\end{prop}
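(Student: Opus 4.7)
The plan is to handle injectivity first, then to address the conjugation statements for $\al$ and $\be$ in parallel.

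For injectivity, I observe that the strip attached by $\al$ has a co-core arc $I_\al\subset S_\al$: a non-separating arc whose complement is canonically identified with $S$. Under this identification, $\al$ factors as $\Ga(S)\cong \Ga(S_\al\minus I_\al)\inc \Ga(S_\al)$, where the last inclusion reglues along $I_\al$. The injectivity of this last map is exactly the statement established in the final paragraph of the proof of Proposition~\ref{I1}, via the fibration $\Dif(S_\al\ \textrm{rel}\ \del S_\al\cup I_\al)\to\Dif(S_\al\ \textrm{rel}\ \del S_\al)\to Emb^\del_{ns}(I_\al,S_\al)$ and \cite[Thm 5]{Gramain}. The identical argument handles $\be$ via the co-core of the $\be$-strip.

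For the conjugation statement on the $\al$-side, I identify both relevant subgroups as stabilizers of vertices of $\OO^1(S_\al)$. The subgroup $St_{\OO^1}(\al(\s_0))$ is so by definition. For $\al(\Ga(S))$, I isotope the co-core $I_\al$ along $\del S_\al$ so that its endpoints land at $\{b_0,b_1\}$, producing an arc $J_\al$ representing a vertex of $\OO^1(S_\al)$ with $St_{\OO^1}(J_\al)=\Ga(S_\al\minus I_\al)=\al(\Ga(S))$. By the transitivity of Proposition~\ref{I1}(1), some $\phi\in\Ga(S_\al)$ sends $\al(\s_0)$ to $J_\al$, so conjugation by $\phi$ sends $St_{\OO^1}(\al(\s_0))$ to $\al(\Ga(S))$. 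The content of the proposition is that $\phi$ can be taken to be the single Dehn twist $t_{c_\al}$ along the explicit curve $c_\al$ of Figure~\ref{curves}; I would verify this by inspecting the local picture of $\al(\s_0)$ together with the strip. The $\be$-side is entirely parallel.

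For the relative condition, I use that every class in $St_{\OO^2}(\s_0)\cong\Ga(S\minus\s_0)$ (Proposition~\ref{I1}) is represented by a diffeomorphism supported away from a collar of $\s_0\cup\del S$. Hence every element of $\al(St_{\OO^2}(\s_0))$ is supported on $S$ away from such a collar and in particular away from the strip. The curve $c_\al$ can be chosen inside the neighborhood of $\s_0$ together with the strip, so that $t_{c_\al}$ and $\al(St_{\OO^2}(\s_0))$ have disjoint supports and hence commute---which is exactly the commutativity of the two triangles in the first diagram. The $\be$-diagram follows symmetrically.

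The main obstacle is the concrete pictorial verification that the specific curves drawn in Figure~\ref{curves}---which are forced by the relative condition to sit in a neighborhood of the given arc together with the attached strip---are precisely the curves whose Dehn twists implement the isotopies $\al(\s_0)\mapsto J_\al$ and $\be(\s_0)\mapsto J_\be$. This amounts to a careful accounting of the cyclic order of $b_0$, $b_1$, and the two strip endpoints on the merged (resp.~split) boundary component.
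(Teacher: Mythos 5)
Your injectivity argument (via the co-core of the strip and the fibration from the proof of Proposition~\ref{I1}) and your observation that $t_{c_\al}$ commutes with $\al(St_{\OO^2}(\s_0))$ by disjointness of supports are both sound and agree in substance with the paper. The genuine gap is at the heart of the proposition: you never show that the conjugation is realized by the single Dehn twist $t_{c_\al}$ (resp.\ $t_{c_\be}$) along the specific curve of Figure~\ref{curves}. Transitivity (Proposition~\ref{I1}(1)) only produces \emph{some} $\phi\in\Ga(S_\al)$ carrying $\al(\s_0)$ to your arc $J_\al$, and such a $\phi$ need not commute with $\al(St_{\OO^2}(\s_0))$, so it does not give the relative statement; you then declare that the content of the proposition is that $\phi$ can be taken to be $t_{c_\al}$ and defer exactly this to an unspecified inspection of cyclic orders. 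That verification is not routine bookkeeping of endpoints; it is the one nontrivial geometric input, and nothing in your argument supplies it.

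The paper fills this in by two steps you are missing. First, it introduces an auxiliary arc $a_1$ in $\bar S$ parallel to the glued strip (Figure~\ref{a1phi}) and proves, via a diffeomorphism $\phi\simeq\mathrm{id}$ that fixes $a_0$ and carries a neighborhood of $a_1\cup\del\bar S$ onto a neighborhood of $\del S\cup(\bar S\minus S)$, that $St(a_1)=\al(\Ga(S))$ \emph{and} $St(\lgl a_0,a_1\rgl)=\al(St_{\OO^2}(a_0))$ as subgroups of $\Ga(\bar S)$; your identification $St(J_\al)=\al(\Ga(S))$ asserts the first equality while skipping this isotopy argument, and you never address the second. Second, and crucially, it invokes Harer's observation (\cite{Har93}, \cite[Prop.~3.2]{Bol09}): because $a_0$ and $a_1$ are ordered the \emph{same} way at $b_0$ and at $b_1$ (so they do not span a simplex of $\OO^i$), the Dehn twist along the closed curve $[a_0*a_1]$ --- which is exactly $c_\al$, resp.\ $c_\be$ --- takes $a_1$ to $a_0$, while commuting with $St(\lgl a_0,a_1\rgl)$ since it is supported near $a_0\cup a_1$. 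This is the mechanism by which a single, explicitly located Dehn twist realizes the conjugation relative to the common subgroup, and the explicit location is needed downstream (Corollaries~\ref{I3cora} and~\ref{I3corb} use that $c_\al$, $c_\be$ lie in a neighborhood of the strip, $\del S$ and $a_0$). Without this step your proposal reduces the proposition to precisely the claim it was supposed to prove.
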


\begin{figure}[ht]
\begin{lpic}{curves.2(0.42,0.42)}
 \lbl[b]{53,69;$\al$}
 \lbl[b]{190,60;$\be$}
 \lbl[b]{20,31;$a_0$} 
 \lbl[b]{155,60;$a_0$}
 \lbl[b]{96,32;\textcolor{olgreen}{$c_\al$}}
 \lbl[b]{221,60;\textcolor{olgreen}{$c_\be$}}
 \lbl[b]{109,0;(a)}
 \lbl[b]{235,0;(b)}
\end{lpic}
\caption{The curves $c_\al$ and $c_\beta$ of Proposition~\ref{I3} for $\s_0=\lgl a_0\rgl$}\label{curves}
\end{figure}

Note that the stabilizer of any two vertices of $\OO^i(S)$ are conjugate in $\Ga(S)$ as $\Ga(S)$ acts transitively on
the vertices of $\OO^i(S)$. So the proposition implies in particular that the stabilizer of a vertex in $\OO^1(S_\al)$ is isomorphic
to $\Ga(S)$, and that so is the stabilizer of a vertex in $\OO^2(S_\be)$.

The existence of the particular Dehn twist used
($t_{c_\al}$ and $t_{c_\be}$) is also used by  
Harer and Boldsen in their proof the $2/3$ stability range.  In
Randal-Williams, the above property is called
$1$--triviality.

\begin{proof}
 Let $\bar S$ denote either $S_\al$ or $S_\be$. 
Suppose $\s_0$ is represented by an arc $a_0$ in $S$, and denote also by $a_0$ the corresponding arc in $\bar S$. 
The first step of the proof in both cases is to exhibit an arc $a_1$ in $\bar S$ 
disjoint from $a_0$ (except at the
endpoints) such that $St_{\OO^1}(a_1)=\al(\Ga(S))$ (resp.~$St_{\OO^2}(a_1)=\be(\Ga(S))$) and  
$St_{\OO^1}(\lgl a_0,a_1\rgl)=\al(St_{\OO^2}(a_0))$ (resp.~$St_{\OO^2}(\lgl a_0,a_1\rgl)=\al(St_{\OO^1}(a_0))$) 
as subgroups of $\Ga(\bar S)$. 
In other words, we will show that both diagrams can be seen
as being of the form 
$$\xymatrix{St(\lgl a_0,a_1\rgl) \ar@{^(->}[r]\ar@{^(->}[d] & St(a_0) \ar@{^(->}[d]\\
St(a_1)\ar@{^(->}[r] & \Ga(\bar S)}$$  
The arc $a_1$ is given in Figure~\ref{a1phi}(a),(c) in both cases. To show that it satisfies the above,  
it is enough to produce a map $\phi\in \Dif(\bar S)$ such that 

(1) $\phi\simeq id$ and $\phi$ is constant on $a_0$,

(2) $\phi$ takes a neighborhood of $a_1\cup \del \bar S$ to a neighborhood of $\del S \cup (\bar S\minus S)$. \\
Indeed, an element $g\in St(a_1)\le \Ga(\bar S)$ can be assumed to fix a neighborhood of $a_1$ and $\del \bar S$. Then its conjugate 
$c_\phi(g)=\phi^{-1} g\phi$ fixes the strip $\bar S\minus S$ and a neighborhood of $\del S$, and hence is in the image of $\Ga(S)$. 
And conversely, $c_{\phi^{-1}}(g)$ identifies elements in the image of $\Ga(S)$ with elements of $St(a_1)$. 
But as $\phi\simeq id$, $g$ and
$c_\phi(g)$ (resp. $c_{\phi^{-1}}(g)$) represent the same element in $\Ga(\bar S)$ and $c_\phi$ is actually the identity. 
As $a_0$ is not affected, $c_\phi$ restricts to an identification of  $St(\lgl a_0,a_1\rgl)$ with the image of
$St(a_0)$ in $\Ga(\bar S)$. 

The map $\phi$ is obtained by thickening the neighborhood of $\del\bar S\cup a_1$ to include the shaded areas of $\bar S\minus S$ shown
in Figure~\ref{a1phi}(b),(d).  This is possible as the shaded areas are discs intersecting $\del\bar S\cup a_1$ in an arc in their boundary.

\begin{figure}[ht]
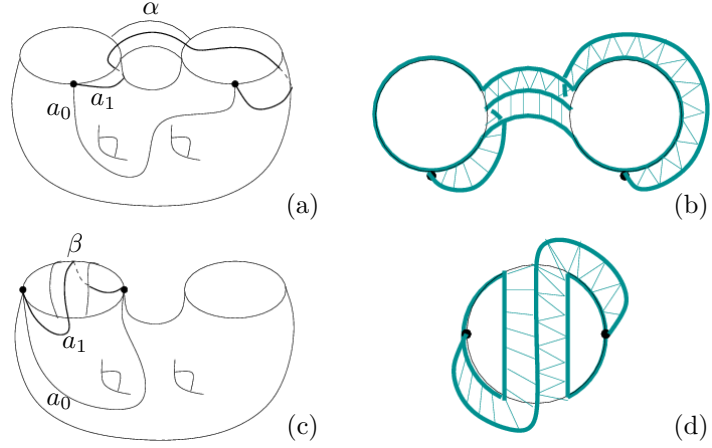

\begin{lpic}{a1phi.2(0.42,0.42)}
 \lbl[b]{44,135;$\al$} 
 \lbl[b]{20,58;$\be$}
 \lbl[b]{15,102;$a_0$} 
 \lbl[b]{29,105;$a_1$}
 \lbl[b]{20,28;$a_1$} 
 \lbl[b]{15,10;$a_0$}
 \lbl[b]{92,70;(a)}
 \lbl[b]{215,70;(b)}
 \lbl[b]{92,0;(c)}
 \lbl[b]{215,0;(d)}
\end{lpic}
\caption{The arc $a_1$ and the retraction $\phi$}\label{a1phi}
\end{figure}

Now in both cases, the arcs $a_0,a_1$ are ordered in the same way at $b_0$ and $b_1$ (that is they do {\em not} form a 1-simplex of
$\OO^i$). It was noted by Harer (\cite{Har93}, see also \cite[Prop. 3.2]{Bol09}) that in this case, the Dehn twist $\psi$ 
along the closed curve $[a_0*a_1]$ takes $a_1$ to $a_0$. Conjugation by $\psi^{-1}$ takes $St(a_0)$ to $St(a_1)$.  
As it lives in a neighborhood of $a_0,a_1$, it commutes with the elements of $St(a_0,a_1)$ and
hence conjugation by $\psi$ is the identity on that subgroup.  (The curve
$[a_0*a_1]$ is the curve drawn in Figure~\ref{curves} in each case.)

The above gives conjugations between the image of $\Ga(S)$ in $\Ga(\bar S)$ and the subgroups $St_{\OO^i}(a_0)$. Injectivity of the
maps $\al$ and $\beta$ then follows from the fact, proved in Proposition~\ref{I1}, that 
 $\Ga(S)$ is isomorphic to $St_{\OO^i}(a_0)$. 
\end{proof}

To be able to use Ingredient 3, we need to study the commutative square
occuring in Proposition~\ref{I3} from the point of view of group homology,
which we do now. 

\smallskip

Given a group $G$, we use the bar construction to compute the homology of
$G$. Hence a $k$-chain $c\in C_k(G)$ is of the form
$c=\sum_iz_i(g^i_1,\dots,g^i_k)$ with $g^i_j\in G$ and $z_i\in \Z$. By a {\em commuting diagram}, we will mean a diagram of
the form 
$$\xymatrix{\star \ar[r]^{a_1}\ar[d]_{b_0}&\star\ar[r]\ar[d]^{b_1} &\dots &
  \star\ar[r]^{a_k} \ar[d]_{b_{k-1}}&\star\ar[d]^{b_k}\\
 \star \ar[r]^{c_1}&\star\ar[r] &\dots &\star\ar[r]^{c_k}&\star}$$
 with $a_i,b_i,c_i\in G$ and $a_ib_i=b_{i-1}c_i$ for each $i$. 
Such a commuting diagram defines a copy of $\Delta^k\times I$ in the
classifying space of $G$ and hence an element of $C_{k+1}(G)$. Explicitly (and
with a choice of orientation), this chain is 
$$(a_1,\dots,a_k,b_k)-
(a_1,\dots,a_{k-1},b_{k-1},c_k)+\dots+(-1)^k(b_0,c_1,\dots,c_k).$$

\begin{lem}\label{relative}
Let $H,G_1,G_2$ be subgroups of a group $G$ 
fitting into a diagram
$$\xymatrix{H\ \ar@{^(->}[r]\ar@{^(->}[d] & \ G_1\ar@{^(->}[d]\ar@{-->}_t[dl]\\
G_2\ \ar@{^(->}[r] & \ G}$$
i.e.~such that $G_1$ and $G_2$ are conjugated by $t\in G$, with a common subgroup $H$
fixed by the conjugation. Then the map
$H_k(G_1,H)\rar H_k(G,G_2)$ induced by the inclusion factors as
$$\xymatrix{H_k(G_1,H) \ar[r]\ar[d]^\del &
  H_k(G,G_2)\\
H_{k-1}(H)\ar[ur]_{(-1)^k(\,\_\,\x t)}}$$
where $(h_1,\dots,h_{k-1}) \x t$ is the class given by the
commuting diagram
$$\xymatrix{\star \ar[r]^{h_1}\ar[d]_{t} & \star\ar[d]^{t}\ar[r] & \dots & \star \ar[d]\ar[r]^{h_{k-1}} &
  \star \ar[d]^{t}\\
\star \ar[r]^{h_1} & \star\ar[r] & \dots & \star \ar[r]^{h_{k-1}} & \star}$$
\end{lem}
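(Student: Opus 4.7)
The plan is to work at the level of the bar resolution and construct an explicit chain-level prism that realizes the asserted factorization. Given a relative $k$-cycle $c = \sum z_i(g_1^i,\dots,g_k^i)\in C_k(G_1)$---so that $\partial c\in C_{k-1}(H)$ and its class in $H_{k-1}(H)$ is $\partial[c]$ by definition of the connecting homomorphism---I associate to it a $(k+1)$-chain $c\times t\in C_{k+1}(G)$ built from the commuting diagram
\[\xymatrix{\star \ar[r]^{g_1}\ar[d]_{t} & \star\ar[d]^{t}\ar[r] & \dots & \star \ar[d]\ar[r]^{g_k} & \star \ar[d]^{t}\\
\star \ar[r]^{t^{-1}g_1t} & \star\ar[r] & \dots & \star \ar[r]^{t^{-1}g_kt} & \star}\]
via the formula introduced just above the statement. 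Because conjugation by $t^{-1}$ carries $G_1$ into $G_2$, the bottom face $c' := (t^{-1}g_1t,\dots,t^{-1}g_kt)$ lies in $C_k(G_2)$.

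The core of the argument is a direct computation of $\partial(c\times t)$ in the bar complex, which should yield the standard prism identity
\[\partial(c\times t) \;=\; (\partial c)\times t + (-1)^k(c'-c)\]
in $C_k(G)$. The term $(\partial c)\times t$ makes sense because $\partial c\in C_{k-1}(H)$ and $t$ centralizes $H$: for $h\in H$ one has $t^{-1}ht = h$, so the commuting diagram associated to $\partial c$ has identical top and bottom rows, exactly matching the diagram displayed in the statement of the lemma.

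Passing to the relative complex $C_*(G)/C_*(G_2)$, the chain $c'$ vanishes, so
\[\partial(c\times t) \;\equiv\; (\partial c)\times t + (-1)^{k+1}c \pmod{C_*(G_2)}.\]
Since the left-hand side is manifestly a boundary in $C_*(G,G_2)$, the right-hand side represents zero in $H_k(G,G_2)$, giving $[c] = (-1)^k[(\partial c)\times t]$. Combined with the identification of $\partial c$ with $\partial[c]\in H_{k-1}(H)$, this is exactly the claimed factorization of the inclusion-induced map $H_k(G_1,H)\to H_k(G,G_2)$ through $H_{k-1}(H)$.

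I expect the main obstacle to be sign bookkeeping in verifying the prism identity: $c\times t$ is an alternating sum of $k+1$ generators, each contributing $k+2$ boundary faces, and one must show that the interior faces cancel in pairs while the surviving ones reassemble into $c$, $c'$, and $(\partial c)\times t$ with the signs indicated. My strategy would be to check the formula by direct computation for $k=1,2$ (where everything can be written out explicitly in a few lines) and then either argue by induction on $k$ or quote the topological prism identity $\partial(\sigma\times I) = (\partial\sigma)\times I + (-1)^{\dim\sigma}(\sigma\times\{1\} - \sigma\times\{0\})$ and translate it into bar-complex notation.
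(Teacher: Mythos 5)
Your proposal is correct and follows essentially the same route as the paper: the same chain-level prism $c\times t$, the same boundary identity $d(c\times t)=(-1)^{k+1}c+dc\times t+(-1)^k t^{-1}ct$ (the paper likewise only asserts it, leaving the sign bookkeeping to the reader), and the same conclusion that $t^{-1}ct$ dies in $C_*(G)/C_*(G_2)$ so that $[c]=(-1)^k[dc\times t]$ in $H_k(G,G_2)$. Your remark that $t$ centralizes $H$, so that $\_\times t$ is well behaved on chains in $H$, is exactly the paper's parenthetical observation that $\_\times t\colon C_{k-1}(H)\to C_k(G)$ is a chain map.
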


\begin{proof}
A class $[c]\in H_k(G_1,H)$ is of the form $c=\sum_iz_i(g_1^i,\dots,g_k^i)$ with each  
$g^i_j\in G_1$ and $z_i\in\Z$, and with boundary $dc$ a chain in $H$. Extending the above
notation, denote by $c\times t$ the $(k+1)$-chain given by the linear combination of commuting
diagrams 
 $$\xymatrix{\star \ar[r]^{g^i_1}\ar[d]_{t} & \star\ar[d]^{t}\ar[r] & \dots & \star \ar[d]\ar[r]^{g^i_k} &
  \star \ar[d]^{t}\\
\star \ar[r]_{t^{-1}g^i_1t} & \star\ar[r] & \dots & \star \ar[r]_{t^{-1}g^i_kt} & \star}$$
One computes that $d(c\times t)=(-1)^{k+1}c+dc\times t+(-1)^kt^{-1}ct$. (In
particular, the map $\_\,\x t:C_{k}(H)\to C_{k+1}(G)$ is a chain map as $t^{-1}ct=c$
for any $c\in C_{k}(H)$.) As $t$ conjugates $G_1$ into $G_2$, we have that 
$[t^{-1}ct]=0$ in $H_*(G,G_2)$, and hence the image of $[c]$ in  $H_*(G,G_2)$ is equal to 
$(-1)^k[dc\times t]$. 
\end{proof}

We will use Proposition~\ref{I3} via the following two corollaries. 

\begin{cor}\label{I3cora}
Let $\s_0$ be a vertex of $\OO^2(S)$ and $\al(\s_0)$ its image in $\OO^1(S_\al)$. 
Then the map induced on relative homology by including the stabilizers 
$$H_*(St_{\OO^1}(\al(\s_0)),St_{\OO^2}(\s_0))\rar
H_*(\Ga(S_\al),\Ga(S))$$
is the zero map. 
\end{cor}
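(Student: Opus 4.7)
The plan is to apply Lemma~\ref{relative} directly to the commutative square furnished by Proposition~\ref{I3}, setting $G = \Ga(S_\al)$, $G_1 = St_{\OO^1}(\al(\s_0))$, $G_2 = \al(\Ga(S))$, $H = \al(St_{\OO^2}(\s_0))$, and $t = t_{c_\al}$. Proposition~\ref{I3} says precisely that $t_{c_\al}$ conjugates $G_1$ onto $G_2$ and acts as the identity on $H$ by conjugation, so the hypothesis of Lemma~\ref{relative} is satisfied. The lemma then factors the inclusion-induced map as
\[
H_k(G_1, H)\xrightarrow{\partial} H_{k-1}(H)\xrightarrow{(-1)^k(\_\times t_{c_\al})} H_k(G, G_2),
\]
so the corollary reduces to showing that the second arrow is zero.

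Because $t_{c_\al}$ centralizes $H$, the inclusion $H\times\langle t_{c_\al}\rangle\hookrightarrow G$ is a homomorphism, and the cross product factors as the K\"unneth map $H_{k-1}(H)\otimes H_1(\langle t_{c_\al}\rangle)\to H_k(H\times\langle t_{c_\al}\rangle)$ followed by the induced map to $H_k(G)$. The idea is to replace $t_{c_\al}$ in the cross product by an element $g\in G_2$ that also centralizes $H$ and has the same class as $t_{c_\al}$ in $H_1$ of the centralizer of $H$ in $G$. Once such a $g$ is in hand, K\"unneth gives $h\times t_{c_\al}=h\times g$ in $H_k(G)$, and $h\times g$ lies in the image of $H_k(G_2)\to H_k(G)$ via the subgroup $H\times\langle g\rangle\subset G_2$, hence dies in $H_k(G,G_2)$ by the long exact sequence of the pair.

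The main obstacle is constructing such a $g$. In sufficiently high genus, $\Ga(S_\al)$ has trivial abelianization and the existence of $g$ is automatic; in low genera one must exhibit an explicit identity in the mapping class group, most naturally a chain-type relation expressing $t_{c_\al}$, modulo commutators, as a product of Dehn twists along curves parallel to $\partial_0 S$ and $\partial_1 S$ which lie inside $S$ and commute with $H$. A more direct geometric route is to check that the curve $c_\al$ is ambient-isotopic in $S_\al$ to a curve in $S$, by pushing the arc $a_1$ off the attached strip along the new boundary of $S_\al$; this would place $t_{c_\al}$ itself in $G_2$ and make the cross-product chain $(h_1,\ldots,h_{k-1})\times t_{c_\al}$ a chain in $C_k(G_2)$ outright, bypassing the abelianization argument altogether.
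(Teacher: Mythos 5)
Your first step—applying Lemma~\ref{relative} to the square of Proposition~\ref{I3} and reducing the corollary to the vanishing of $\_\times t_{c_\al}\colon H_{*-1}(St_{\OO^2}(\s_0))\to H_*(\Ga(S_\al),\Ga(S))$—is exactly the paper's reduction, and your general strategy of trading $t_{c_\al}$ for an element of $\al(\Ga(S))$ that still centralizes the stabilizer is also in the spirit of what the paper does: it produces the twist $t_{c'_\al}$ along a boundary component of $S$ pushed into the interior, and a diffeomorphism $\bar g$, supported in a neighborhood $N$ of $(S_\al\minus S)\cup\del S\cup a_0$ and hence commuting with $St_{\OO^2}(\s_0)$, with $\bar g^{-1}t_{c_\al}\bar g=t_{c'_\al}$; a short chain computation then kills the class. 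But you leave precisely this construction of $g$ open, and both routes you sketch for it fail. The isotopy route is simply false: $c_\al=[a_0*a_1]$ crosses the attached strip once, so it has geometric intersection number $1$ with the co-core arc of the strip, which is an isotopy invariant; hence $c_\al$ is not isotopic into $S$ and $t_{c_\al}\notin\al(\Ga(S))$. (If it were, conjugation by an element of $\al(\Ga(S))$ would force $St_{\OO^1}(\al(\s_0))=\al(\Ga(S))$, which is not the case.)

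The abelianization route does not work either. Knowing $H_1(\Ga(S_\al))=0$ in high genus only tells you $t_{c_\al}$ is a product of commutators in $\Ga(S_\al)$; to conclude $h\times t_{c_\al}=h\times g$ via K\"unneth you need $t_{c_\al}$ and $g$ to agree in $H_1$ of (a subgroup of) the centralizer of $H=\al(St_{\OO^2}(\s_0))$, since the cross product only factors through subgroups commuting with $H$. That centralizer is essentially the mapping class group of the fixed neighborhood $N$ above, a surface of small genus with several boundary components whose abelianization is far from trivial—and it does not grow with the genus of $S$, so no ``sufficiently high genus'' hypothesis helps; moreover the stability induction needs the statement for all $g$, including small $g$. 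The missing ingredient is the geometric observation the paper supplies: both $c_\al$ and the boundary-parallel curve $c'_\al$ are non-separating in $N$, so some mapping class supported in $N$ (hence centralizing $H$) conjugates $t_{c_\al}$ to $t_{c'_\al}\in\al(\Ga(S))$, which is exactly the element $g$ your argument requires. Without it, your proof is incomplete at its crucial step.
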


\begin{proof}
Applying Lemma \ref{relative} to Proposition~\ref{I3}, it is enough to show
that the map  $$\_\x t_{c_\al}: H_{*-1}(St_{\OO^2}(\s_0))\rar
H_*(\Ga(S_\al),\Ga(S))$$
is the zero map. Recall that $t_{c_\al}$ is a Dehn twist along the curve
$c_\al$ of Figure~\ref{curves}~(a). Let $\s_0=\lgl a_0\rgl$. 
As can be seen in the figure, the curve is 
non-separating in a neighborhood of
$S_\al\minus S\cup \del S\cup a_0$.
Let $c_\al'$ be one of the components of $\del S$ appearing in
Figure~\ref{curves}~(a) pushed to the interior of
$S_\al$. Note that $c'_\al$ is also non-separating in the neighborhood.   
Hence the complements of the two curves are diffeomorphic and there exists a diffeomorphism $g$ of the neighborhood fixing its boundaries taking
$c_\al$ to $c'_\al$. Let $\bar g\in \Ga(S_\al)$ be the class of $g$ extended by the identity 
to the whole of $S_\al$. Then $\bar g$ commutes with
the image of $St_{\OO^2}(\s_0)$ in $\Ga(S_\al)$. 

Given $[c]\in H_{k-1}(St_{\OO^2}(\s_0))$, the chain
$c\x t_{c_\al}\x \bar g\in C_{k+1}(\Ga(S_\al))$ has boundary
$d(c\x t_{c_\al}\x \bar g)$

\begin{tabular}{ll}
&$=(-1)^{k+1}c\x t_{c_\al}+d(c\x t_{c_\al})\x \bar g+(-1)^{k}\,\bar g^{-1}(c\x t_{c_\al})\,\bar g$\\
&$=(-1)^{k+1}c\x t_{c_\al}+ (-1)^{k} c\x \bar g+0+ (-1)^{k+1}\,t_{c_\al}^{-1}c\,t_{c_\al}\x \bar g+(-1)^{k} c\x t_{c'_\al}$.
\end{tabular} 
As $t_{c_\al}^{-1}c\, t_{c_\al}=c$ the middle terms cancel and the result
follows from the fact that  $c\x t_{c'_\al}\in \Ga(S)$.
\end{proof}

For the map $\be$, we have a similar situation with $c_\be$ is a curve
in a neighborhood $N$
of $S_\be\minus S\cup \del S\cup a_0$ (see
Figures~\ref{curves}~(b) and \ref{curveb}), but now  
$c_\be$ is separating. 
\begin{figure}[ht]
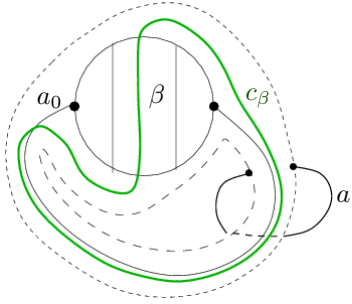

\begin{lpic}{curveb(0.42,0.42)}
 \lbl[b]{48,60;$\be$}
 \lbl[b]{14,60;$a_0$}
 \lbl[b]{80,60;\textcolor{olgreen}{$c_\be$}}
 \lbl[b]{107,30;$a$}
\end{lpic}
\caption{The arc $a$}\label{curveb}
\end{figure}
Consider an arc $a$ in
$S_\be$ as in Figure~\ref{curveb}, joining the two dashed boundary components of $N$  and
otherwise disjoint from it. (Such an arc exists as $a_0$ is non-separating.) Then
$c_\be$ is non-separating in $N\cup a$.

\begin{cor}\label{I3corb}
Let $\s_0$ be a vertex of $\OO^1(S)$. 
Then the composition 
$$ H_{*-1}(\Ga(S'))\rar H_{*-1}(St_{\OO^1}(\s_0))\xrightarrow{\_\x t_{c_\be}} H_*(\Ga(S_\be),\Ga(S))$$
is the zero map, where $S'$ is the complement in $S_\be$ of a neighborhood of
$S_\be\minus S\cup \del S\cup a_0\cup a$, with $a$ as above, and $\Ga(S')\to
St_{\OO^1}(\s_0)$ is the inclusion. 
 \end{cor}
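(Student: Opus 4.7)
The plan is to follow the template of Corollary~\ref{I3cora}, adjusting for the fact that $c_\be$, unlike $c_\al$, is separating in the smaller neighborhood $N$; this is exactly what forces the arc $a$ (and hence the restriction to $\Ga(S')$) into the statement.

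First I would construct the auxiliary diffeomorphism. Let $N''$ be the neighborhood of $S_\be \minus S \cup \del S \cup a_0 \cup a$ whose complement in $S_\be$ is $S'$. Choose $c'_\be$ to be a component of $\del S$ appearing in Figure~\ref{curves}(b) pushed slightly into the interior of $S_\be$; as in Corollary~\ref{I3cora}, $c'_\be$ is non-separating in $N''$, and by hypothesis $c_\be$ is non-separating in $N \cup a \subset N''$. Since both curves then have connected complements in $N''$ of the same topological type, a change-of-coordinates argument yields a diffeomorphism $g$ of $N''$ fixing $\del N''$ with $g(c'_\be) = c_\be$; extending by the identity produces $\bar g \in \Ga(S_\be)$. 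Because $\bar g$ and $t_{c_\be}$ are both supported in $N''$, and elements of $\Ga(S') \inc St_{\OO^1}(\s_0) \inc \Ga(S_\be)$ may be represented by diffeomorphisms supported in $S'$, both $\bar g$ and $t_{c_\be}$ commute with the image of $\Ga(S')$ in $\Ga(S_\be)$. Moreover $\bar g^{-1} t_{c_\be} \bar g = t_{c'_\be}$, and $t_{c'_\be} \in \be(\Ga(S))$ since $c'_\be \subset S$.

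Next I would run the chain-level computation, which is formally identical to that of Corollary~\ref{I3cora}. For a $(k-1)$-cycle $c \in C_{k-1}(\Ga(S'))$, considered in $C_{k-1}(\Ga(S_\be))$ via the inclusion, form the $(k+1)$-chain $c \x t_{c_\be} \x \bar g$. Using $dc = 0$, $t_{c_\be}^{-1} c\, t_{c_\be} = c$, and $\bar g^{-1} c\, \bar g = c$, the expansion
\[
d(c \x t_{c_\be} \x \bar g) = (-1)^{k+1} c \x t_{c_\be} + d(c \x t_{c_\be}) \x \bar g + (-1)^k \bar g^{-1}(c \x t_{c_\be}) \bar g
\]
simplifies (the two middle sub-terms cancel, and $\bar g^{-1}(c \x t_{c_\be}) \bar g = c \x t_{c'_\be}$) to
\[
d(c \x t_{c_\be} \x \bar g) = (-1)^{k+1} c \x t_{c_\be} + (-1)^k c \x t_{c'_\be}.
\]
Since $c \x t_{c'_\be} \in C_k(\be(\Ga(S)))$, this shows $[c \x t_{c_\be}] = 0$ in $H_*(\Ga(S_\be),\Ga(S))$, as desired.

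The only nontrivial point is the construction of $\bar g$: whereas for $\al$ the curve $c_\al$ is already non-separating in $N$, here $c_\be$ becomes non-separating only once the arc $a$ is adjoined, which is precisely why the corollary restricts to $\Ga(S')$ rather than to all of $St_{\OO^1}(\s_0)$. Once $\bar g$ exists, the remainder of the argument is a direct mirror of Corollary~\ref{I3cora}.
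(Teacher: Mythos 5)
Your proposal is correct and is essentially the paper's own argument: the paper disposes of this corollary in one line by saying the proof is the same as that of Corollary~\ref{I3cora}, using that $c_\be$ and the pushed-in boundary component are non-separating in $S_\be\minus S'$, which is exactly the construction of $\bar g$ and the chain computation you carry out. Your identification of the role of the arc $a$ (making $c_\be$ non-separating, at the cost of restricting to $\Ga(S')$) matches the paper's intent precisely.
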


The proof is the same as for Corollary \ref{I3cora} using the fact that $c_\be$ and the component of $\del
S$ in the figure are
non-separating in $S_\be\minus S'$.

\medskip

The last (but not least) ingredient is the connectivity of the complex. The proof is deferred to Section~\ref{connectivity} as 
it does require some work...

\begin{prop}[Ingredient 4]\label{I4}
The complex $\OO^i(S_{g,r})$ is $(g-2)$--connected.
\end{prop}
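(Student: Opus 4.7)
The plan is to induct on the genus $g$. The base cases are immediate: the statement is vacuous for $g=0$, and for $g=1$ one checks $\OO^i(S_{1,r})\neq\emp$ by exhibiting a single non-separating arc between $b_0$ and $b_1$. For the inductive step, I would compare $\OO^i(S)$ with two larger arc complexes on $S=S_{g,r}$: $\NN(S,b_0,b_1)$, the complex of non-separating arc systems (dropping the ordering condition), and $\AM(S,b_0,b_1)$, the full arc complex (dropping also the non-separating condition), with inclusions $\OO^i(S)\subseteq \NN(S,b_0,b_1)\subseteq \AM(S,b_0,b_1)$.

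The complex $\AM(S,b_0,b_1)$ is contractible, by the classical surgery/coning argument of Hatcher. Given a simplicial map $f\colon S^k\to \OO^i(S)$ with $k\le g-2$, the first step is to extend $f$ to a PL map $F\colon D^{k+1}\to \AM(S,b_0,b_1)$ using this contractibility, then to perform two rounds of \emph{bad simplex} modifications pushing the image back into $\OO^i(S)$. In the first round, a simplex of $F$ is bad if some sub-collection of its arcs is separating; for such a bad simplex $\s$, cutting $S$ along a separating sub-system of $\s$ produces pieces $S_1,\dots,S_\ell$ whose genera sum to at least $g-\dim\s$ by the Euler characteristic calculation used in the proof of Proposition~\ref{I1}, and the link of $\s$ in the subcomplex of ``less-bad'' simplices is identified with a join of arc complexes on these pieces, highly connected by induction. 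This lets one fill $\s$ and thus eliminate separating arcs from the image of $F$. A second round, now inside $\NN(S,b_0,b_1)$, removes simplices violating the ordering condition: here the surgery is the exchange of an inverted consecutive pair of arcs at $b_0$ (or at $b_1$), which preserves non-separation and strictly decreases the total number of inversions.

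The principal obstacle is the link connectivity estimate in the first round. After cutting along a separating arc subsystem of dimension $q$, the genera of the resulting pieces are only constrained by $g_1+\cdots+g_\ell\ge g-q$, and one must show that the join of the inductively $(g_j-2)$-connected arc complexes on the pieces has enough connectivity to fill $k$-spheres with $k\le g-2$. Tracking the Euler-characteristic and connectivity bookkeeping sharply enough to obtain the optimal $(g-2)$-bound (rather than, say, a $\tfrac{g}{2}$- or $\tfrac{g}{3}$-bound) is precisely where the successive refinements of \cite{Har85,Hat91,Iva89,RW09,Wahl08} improve one another, and the sharp bound here should be recovered by following the refinement of \cite{RW09,Wahl08}.
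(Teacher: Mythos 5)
Your high-level architecture (contractible ambient arc complex, then bad-simplex/link-filling descents, then a final step handling the ordering condition) is the right one, and your last step is in the same spirit as the paper's, but there is a genuine gap in the middle, and it sits exactly at the hardest point. Your induction is set up only on the genus, for complexes with the two fixed marked points $b_0,b_1$. However, in your ``first round'' the link of a bad simplex is a join of arc complexes on the cut pieces, and the cut pieces do \emph{not} carry just two marked points: cutting along arcs based at $b_0,b_1$ produces several copies of $b_0$ and of $b_1$ on the boundaries of the pieces. So the complexes appearing in the links are of the form $\BB_0(X_i,\De_0^i,\De_1^i)$ (and full arc complexes $\A(Y_j,\Ga_j)$) with sets of marked points of arbitrary size, and your inductive hypothesis simply does not apply to them. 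This is why the paper proves the intermediate results (Theorems~\ref{A}, \ref{D0D1}, \ref{BX}) for arbitrary finite sets $\De$, $\De_0,\De_1$, with a lexicographic induction on $(g,r,q)$ and bookkeeping in terms of boundary components and pure/impure edges; a genus-sum estimate of the form $\sum g_i\ge g-\dim\s$ is not enough to make the join-connectivity count (Proposition~\ref{joinconn}) close up, since boundary and marked-point counts enter the estimate on equal footing with genus.

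Relatedly, your passage from the full arc complex directly to the non-separating complex of arcs from $b_0$ to $b_1$ conflates two reductions that must be handled separately: eliminating \emph{pure} arcs (both endpoints at $b_0$ or both at $b_1$) and eliminating separating systems. The first of these is Theorem~\ref{D0D1}, the most delicate step of the whole argument: the naive Hatcher surgery fails there because surgery can create trivial or pure arcs (this is precisely where Harer's original argument had a gap), and your proposal defers exactly this point to the literature rather than proving it. Finally, your ``second round'' is not a valid simplicial modification as stated: one cannot simply ``exchange an inverted consecutive pair of arcs'' inside a given map of a disc; the paper instead removes \emph{purely bad} simplices by filling their links, using the key geometric estimate that the complement of a purely bad $p$--simplex has genus at least $g-p$ together with the genus induction applied to $\OO$ of the cut surface (Theorem~\ref{OS}). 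With your framework one would still need to supply such a filling argument, and the inversion-counting surgery does not provide it.
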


\section{Spectral sequence argument}\label{SSsect}

This section gives the proof of the main theorem, Theorem~\ref{main}. The proof is a double spectral sequence argument build on the
action of the mapping class group of a surface $S$ on the complexes $\OO^1(S)$ and $\OO^2(S)$. It relies on the geometric results
proved in Section~\ref{ordsect}, and on the connectivity results of Section~\ref{connectivity}.  
We follow the line of argument of \cite{RW09}. 

\medskip

To a simplicial complex $X$, we associate the
augmented chain complex $(\tC_*(X),\del)$ 
defined by $\tC_p(X)=\Z X_p$, the free module over the set of $p$--simplices of $X$, for $p\ge 0$,
and $\tC_{-1}(X)=\Z$. The differential $\del$  induced by the face maps and the augmentation: 
$\del_p=\sum_{i=0}^p(-1)^id_i$ for $p\ge 1$ and $\del_0$ maps the vertices of $X$ to the generator of $\tC_{-1}(X)$.

Let $X,Y$ be simplicial complexes with simplicial actions of groups $G$ and $H$ respectively. 
A homomorphism $\phi\colon G\to H$ together with an equivariant map $f\colon X\to Y$ (with respect to that homomorphism) induces a map of double
chain complexes $$F\colon \tC_*(X)\ot_GE_*G \rar \tC_*(Y)\ot_H E_*H$$
where $(E_*G,d)$ (resp.~$(E_*H,d)$) is a free $G$-- (resp.~$H$--)resolution of $\Z$. 
Now consider the double complex (mapping cone in the $q$-direction)
$$C_{p,q}=(\tC_p(X)\ot_GE_{q-1}G)\oplus(\tC_p(y)\ot_H E_qH)$$
with horizontal differential taking $(a\ot b,a'\ot b')$ to $(\del a\ot b,\del a'\ot b')$ and vertical differential taking 
$(a\ot b,a'\ot b')$ to $(a\ot db,a'\ot db' + F(a\ot b))$. 

The horizontal and vertical filtrations of such a double complex give two spectral sequences, both converging to the homology of the
total complex. 
We will use the following two properties of these spectral sequences: 
\begin{enumerate}[(SS1)]
\item If $X$ is $(c-1)$--connected and $Y$ is $c$--connected, then the $E^1$--term of the horizontal  spectral sequence, 
which is the homology of $C_{p,q}$
  with respect to the horizontal differential, is 0 in the range $p+q\le c$ (noting that $\tC_p(X)$ only contributes to $C_{p,q}$ when
  $q>0$).  
Hence the other spectral sequence converges to 0 in the range $p+q\le c$. 

\item The $E^1$--term of the vertical spectral sequence is the relative homology group 
$E^1_{p,q}=H_q\big(\tC_p(Y)\ot_HE_*H,\tC_p(X)\ot_GE_*G\big)$ as the columns of $C_{p,q}$ are the mapping cones of the map $F$ (with $p$
fixed). 
If the actions of $G$ and $H$ are transitive on $X$ and $Y$, 
a relative version of Shapiro's lemma  identifies this homology group with 
$$E^1_{p,q}=H_q(St_Y(\s_p),St_X(\s_p))$$ 
where $St_X(\s_p)$ and $St_Y(\s_p)$ are the stabilizers in $X$ and $Y$ of some $p$--simplex $\s_p$ of $X$ and its image in $Y$. Note
that this formulation also includes the case $p=-1$ with $St_X(\s_{-1})=G$ and $St_Y(\s_{-1})=H$ as the action is trivial on the
``$(-1)$--simplex''. 
\end{enumerate}

\medskip

Recall from the previous section the maps 
$$\al_g\colon \Ga(S_{g,r+1})\to \Ga(S_{g+1,r}) \ \ \textrm{and}\ \  \be_g\colon \Ga(S_{g,r})\to \Ga(S_{g,r+1}).$$

Denote by $H(\al_g)$ the relative homology group $H(\Ga_{g+1,r},\Ga_{g,r+1};\Z)$ corresponding to the map $\al_g$, and 
$H(\beta_g)$ the relative homology group $H(\Ga_{g,r+1},\Ga_{g,r};\Z)$ corresponding to $\be_g$. 
The main theorem considered in this paper (Theorem~\ref{main}) can be restated as follows:  

\begin{thm}
$(1)\ H_i(\al_g)=0$ for $i\le\frac{2g+1}{3}\ \ $ and  $\ \ (2) \ H_i(\beta_g)=0$ for $i\le \frac{2g}{3}$. 
\end{thm}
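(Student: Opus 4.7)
The plan is to prove (1) and (2) simultaneously by induction on $g$, with base cases $g \in \{0, 1\}$ being immediate: the bounds $i \le \frac{2g+1}{3}$ and $i \le \frac{2g}{3}$ reduce to $i = 0$ in these cases, and $H_0$ of any homomorphism of groups vanishes.

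For the inductive step, I apply the double complex machinery of this section twice. For (1), take the equivariant map $\al\colon \OO^2(S_{g,r+1}) \to \OO^1(S_{g+1,r})$ covering $\al_g$: by Proposition~\ref{I4} combined with (SS1) the total complex is acyclic in degrees $\le g - 1$, while Propositions~\ref{I1} and~\ref{I2} combined with (SS2) identify the vertical $E^1$-page as
\[
E^1_{-1, q} = H_q(\al_g), \qquad E^1_{p, q} = H_q(\be_{g-p}) \text{ for } p \ge 0.
\]
For (2), take $\be\colon \OO^1(S_{g,r}) \to \OO^2(S_{g,r+1})$ covering $\be_g$: both complexes are only $(g-2)$-connected, so the total complex is acyclic in degrees $\le g - 2$, with $E^1_{-1, q} = H_q(\be_g)$ and $E^1_{p, q} = H_q(\al_{g-p-1})$ for $p \ge 0$.

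The key step is to show that every incoming differential $d^r \colon E^r_{r-1, i-r+1} \to E^r_{-1, i}$ vanishes in the required range of $i$. For $r \ge 2$, the source is a subquotient of an $E^1$-entry which the inductive hypothesis at strictly smaller genus forces to zero: for (1), $H_{i-p}(\be_{g-p}) = 0$ as soon as $p \ge 3i - 2g$, a condition satisfied by every $p \ge 1$ when $i \le \frac{2g+1}{3}$; the analogous arithmetic for (2) produces the bound $i \le \frac{2g}{3}$. The delicate case is $d^1$, because $E^1_{0,i}$ lies just outside the range already controlled by induction. For (1), Corollary~\ref{I3cora} says $d^1$ is identically zero. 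For (2), Lemma~\ref{relative} factors $d^1\colon H_i(\al_{g-1}) \to H_i(\be_g)$ through $H_{i-1}(\Ga(S_{g-1,r+1}))$ as the connecting map followed by $\pm(\_\x t_{c_\be})$, and Corollary~\ref{I3corb} says this second arrow vanishes on the image of $H_{i-1}(\Ga(S'))$ for the subsurface $S' \subset S_{g-1, r+1}$ from the figure.

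The main obstacle is completing the $d^1 = 0$ argument for (2): one must verify that $\Ga(S') \hookrightarrow \Ga(S_{g-1,r+1})$ is surjective on $H_{i-1}$ in the required range, so that the image of the connecting map actually lies in $H_{i-1}(\Ga(S'))$. Inspecting Figure~\ref{curveb} shows that $S'$ has genus $g - 2$ and that the inclusion factors as a chain of $\al$- and $\be$-type stabilizations whose final step is an $\al$-stabilization from genus $g - 2$ to $g - 1$; part~(1) of the inductive hypothesis at genus $g - 2$ then provides surjectivity for $i - 1 \le \frac{2(g-2)+1}{3}$, which matches the desired bound $i \le \frac{2g}{3}$ exactly. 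Once every differential into the $p = -1$ column has been shown to vanish, $E^\infty_{-1, i} = E^1_{-1, i}$, and the latter is zero by the total-complex vanishing range, completing the induction.
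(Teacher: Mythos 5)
Your overall architecture is exactly that of the paper: a simultaneous induction on $g$, the two double-complex spectral sequences for $\al\colon\OO^2(S_{g,r+1})\to\OO^1(S_{g+1,r})$ and $\be\colon\OO^1(S_{g,r})\to\OO^2(S_{g,r+1})$, the identifications of the $E^1$-columns via Propositions~\ref{I1} and~\ref{I2}, the vanishing range from Proposition~\ref{I4} with (SS1), the higher differentials killed by the inductive hypothesis, and the $d^1$ differentials handled by Corollary~\ref{I3cora} in case (1) and by Lemma~\ref{relative} plus Corollary~\ref{I3corb} in case (2). Up to the last step, the argument and the arithmetic agree with the paper.

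The genuine problem is your justification of the surjectivity of $H_{i-1}(\Ga(S'))\to H_{i-1}(St_{\OO^1}(\s_0))$, which is the crux of the $d^1=0$ argument in case (2). The surface $S'$ of Corollary~\ref{I3corb} is the complement in $S_\be=S_{g,r+1}$ of a neighborhood of the strip, of $\del S$, of $a_0$ and of the arc $a$; since $a$ joins the \emph{two distinct} boundary circles created by cutting along $a_0$, one gets $S'\cong S_{g-1,r}$ (genus $g-1$, not $g-2$), and the inclusion $S'\subset S\minus a_0\cong S_{g-1,r+1}$ is a \emph{single} $\be$-type stabilization. Surjectivity in degree $i-1$ then follows from $(2_{g-1})$, since $i-1\le\frac{2g}{3}-1\le\frac{2(g-1)}{3}$. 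Your version, a chain of stabilizations through genus $g-2$ ending in an $\al$-map, is both geometrically incorrect and logically incomplete: surjectivity of a composite requires control of every map in the chain, not just the last one, and if the chain really contained a $\be$-step at genus $g-2$ the inductive bound would demand $i-1\le\frac{2(g-2)}{3}$, which fails precisely in the critical degree $i=\frac{2g}{3}$ (when this is an integer) -- the only degree where the $d^1$ argument is actually needed. So the statement you want is true, and provable from hypotheses you have available, but the reason you give would not close the induction; the correct identification of $S'$ is what makes the bound come out. A smaller slip: $(1)$ at $g=1$ is not a trivial base case, since the bound there is $i\le 1$, not $i=0$; it must be (and in your setup can be) obtained from the inductive step, as in the paper, using $(2_0)$ and $(2_1)$.
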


\begin{proof}
We prove the theorem by induction on $g$. To start the induction, note that statements $(1)$ for genus 0 and $(2)$ for genus 0,1 
are trivially true as
they are just concerned with $H_0$. Let $(1_g)$ and $(2_g)$ denote the truth of (1) and (2) in the theorem for genus $g$. 
The induction will go in two steps: 
\begin{enumerate}[\hspace{8mm}Step 1:]
%{\em Step 1:} 
%{\em Step 2:} 
\item For $g\ge 1$, $(2_{\le g})$ implies $(1_g)$.
\item For $g\ge 2$, $(1_{<g})$ and $(2_{g-1})$ imply $(2_g)$.
\end{enumerate}

\vs

For Step 1, we consider the spectral sequence described above for the actions of $G=\Ga_{g,r+1}$ on $X=\OO^2(S_{g,r+1})$ and of 
$H=\Ga_{g+1,r}$ on $Y=\OO^1(S_{g+1,r})$ with the homomorphism $\phi\colon G\to H$ and the map $f\colon X\to Y$ both induced by the map
$\al\colon S_{g,r+1}\to S_{g+1,r}$ of Figure~\ref{ab}.  
As the action is transitive in both cases (Propositions~\ref{I1}), we can apply (SS2) from above 
which says that the vertical spectral sequence has the form 
$E^1_{p,q}=H_q(St_Y(\s_p),St_X(\s_p))$. 
When $p=-1$, we have  
$$E^1_{-1,q}=H_q(\Ga_{g+1,r},\Ga_{g,r+1})=H_q(\al_g)$$ which are the groups we are interested in. 
By Propositions~\ref{I2}, the other groups are identified with 
$$E_{p,q}^1=H_q(\be_{g-p}) \ \ \ \ \textrm{for}\ \  p\ge 0.$$
Hence we will be able to apply induction to these terms of the spectral sequence. We want  
to deduce that $E^1_{-1,q}=0$  for $q\le \frac{2g+1}{3}$. 
This will follow from the following three claims: 

\smallskip
\no {\em Claim 1:} 
$E^\infty_{-1,q}=0$ for $q\le \frac{2g+1}{3}$. 

\smallskip 
\no {\em Claim 2:} 
The $E^1$--term is as in Figure \ref{SS1}, i.e. there are no possible sources of differentials to kill classes in
$E^1_{-1,q}$ with $q\le \frac{2g+1}{3}$, except possibly for $d^1\colon E^1_{0,q}\to E^1_{-1,q}$ when $q=\frac{2g+1}{3}$
(i.e.~when the fraction is an integer). 

\smallskip
\no {\em Claim 3:} 
The map $d^1\colon E^1_{0,q}\to E^1_{-1,q}$ is the 0-map.

\begin{figure}[ht]
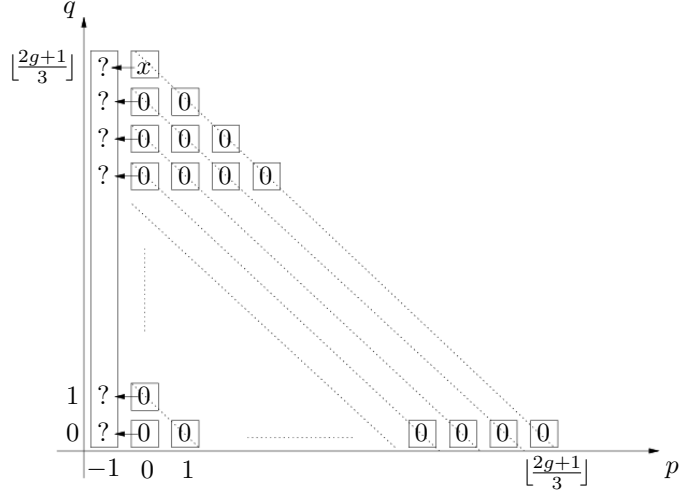

\begin{lpic}{SS.2(0.45,0.45)}
 \lbl[b]{-4,117;$\lfloor\!\frac{2g+1}{3}\!\rfloor$} 
 \lbl[b]{4,136;$q$}
 \lbl[b]{5,22;$1$}
 \lbl[b]{5,11;$0$}
 \lbl[b]{148,-3;$\lfloor\!\frac{2g+1}{3}\!\rfloor$} 
 \lbl[b]{182,0;$p$}
 \lbl[b]{39,0;$1$}
 \lbl[b]{27,0;$0$}
 \lbl[b]{14,0;$-1$}
 \lbl[b]{26,11;$0$}
 \lbl[b]{26,22;$0$}
 \lbl[b]{38,11;$0$}
 \lbl[b]{108,11;$0$}
 \lbl[b]{120,11;$0$}
 \lbl[b]{132,11;$0$}
 \lbl[b]{144,11;$0$}
 \lbl[b]{26,87;$0$}
 \lbl[b]{38,87;$0$}
 \lbl[b]{50,87;$0$}
 \lbl[b]{62,87;$0$}
 \lbl[b]{26,98;$0$}
 \lbl[b]{38,98;$0$}
 \lbl[b]{50,98;$0$}
 \lbl[b]{26,109;$0$}
 \lbl[b]{38,109;$0$}
 \lbl[b]{26,119.5;$x$}
 \lbl[b]{14,119;?}
 \lbl[b]{14,109;?}
 \lbl[b]{14,98;?}
 \lbl[b]{14,87;?}
 \lbl[b]{14,22;?}
 \lbl[b]{14,11;?}
\end{lpic}
\caption{Spectral sequence for Step 1. 
The possible sources of differentials for the ``?'' are along the dotted diagonals.}\label{SS1}
\end{figure}

\smallskip

Claims 1 and 2 imply immediately that $E^1_{-1,q}=0$ for $q<\frac{2g+1}{3}$ as ``it must die by $E^\infty$'' (Claim 1) 
and ``nobody can kill
it'' (Claim 2). Claim 3 gives that this also holds when $q=\frac{2g+1}{3}$ as the only differential with a
possibly non-trivial source is the zero map, and hence won't kill anything in the target.

By Proposition~\ref{I4}, $X$ is $(g-2)$--connected and $Y$ is $(g-1)$--connected. 
Applying (SS1) from above, we get that $E^\infty_{p,q}=0$ for 
$p+q\le g-1$. In particular, $E^\infty_{-1,q}=0$ for $q\le g$. As $\frac{2g+1}{3}\le g$ when $g\ge 1$, Claim 1
follows. 
 
The sources of differentials to $E_{-1,q}^1$ are the terms $E^{p+1}_{p,q-p}$ for $p\ge 0$. As
$E^1_{p,q}=H_{q}(\be_{g-p})$ when $p\ge 0$, by
induction we know that $E^1_{p,q}=0$ when $q\le \frac{2(g-p)}{3}=\frac{2g-2p}{3}$ and $p\ge 0$. 
Hence $E^1_{p,q-p}=0$ for $q\le \frac{2g+p}{3}$ and $p\ge 0$, i.e.~they are all 0 for any $p\ge 0$ if $q\le \frac{2g}{3}$ 
or for $p\ge 1$ if $q=\frac{2g+1}{3}$. This is Claim 2. 

Claim 3 is given by Corollary~\ref{I3cora}: The map $d^1\colon E^1_{0,q}\to E^1_{-1,q}$ is the map 
$H_q(St_{\OO^1}(\al(\s_0)),St_{\OO^2}(\s_0))\to H_q(\Ga(S_\al),\Ga(S))$ 
of the corollary, where $S=S_{g,r+1}$ and $S_\al=S_{g+1,r}$.

\medskip 

For Step 2, the argument is essentially the same. 
We consider the spectral sequence described above for the actions of $G=\Ga_{g,r}$ on $X=\OO^1(S_{g,r})$ and of 
$H=\Ga_{g,r+1}$ on $Y=\OO^2(S_{g,r+1})$ with the homomorphism $\phi\colon G\to H$ and the map $f\colon X\to Y$ both induced by the map
$\beta\colon S_{g,r}\to S_{g,r+1}$.  
We can again apply (SS2) by Proposition~\ref{I1} and get that 
 that the vertical spectral sequence has the form 
$E^1_{p,q}=H_q(St_Y(\s_p),St_X(\s_p))$. 
When $p=-1$, we have  
$$E^1_{-1,q}=H_q(\Ga_{g,r+1},\Ga_{g,r})=H_q(\beta_g)$$ which are the groups we are interested in. 
By Propositions~\ref{I2}, the other groups are identified with 
$$E_{p,q}^1=H_q(\al_{g-p-1}) \ \ \ \ \textrm{for}\ \  p\ge 0$$
Hence we will be able to apply induction to these terms, 
to deduce that $E^1_{-1,q}=0$  for $q\le \frac{2g}{3}$. 
As in the previous case, this follows from three claims: 

\smallskip
\no {\em Claim 1:} 
$E^\infty_{-1,q}=0$ for $q\le \frac{2g}{3}$. 

\smallskip 
\no {\em Claim 2:} 
The $E^1$--term is as in Figure \ref{SS1}, though with $\lfloor \frac{2g+1}{3}\rfloor$ replace by
$\lfloor \frac{2g}{3}\rfloor$, i.e. there are no possible sources of differentials to kill classes in
$E^1_{-1,q}$ with $q\le \frac{2g}{3}$, except possibly for $d^1\colon E^1_{0,q}\to E^1_{-1,q}$ when $q=\frac{2g}{3}$. 

\smallskip
\no {\em Claim 3:} 
The map $d^1\colon E^1_{0,q}\to E^1_{-1,q}$ is the 0-map.

By Proposition~\ref{I4}, $X$ and $Y$ are $(g-2)$--connected. Applying (SS1), we get that $E^\infty_{p,q}=0$ for 
$p+q\le g-2$. In particular, $E^\infty_{-1,q}=0$ for $q\le g-1$. As $\lfloor\frac{2g}{3}\rfloor\le g-1$ when $g\ge 2$, Claim 1
follows. 
 
The sources of differentials to $E_{-1,q}^1$ are the terms $E^{p+1}_{p,q-p}$ for $p\ge 0$. As
$E^1_{p,q}=H_{q}(\al_{g-p-1})$ when $p\ge 0$, by
induction we know that $E^1_{p,q}=0$ when $q\le \frac{2(g-p-1)+1}{3}=\frac{2g-2p-1}{3}$ and $p\ge 0$. 
Hence $E^1_{p,q-p}=0$ for $q\le \frac{2g+p-1}{3}$ and $p\ge 0$, i.e.~they are all 0 for any $p\ge 0$ if $q\le \frac{2g-1}{3}$ 
or for $p\ge 1$ if $q=\frac{2g}{3}$. This is Claim 2. 

Claim 3 is a consequence of Corollary~\ref{I3corb},
 using induction: The map $d^1\colon E^1_{0,q}\to E^1_{-1,q}$ is the map 
$H_q(St_{\OO^2}(\be(\s_0)),St_{\OO^1}(\s_0))\to
H_q(\Ga(S_\be),\Ga(S))$ mapping the top row to the
bottom row of the second square in Proposition~\ref{I3}, where $S=S_{g,r}$ and
$S_\be=S_{g,r+1}$ here. Applying Lemma ~\ref{relative} to the proposition,
we get that this map 
factors through the map 
$$\_ \times t_{c_\be} \colon H_{q-1}(St_{\OO^1}(\s_0))\rar
H_q(\Ga(S_\be),\Ga(S))$$
with $t_{c_\be}$ a Dehn twist along the curve $c_\be$ of Figure~\ref{curves}~(b). 
Let $S'$ be as in Corollary~\ref{I3corb}. By the corollary, the
composition  
$$H_{q-1}(\Ga(S'))\rar 
H_{q-1}(St_{\OO^1}(\s_0))\xrightarrow{\_ \times t_{c_\be}} H_q(\Ga(S_\be),\Ga(S))$$
is the zero map. Note now that the first map is a $\be$-map
$$ H_{q-1}(\Ga(S'))\cong H_{q-1}(\Ga_{g-1,r})\ \rar\ H_{q-1}(\Ga_{g-1,r+1})
\cong H_{q-1}(St_{\OO^1}(\s_0)).$$
By induction, it is surjective: we have
$H_{q-1}(\be_{g-1}) = H_{q-1}(\Ga_{g-1,r+1},\Ga_{g-1,r})=0$ by $(1_{g-1})$ as $q-1\le
\frac{2g}{3}-1\le\frac{2(g-1)}{3}$.
As the composition above is 0, we can deduce that the second map is
also 0 and hence that the differential is 0.  
\end{proof}

Note that the slope $\frac{2}{3}$ in the bound of the stable range is
determined by the structure of the spectral sequence: to prove that 
$H_q(\al_g)=0$ requires that $H_{q-1}(\beta_{g-1})=0$ (for claim 2 of step 1),
which in turn requires that $H_{q-2}(\al_{g-3})=0$ (for claim 2 of step 2).   
Claim 1 does not interfere with the slope because the connectivity
bounds of the complexes used have a higher
slope, and Claim 3 (in Step 2) only requires the slope to be at most 1.
The constant
coefficient is decided by the connectivity of the complexes
for low genus surfaces. Recall though from the introduction that we know the slope to be best
possible, and the constant coefficient to be very close to best possible.

\section{Connectivity argument}\label{connectivity}

In this section, we give the proof of Proposition~\ref{I4} which gives the connectivity of the complexes $\OO^1(S)$ and $\OO^2(S)$ 
used in the previous section to prove the stability theorem.
The rough line of argument is to embed $\OO^i(S)$ in a larger complex which we can show is contractible, and work backwards from there,
deducing high connectivity of smaller and smaller complexes. 
We start by defining the relevant complexes.

\medskip

Let $\Delta\subset \del S$ be a non-empty 
set of points. We consider arcs in $S$ with boundary in $\Delta$. We say that an arc $a$ is
{\em trivial} if it separates $S$ into two components, one of which is a disc intersecting $\Delta$ only in the boundary
of $a$. 
Let $\A(S,\Delta)$ be the simplicial complex whose vertices are isotopy classes of non-trivial arcs in $S$ with boundary in $\Delta$. A
$p$--simplex of $\A(S,\Delta)$ is a collection of $p+1$ distinct isotopy classes of arcs $\lgl a_0,\dots,a_p\rgl$ representable by arcs 
with disjoint interiors. 

The second complex we consider is the complex of arcs between two sets of points. 
Given two disjoint sets of points $\Delta_0,\Delta_1\subset \del S$, define
$\BB(S,\De_0,\De_1)\subset \A(S,\De_0\cup\De_1)$ to be the subcomplex of arcs with one boundary point in $\De_0$ and one
in $\De_1$. 

Let $\BB_{0}(S,\De_0,\De_1)\subset \BB(S,\De_0,\De_1)$ be the subcomplex of non-separating collections, i.e. simplices
$\s=\lgl a_0,\dots,a_p\rgl$ such that
the complement of the arcs $a_0,\dots,a_p$ in $S$ is connected. (This is a subcomplex as the non-separating
property is preserved under taking faces.)

Finally, the complex $\OO(S,b_0,b_1)$ we are interested in is the
subcomplex of\\ 
 $\BB_{0}(S,\{b_0\},\{b_1\})$ of
simplices  $\lgl a_0,\dots,a_p\rgl$ such that the ordering of the arcs at $b_0$ is opposite to that at $b_1$. (See
Definition~\ref{OOdef}.)

\vs

We will prove that $\OO(S,b_0,b_1)$ is $(g-2)$--connected by first proving that $\A(S,\Delta)$ is contractible (in most
cases), and then slowly deducing connectivity bounds for each of the complexes in the sequence 
$$\A(S,\Delta) \sta{i_1}{\hookleftarrow} \BB(S,\Delta_0,\Delta_1)\sta{i_2}{\hookleftarrow}\BB_0(S,\De_0,\De_1)\sta{i_3}{\hookleftarrow}\OO(S,b_0,b_1).$$ 
In the end, we only need the case of $\Delta=\{b_0,b_1\}$, but the connectivity arguments will use an induction
requiring to know the connectivity of complexes with a larger number of points---this comes from the fact that,
cutting the surface along arcs between points of $\Delta$ produces several copies of the original points of $\Delta$ 
(see Figure~\ref{OO.cut}).

The connectivity arguments we will use are of three types: (1) {\em direct calculation} showing contractibility,
(2) exhibition of a complex as a {\em suspension} (or wedge of such)
of a ``previous'' complex, and (3) {\em inductive deduction} from the connectivity of a larger complex. 
The argument for the connectivity of $\A(S,\Delta)$ will be a mix of type (1) and (2), the deduction along $i_1$ in the sequence is the
most intricate argument and will be a mix of the three types of arguments, while deduction along $i_2$ and $i_3$ will be
purely (and simpler) type (3) arguments.

\smallskip

The arguments given in this section are collected from the papers \cite{Har85,Hat91,Iva89,RW09,Wahl08}. 
Theorem~\ref{A}, which gives the contractibility of the full arc complexes, was originally proved by Harer using the
theory of train tracks \cite[Sect.~2]{Har85}. We give here a much simpler proof, by surgering the arcs, 
due to Hatcher \cite{Hat91}. Theorem~\ref{D0D1}, giving the connectivity of the complex $\BB(S,\De_0,\De_1)$,
is also originally due to Harer \cite[Thm.~1.6]{Har85}, though there
is a gap in his proof, which is fixed in \cite[Thm.~2.3]{Wahl08}. We follow here the proof given in \cite{Wahl08} which uses a
mixture of Hatcher's surgery argument, and an careful inductive deduction which we learned from reading
Ivanov \cite{Iva89}. Deducing the connectivity of the non-separating subcomplex $\BB_0(S,\De_0,\De_1)$ (Theorem~\ref{BX}
below) is a more standard argument that goes back at least to Harer. Finally Theorem~\ref{OS} 
gives the
connectivity of the ordered complex $\OO(S,b_0,b_1)$. This was obtained by Ivanov \cite[Thm.~2.10]{Iva89} 
in the case where the two points are
on the same boundary component via a different sequence of complexes and where the ordering comes naturally from a
Morse-theoretic argument. The general case is given by Randal-Williams in \cite[Thm.~A.1]{RW09}, deducing it from Theorem~\ref{BX} 
via a combinatorial argument, similar to that of Theorem~\ref{BX}. 

\smallskip

We start by proving the contractibility of the full arc complex $\A(S,\De)$:

\begin{thm}\label{A}
$\A(S,\Delta)$ is contractible, unless $S$ is a disc or an annulus with $\Delta$ included in a single component of $\del S$, in which
case it is $(q+2r-7)$--connected, where $q=|\Delta|$ and $r=1,2$ is the number of boundary components of $S$.  
\end{thm}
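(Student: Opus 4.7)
The plan is to follow Hatcher's surgery argument, which the author cites. The non-exceptional case will be proved by showing that for a well-chosen basepoint arc $b_0\in\A(S,\Delta)$, the complex $\A(S,\Delta)$ deformation retracts onto the star of $b_0$, which is contractible. The exceptional case ($S$ a disc or annulus with $\Delta$ in a single boundary component) will be handled separately, since there any arc with both endpoints in $\Delta$ has a good chance of becoming trivial after surgery.

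More precisely, I would pick a non-trivial arc $b_0$ with endpoints in $\Delta$ (any non-trivial arc will do, and one exists whenever we are outside the exceptional range). Given a $p$--simplex $\sigma=\langle a_0,\dots,a_p\rangle$, choose representatives in minimal position with $b_0$, so that intersections are transverse and no bigon between $b_0$ and any $a_i$ exists. Define the complexity of $\sigma$ to be $\sum_i|a_i\cap b_0|$. If this complexity is zero then $\sigma$ lies in the star of $b_0$. Otherwise, pick an outermost intersection point on $b_0$, i.e.\ a subarc $\beta\subset b_0$ from an endpoint of $b_0$ to a point $x\in a_j$ with $\beta\cap a_j=\{x\}$. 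Surger $a_j$ along $\beta$: cut $a_j$ at $x$ and reglue using two parallel copies of $\beta$ to produce two arcs $a_j'$ and $a_j''$, each with strictly fewer intersections with $b_0$. The key combinatorial claim is that $\langle a_0,\dots,a_p,a_j',a_j''\rangle$ is again a simplex of $\A(S,\Delta)$ (possibly after deleting repetitions), provided $a_j'$ and $a_j''$ remain non-trivial.

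To upgrade this to a contraction, I would take a simplicial map $f\colon S^n\to\A(S,\Delta)$, make it simplicial for some triangulation, and induct on the maximal complexity over simplices in the image. The surgery of the preceding paragraph provides, for each top-complexity simplex $\sigma$ in the image, a larger simplex containing $\sigma$ and also containing vertices of strictly lower complexity. Subdividing and pushing $f$ into this larger simplex allows one to replace $\sigma$ by its lower-complexity faces, decreasing the maximum. After finitely many steps, $f$ lands in the star of $b_0$ and is therefore null-homotopic. This is the standard ``bad simplex'' style argument.

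The main obstacle is controlling non-triviality of the surgered arcs. A surgered arc $a_j'$ or $a_j''$ can be trivial only if it cuts off a disc meeting $\Delta$ only at its endpoints, and in the non-exceptional setting one verifies that both components cannot simultaneously be trivial (else $a_j$ together with $\beta$ would have bounded a bigon, contradicting minimal position, or else $S$ itself would collapse to a disc/annulus with $\Delta$ on one boundary). If exactly one of the two is trivial, one discards it and the argument still reduces complexity. For the exceptional case, I would argue by an explicit induction on $q=|\Delta|$: a direct inspection of the disc with few marked points (where $\A$ is a polytope dual to the associahedron, so $(q-5)$--connected), together with the surgery procedure applied to an arc with both endpoints on the same boundary component, gives the sharp bound $q+2r-7$. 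The connectivity of the link of such a surgered vertex is controlled by a surface of strictly smaller complexity, so the induction closes.
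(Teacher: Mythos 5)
Your overall strategy (surgery towards a basepoint, plus special treatment of the disc/annulus case) is the right one, but there is a genuine gap in the step you flag as ``the main obstacle''. The claim that, outside the exceptional cases, the two surgered arcs $a_j'$ and $a_j''$ cannot both be trivial is false. Let $v$ be the endpoint of $b_0$ towards which you surger, and suppose the boundary component containing $v$ carries a second point $w\in\Delta$ (nothing in your set-up rules this out). Take $a_j$ to be an arc based at $w$ cutting off a disc whose boundary segment contains $v$ and no other point of $\Delta$; this $a_j$ is a legitimate non-trivial vertex (the disc meets $\Delta$ at $v$, not only at $\del a_j$), it meets any non-trivial $b_0$ ending at $v$, and surgering it towards $v$ produces two arcs from $w$ to $v$, each parallel to one of the two segments of the boundary between $w$ and $v$, hence \emph{both} trivial --- and this happens on a surface of arbitrary genus, not just on a disc or annulus. (In general, if both pieces are trivial, the union of the two trivial discs with the surgery band shows only that $a_j$ cuts off a disc containing $v$ in the interior of its boundary segment; this forces $S$ to be exceptional only when no such extra marked point is available.) When both pieces are trivial your complexity-reducing step has no output, so the induction stalls. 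This is exactly why the paper first proves a suspension lemma (Lemma~\ref{lemA}): adding a point of $\Delta$ to a boundary component that already carries one raises connectivity by one, which both settles the disc/annulus cases and, in the general case, reduces to the situation of at most one point of $\Delta$ per boundary component; only in that reduced situation does one run the surgery retraction (towards a fixed point $p$ along a fixed arc $a$), where a minimal-position argument shows at most one of the two surgered pieces can be trivial (Figure~\ref{trivial}).

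Two smaller points. First, ``outermost'' must be taken with respect to the whole arc system of the simplex, not just with respect to $a_j$: if another arc $a_i$ met the interior of $\beta$, the parallel copies of $\beta$ would cross it and $\lgl a_0,\dots,a_p,a_j',a_j''\rgl$ would not be a simplex. Second, your induction ``on maximal complexity over simplices in the image'' needs the surgeries performed at different simplices sharing a vertex to be compatible; the paper sidesteps this bookkeeping by writing down an explicit deformation retraction of the realization (Hatcher's weighted flow through the intermediate simplices $r_1(\s),\dots,r_k(\s)$) rather than modifying a simplicial map from a sphere. Your associahedron description of the disc case is correct and does give the sharp $(q-5)$-connectivity there, but the annulus case and the bookkeeping above would still need the suspension lemma or an equivalent device to close the argument.
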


Note that even though we are mostly interested in surfaces of positive genus,
the bound for the connectivity of $\A(S,\De)$ in the case of discs will actually play a role in the proof of the
connectivity of the next complex, $\BB(S,\De_0,\De_1)$. 

We start by a suspension lemma, i.e. type (2) argument: 

\begin{lem}\label{lemA}
Suppose $\A(S,\Delta)\neq \emp$ and $\Delta'$ is obtained from $\Delta$ by adding an extra point in a component of $\del S$ already
containing a point of $\Delta$. 
If $\A(S,\Delta)$ is $d$--connected, then  $\A(S,\Delta')$ is $(d+1)$--connected. 
\end{lem}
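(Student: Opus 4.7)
The plan is a suspension-type argument: I will show that $\A(S, \Delta')$ is homotopy equivalent to the suspension $\Sigma\,\A(S, \Delta)$, from which the lemma follows since suspension raises connectivity by one. Let $p$ be the new point added to form $\Delta'$, and let $p_-, p_+\in \Delta$ denote the two neighbors of $p$ in the cyclic order on the boundary component of $\del S$ containing $p$ (set $p_- = p_+$ if that component contains only one point of $\Delta$). Let $e_\pm\subset \del S$ be the short boundary arcs from $p$ to $p_\pm$.

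First, I would introduce a sliding operation. For any arc $a$ representing a vertex of $\A(S, \Delta')$ with an endpoint at $p$, one may isotope $a$ in a collar of $\del S$ so that it approaches $p$ from either the $p_-$-side or the $p_+$-side. Sliding the endpoint along $e_\pm$ in this collar then yields an arc $s_\pm(a)$ with the corresponding endpoint at $p_\pm$ instead of $p$. Call a simplex of $\A(S, \Delta')$ \emph{$\pm$-directed} if all of its arcs at $p$ admit simultaneous $p_\pm$-directed representatives with disjoint interiors. An isotopy argument in a collar neighborhood of $p$ shows that every simplex is either $+$-directed or $-$-directed (perhaps both), so the subcomplexes $X_\pm\subseteq\A(S, \Delta')$ of $\pm$-directed simplices cover $\A(S, \Delta')$. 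Their intersection $X_-\cap X_+$ is exactly the subcomplex of simplices with no arc at $p$, which is $\A(S, \Delta)$.

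The main step is then to show that each of $X_-, X_+$ is contractible. Once this is in hand, a standard Mayer--Vietoris / homotopy pushout argument identifies
$$\A(S, \Delta')\ \simeq\ X_-\cup_{\A(S, \Delta)} X_+\ \simeq\ \Sigma\,\A(S, \Delta),$$
giving the claimed gain of one in connectivity. For the contractibility of $X_\pm$, one uses the slide map $s_\pm$ as a deformation retracting $X_\pm$ onto a cone with apex a suitably chosen non-trivial arc $\alpha_\pm$ from $p$ to some point $q\in \Delta$ distinct from $p_\pm$. The hypothesis $\A(S, \Delta)\neq\emp$ is exactly what guarantees such a $q$ and such an $\alpha_\pm$ exist: starting from any non-trivial $a_0\in \A(S, \Delta)$, one obtains $\alpha_\pm$ by splitting $a_0$ off its endpoint and redirecting that endpoint into a collar neighborhood of $p$.

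The main obstacle is this last step, namely establishing that $X_\pm$ is contractible. The slide $s_\pm$ has genuine degeneracies: an arc that already approaches $p_\pm$ by a short path becomes trivial after sliding, and two distinct arcs in a simplex can collapse onto the same isotopy class. These degeneracies must be absorbed into the cone structure on $\alpha_\pm$, and verifying that the slide really defines a well-behaved deformation of $X_\pm$ onto the star of $\alpha_\pm$ requires careful bookkeeping of isotopy classes inside the collar of $\del S$ near $p$ and $p_\pm$. Once this is carried out, the suspension identification above completes the proof.
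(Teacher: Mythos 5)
Your overall strategy (exhibit $\A(S,\De')$ as a suspension of $\A(S,\De)$ via sliding arcs near the new point) is the right spirit, and is indeed how the paper argues, but two of your specific claims have genuine gaps. First, the decomposition $\A(S,\De')=X_-\cup X_+$ with $X_-\cap X_+=\A(S,\De)$ does not hold as defined: for an isotopy class of arcs ending at the new point $p$, ``approaching $p$ from the $p_+$-side or the $p_-$-side'' is not an isotopy-invariant notion (you can always perturb an arc near its endpoint), so under any reasonable reading every simplex whose arcs meet $p$ is simultaneously $+$- and $-$-directed; already a simplex consisting of a single arc at $p$ lies in $X_+\cap X_-$, so the intersection is strictly larger than the subcomplex of simplices avoiding $p$. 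Since your suspension identification is the homotopy pushout of $*\leftarrow X_+\cap X_-\rightarrow *$, you need $X_+\cap X_-\simeq \A(S,\De)$, and that is exactly what is unproven (and false for the stated $X_\pm$). Second, the contractibility of $X_\pm$ cannot be obtained from the slide $s_\pm$ onto the star of your apex $\alpha_\pm$: the slide only moves arcs with an endpoint at $p$ and is the identity on the subcomplex $\A(S,\De)\subseteq X_\pm$, so it could only retract $X_\pm$ onto a subcomplex containing $\A(S,\De)$; but $\Star(\alpha_\pm)$ does not contain $\A(S,\De)$, since any arc of $\A(S,\De)$ having essential intersection with $a_0$ (hence with the parallel arc $\alpha_\pm$) is a vertex outside that star and is fixed by the slide forever. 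So the obstruction is not ``bookkeeping of degeneracies''---the proposed flow simply does not terminate in the claimed cone.

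For comparison, the paper resolves both points by choosing the two special ``small'' arcs rather than arbitrary ones: $I$, cutting off a disc containing only the new point, and $I'$, cutting off a disc containing only the old point $p$ adjacent to it. It decomposes $\A(S,\De')=\Star(I)\cup_{\link(I)}X$, where $X$ is the subcomplex of simplices not containing $I$; then $\link(I)\cong\A(S,\De)$ gives the correct intersection, $\Star(I)$ is a cone, and $X$ deformation retracts onto $\Star(I')$ by sliding endpoints from $p$ to the new point. The choice of $I'$ is what makes the retraction land where it should (every simplex avoiding $p$ is automatically in $\Star(I')$, because $I'$ lives in a small neighbourhood of $p$), and the only arc that becomes trivial under the slide is $I$ itself, which was excluded from $X$ by definition. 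If you want to salvage your two-sided picture, you would need your pieces and apexes to be built from such disc-bounding arcs at the old and new points, at which point you essentially recover the paper's argument; the Mayer--Vietoris/van Kampen step you invoke at the end is then fine.
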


One can actually show that $\A(S,\Delta')$ is homeomorphic to the suspension of $\A(S,\Delta)$ (see \cite{Hat91}, revised version) 
but we will not need that.

\begin{proof}
\begin{figure}[ht]
\begin{lpic}{II.2(0.50,0.50)}
 \lbl[b]{13,16;$p$}
 \lbl[b]{29,15;$p'$}
 \lbl[b]{42,2;$I$}
 \lbl[b]{-1,2;$I'$} 
\end{lpic}
\caption{}\label{II}
\end{figure}
Suppose $\Delta'=\Delta\cup \{p'\}$ and $p\in\Delta$ is a closest element to $p'$ in $\del S$. Let $I$ and $I'$ be the
arcs drawn in Figure~\ref{II}, where the points of $\Delta'$ to the left of $p$ and right of $p'$ may be equal to $p'$ or $p$. 
We have a decomposition 
$$\A(S,\Delta')=\Star(I)\cup_{\link(I)}X$$ 
where $X$ is the subcomplex of $\A(S,\Delta')$ of collections of arcs not containing $I$. We will show that $X$ deformation retracts
onto the star of $I'$, and hence that it is contractible.  
The result then follows from the fact
that the link of $I$ is isomorphic to $\A(S,\De)$. (For $d\ge 0$, van Kampen's theorem implies that $\A(S,\De')$ is simply connected,
so that it is enough to check connectivity in homology, which follows from 
the Mayer-Vietoris exact sequence.) 

$\Star(I')$ is exactly the subcomplex of $X$ of arcs without endpoints at $p$. 
The idea of the retraction $X\to\Star(I')$ is to move the arcs one by one from $p$ to $p'$ along $I'$, as shown in Figure~\ref{retraction}. 
\begin{figure}[ht]
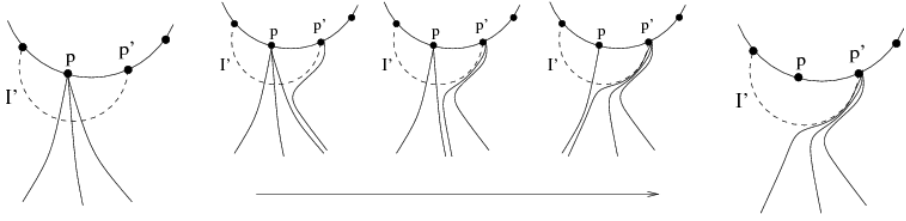

\begin{lpic}{retract.flip(0.50,0.50)}
\end{lpic}
\caption{Simplices $r_1,r_2,r_3$ of the retraction in the case of $3$ germs of arcs at $p$}\label{retraction}
\end{figure}

Note that the only arc that would become trivial when slided from $p$ to $p'$ is $I$, and it is not in $X$. 

The retraction can be made explicit as follows. We want a map $f\colon I\x X\to X$ so that $f(s,\Star(I'))=id_{\Star(I')}$ and
$f(1,X)\subset \Star(I')$. Given a simplex $\s=\lgl a_0,\dots,a_q\rgl$ of $X$, 
we can consider the arcs of $\s$ attached at
$p$. Suppose that there are $k$ 'germs of arcs' $\ga_{1},\dots,\ga_{k}$ occurring in that order at $p$, 
with $\ga_i$ a germ of the arc $a_{j_i}$, where it is possible that $j_i=j_{i'}$ for some $i\neq i'$ if the arc $a_{j_i}$ has both its
endpoints at $p$. 
There is a sequence of $k$ $(q+1)$--simplices $r_1(\s),\dots,r_k(\s)$ associated to $\s$, where $r_i(\s)$ is obtained from $\s$ by
moving the first $i$ germs of arcs at $p$ to $p'$ and keeping the last $k-i+1$ germs, so that $\ga_i$ has a copy both at $p$ and at
$p'$. (See Figure~\ref{retraction} for an example when $k=3$.) 
If $L$ denotes the operator on arcs that moves the first germ of the arc at $p$ to $p'$, we have $r_i(\s)=\lgl b_0,\dots,b_{q+1}\rgl$ 
with $b_l=L^{\epsilon_i(l)}(a_l)$ for $l\le q$ and $b_{q+1}=L^{\epsilon_i(j_i)+1}(a_{j_i})=L^{\epsilon_{i+1}(j_i)}(a_{j_i})$, 
where $\epsilon_i(l)$ is
the number of $j<i$ such that $\ga_j$ is a germ of $a_l$.

A point in a simplex $\s$ corresponds to a weighted collection of arcs via the barycentric coordinates $(t_0,\dots,t_q)$, 
the arc $a_i$ having weight $t_i$.  
Assign to the $i$th germ $\ga_i$ the weight $w_i=t_{j_i}/2$. As $\sum_{j=0}^pt_j=1$, we have $\sum_{i=1}^kw_i\le 1$.   
Now for $\sum_{j=1}^{i-1}w_j\le s\le \sum_{j=1}^iw_j$, define the retraction by 
$$f(s,[\s,(t_0,\dots,t_q)])=[r_i(\s),(v_0,\dots v_{q+1})]$$ 
where the weight $v_i=t_i$ except for  the pair
$$(v_{j_i},v_{q+1})=(t_{j_i}-2(s-\sum_{j=1}^{i-1}w_j),2(s- \sum_{j=1}^{i-1}w_j))$$ 
i.e. the weight of $(b_{j_i},b_{q+1})$ goes from $(t_{j_i},0)$ to
$(0,t_{j_i})$ as $s$ goes from  $\sum_{j=1}^{i-1}w_j$ to  $\sum_{j=1}^iw_j$.
For $\sum_{i=1}^k w_i\le s\le 1$, define  $f(s,(\s,(t_0,\dots,t_q)))$ to be constant, equal to 
$f(\sum_{i=1}^k w_i,(\s,(t_0,\dots,t_q)))$.  
Note that  $f(1,(\s,(t_0,\dots,t_q)))$ lies in the face of $r_k$ which is in $\Star(I')$. 

This deformation is continuous as going
to a face of $\s$ corresponds to a $t_i$ (and the corresponding $w_j$ if any) going to zero.
\end{proof}

\begin{proof}[Proof of Theorem~\ref{A}]
We first consider the special case of the disc and cylinder with all points of $\Delta$ in one boundary component of $S$.  
Figure~\ref{nonemp}(a) shows that $\A(D^2,\Delta)$ is non-empty as soon as $\Delta$ has $4$ points, and \ref{nonemp}(b) that 
$\A(S^1\x I,\Delta)$ is non-empty if
$S^1\x I$ has two points in one boundary component. As $4+2-7=-1$ ($q=4, r=1$) and $2+4-7=-1$ ($q,r=2$), and $(-1)$--connected means
non-empty, 
the theorem, for the special cases, is true when $r=1$ and $q\le 4$, and when $r=2$ and $q\le 2$. 
The result then follows more generally for any $q$ by the lemma, which shows that the connectivity of these complexes goes up by one each time $q$ goes up by one.
\begin{figure}[ht]
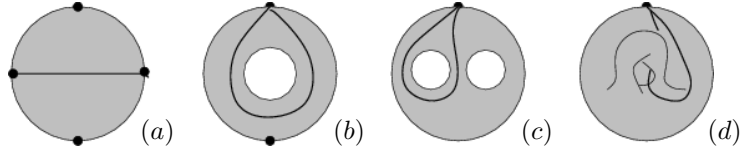

\begin{lpic}{nonemp(0.52,0.52)}
 \lbl[b]{38,0;$(a)$}
 \lbl[b]{87,0;$(b)$}
 \lbl[b]{135,0;$(c)$}
 \lbl[b]{183,0;$(d)$} 
\end{lpic}
\caption{Checking non-emptiness for the disc, cylinder, pair of pants and genus 1 surface}\label{nonemp}
\end{figure}
We assume from now on that we are not in the special cases.

For the general case, the lemma allows us to assume that there is at most one point of $\Delta$ in each boundary component. We claim
first that $\A(S,\Delta)$ is non-empty: this is clear if $\Delta$ has at least 2 points, as they lie in different boundary
components---any arc connecting two such points will be non-trivial. 
If $\Delta$ has only one point, we have that $S$ has genus at least one or has at least three boundary components.
Figure~\ref{nonemp}(c) and (d) show that there is at least one non-trivial arc in both of these cases. 

Now fix a point $p$ of $\Delta$ and an arc $a$ of $\A(S,\Delta)$ with at least one of its endpoints at $p$. Fix also a germ of $a$ at
$p$. 
We want to define a retraction of the complex onto the star of $a$. The argument is
similar to that of the proof of Lemma~\ref{lemA}, and is summarized by Figure~\ref{retract2}. 
\begin{figure}[ht]
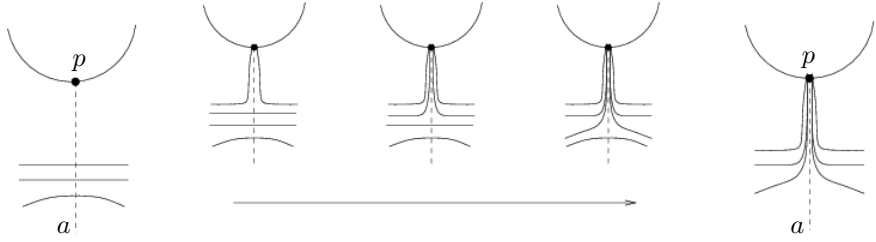

\begin{lpic}{retract2(0.50,0.50)}
 \lbl[b]{19,43;$p$} 
 \lbl[b]{15,0;$a$} 
 \lbl[b]{213,44;$p$} 
 \lbl[b]{210,0;$a$} 
\end{lpic}
\caption{Retraction of $\A(S,\De)$ in the case of $3$ germs of arcs crossing $a$}\label{retract2}
\end{figure}
Given a $q$--simplex $\s$ of $\A(S,\Delta)$, we
represent it by a simplex with minimal and transverse intersection with $a$. If there are $k$ intersections at germs of arcs
$\ga_1,\dots,\ga_k$, we define as in the lemma $k$ intermediate simplices $r_1(\s),\dots,r_k(\s)$ by successively cutting the arcs at the
intersection points and connecting the new endpoints to $p$ along $a$. 

More precisely, 
if $a_i$ intersects $a$ at a point $x$, we can define $L(a_i)$ and $R(a_i)$ to be the arcs obtained from $a_i$ by
cutting $a_i$ at $x$
and joining the new endpoints to $p$ along $a$, then pushing the arcs a little
so they become disjoint of $a$, of $a_i$ and of each other---there is then one
arc on each side of $a$ between $x$ and $p$ and we call $L(a_i)$
(resp.~$R(a_i)$) the arc running to the left (resp.~right) of $a$ towards $p$. 
If $L(a_i)$ or $R(a_i)$ is a trivial arc, we forget
it. As there is at most one point of $\De$ per boundary components, and arcs intersect $a$ minimally, 
the only trivial arcs that can occur in this
process are of the type shown in Figure~\ref{trivial} where one side of the surgered arc becomes trivial. 
In particular both $L(a_i)$ and $R(a_i)$ cannot be trivial at the same
time.
\begin{figure}[ht]
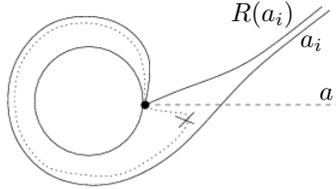

\begin{lpic}{trivial.2(0.50,0.50)}
 \lbl[b]{85,23;$a$}
 \lbl[b]{81,36;$a_i$}
 \lbl[b]{68,42;$R(a_i)$} 
\end{lpic}
\caption{Arc where surgery creates one trivial arc (and one non-trivial one)}\label{trivial}
\end{figure}

Define $r_i(\s)$ to be the $q+l_i$--simplex, with  $1\le l_i\le i+2$, 
with vertices $L^{\epsilon_i(l)}(a_l)$ and $R^{\epsilon_i(l)}(a_l)$ (if non-trivial), where $\epsilon_i(l)\in\{0,\dots,k\}$ 
is the number germs $\ga_j$ included in $a_l$ with $j<i$, and where
$L^{0}(a_l)=R^{0}(a_l)=a_l$ is a single vertex. 
Then the retraction is as in the lemma though when
$a_{j_i}$ is replaced by two arcs $R(a_{j_i})$ and $L(a_{j_i})$, the weight of $(a_{j_i},R(a_{j_i}),L(a_{j_i}))$ in $r_i$ will go from
$(t_{j_i},0,0)$ to $(0,t_{j_i}/2,t_{j_i}/2)$ so that the total weight stays equal to 1. 

For this retraction to be well-defined, we need that the arcs obtained by cutting do not depend on the representative of $\s$. This
follows from the fact that isotopic collections of arcs in minimal transverse intersection are isotopic through minimal transverse
intersection. This can be proved by modifying a given an isotopy  as in the proof of
Proposition~\ref{I1}. 
\end{proof}

Next we deduce the connectivity of the subcomplex $\BB(S,\Delta_0,\Delta_1)$ of arcs between two subsets $\Delta_0$ and $\Delta_1$ of
$\Delta$.  This is the most intricate of all the connectivity argument. 
To begin with, we need a definition to be able to state the connectivity bound: Disjoint sets $\De_0,\De_1\subset \del S$ define a decomposition of
$\del S$ into vertices (the points of $\De_0\cup\De_1$), edges between the vertices, and circles without vertices.
 We say that an edge is {\em pure} if both
its endpoints are in the same set, $\De_0$ or $\De_1$. We say that an edge is {\em impure} otherwise. Note that the number of impure edges
is always even.

\begin{thm}\label{D0D1}
The complex $\BB(S,\Delta_0,\Delta_1)$ is $(4g+r+r'+l+m-6)$--connected, where $g$ is the genus of $S$, $r$ its number of boundary
components, $r'$ the number of components of $\del S$ containing points of $\Delta_0\cup\Delta_1$, $l$ is half the
number of impure edges and $m$ is the number
of pure edges. 
\end{thm}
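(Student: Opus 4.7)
My plan is to prove Theorem~\ref{D0D1} by induction on the complexity $N(S,\De_0,\De_1):=4g+r+r'+l+m$, showing $(N-6)$-connectivity of $\BB(S,\De_0,\De_1)$. The structure mirrors the mixed strategy used for $\A(S,\De)$: a base case verified directly, a suspension-type lemma to raise connectivity by adding boundary points, and a Hatcher-style surgery argument to push maps out of the ambient complex $\A(S,\De_0\cup\De_1)$ into $\BB$.

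First I would settle the base case $N=5$, corresponding to non-emptiness (connectivity $-1$). This reduces to checking a handful of minimal configurations, such as $g=0$, $r=3$, $r'=2$, $l=m=0$ with a single point of each color on distinct boundary circles, where one directly exhibits an arc of $\BB$. All surfaces with $N\ge 5$ should admit at least one non-separating non-trivial arc with one endpoint in $\De_0$ and one in $\De_1$.

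Next I would establish a suspension lemma analogous to Lemma~\ref{lemA}: adding a point $p'$ to $\De_0$ or $\De_1$ on a boundary component that already carries a point of $\De_0\cup\De_1$ raises the connectivity of $\BB$ by exactly the amount predicted by the formula (each such addition increments either $l$ or $m$ by one, hence $N$ by one). Picking a closest already-present point $p$ and the short arc $I$ between $p$ and $p'$, one constructs a deformation retraction of the subcomplex of $\BB$ not containing $I$ onto the star of a parallel arc $I'$, by sliding germs of arcs at $p$ over to $p'$ exactly as in the $\A$-case. One must verify that the slide never leaves $\BB$ and never creates trivial arcs; this forces a separate treatment when $p$ and $p'$ lie in opposite $\De_i$'s versus the same one.

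For the main inductive step, I would embed $\BB\inc\A(S,\De_0\cup\De_1)$ and, given a simplicial map $f\colon S^k\to\BB$ with $k\le N-6$, first use Theorem~\ref{A} to extend $f$ to $F\colon D^{k+1}\to\A$ (contractibility suffices outside the disc/annulus edge cases; in those cases one checks $q+2r-7\ge k$ by hand). The task is then to modify $F$ to land in $\BB$ by eliminating simplices containing ``bad'' arcs, i.e.\ arcs with both endpoints in $\De_0$ or both in $\De_1$. Following Hatcher's surgery philosophy as refined in \cite{Wahl08}, one picks a bad top-dimensional simplex, cuts a bad arc $a$ at its intersection with a reference arc of $\BB$, and reconnects the resulting pieces through a point of the opposite $\De_i$; the links of bad simplices in $\A$ are controlled by the inductive hypothesis applied to the cut surface $S\setminus a$, whose invariant $N$ has strictly decreased (the genus drops or the boundary data simplifies in a way that compensates the new endpoints produced by cutting).

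The main obstacle will be the bookkeeping for this surgery: one has to choose a well-ordering on the bad arcs of the polyhedron $D^{k+1}$ so that the replacement is unambiguous and terminates, and one must verify that the invariant $N$ of the cut surface drops by exactly the right amount so that the induction hypothesis gives the connectivity needed to fill. A secondary subtlety is the case where no point of the opposite $\De_i$ sits on the boundary component near a bad endpoint; this is precisely the configuration penalized by the $m$ term (pure edges), and it is handled by applying the suspension lemma in reverse — reducing to a configuration with more points — or by invoking the base case directly on a degenerate piece.
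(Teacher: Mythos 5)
Your overall strategy (extend a sphere in $\BB$ to a disc in the contractible $\A(S,\De_0\cup\De_1)$, then push the disc back into $\BB$ by eliminating simplices of pure arcs, with auxiliary lemmas for adding boundary points) is the same skeleton as the paper's proof, but the proposal is missing the actual content of the hard step, which is exactly where Harer's original argument had its gap. First, the modification step is not a surgery on a single bad arc ``at its intersection with a reference arc of $\BB$'': within the simplicial map $\hat f\colon D^{k+1}\to\A$ a bad arc is a vertex shared by many simplices and cannot be altered in isolation; the correct move is to take a maximal (suitably \emph{regular}) bad simplex $\s$, observe that $\hat f$ sends $\mathrm{Link}(\s)$ into the join of the arc complexes of the pieces of $S\minus\hat f(\s)$, and replace $\hat f$ on $\Star(\s)$ by a filling in that join. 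The decisive difficulty, which your plan does not address, is that some pieces of the cut surface carry points of only one colour: for those pieces $\BB$ is empty, so your inductive hypothesis says nothing, and one is forced to use the full complex $\A(Y_j,\Ga_j)$, which when $Y_j$ is a disc is only $(q_j-5)$--connected. The connectivity of the join then comes out right only via the combinatorial bound $3d\le p+1$ on the number $d$ of disc pieces, and that bound in turn requires having first reduced to the case $m=0$ (and $r=r'$) by Lemmas~\ref{case1}--\ref{case4}; none of this reduction or counting appears in your outline. Termination is also subtler than ``choose a well-ordering on the bad arcs'': replacing the star of a maximal-dimensional bad simplex can create new bad simplices of the same dimension, and the paper needs the auxiliary subsurface $Y_\s$ and the ordering by the pair $(Y_\s,p)$, restricted to regular bad simplices, to get a strictly decreasing complexity.

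Two further points of your set-up would fail as stated. The induction on the single number $N=4g+r+r'+l+m$ is not well-founded for the cut pieces: cutting along one non-separating pure arc with both endpoints on the same boundary circle lowers $g$ by $1$ but raises $r$, $r'$ and $m$ so that $N$ of the piece equals $N$ of $S$; the paper avoids this by inducting on the lexicographically ordered triple $(g,r,q)$. And the proposed uniform ``$+1$ suspension'' upon adding a point is false when the new point $p'$ has the opposite colour to its neighbour $p$: the slide moves the endpoint of an impure arc from $p$ to $p'$, changing its colour, so the slid arc has both endpoints in the same $\De_i$ and leaves $\BB$ altogether. In the paper the point-adding lemmas only cover the pure configurations (giving contractibility in the situations of Lemmas~\ref{case1} and~\ref{case2} and a genuine suspension in Lemma~\ref{case3}), while increases of $l$ are handled inside the main induction, not by a sliding argument.
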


The proof of the theorem is in the same spirit as that of the previous theorem: the general argument works only in a restricted
situation, so one first eliminates a number of cases, which we do by `pushing' and surgery arguments as in the lemma and the theorem
above. The argument for the general case will then be our first inductive deduction (argument of type (3)).

We say that a boundary component of $S$ with points of $\De_0\cup\De_1$ is {\em pure} if it is composed of pure edges,
 i.e.~the points are either all in 
 $\De_0$ or all in $\De_1$.
The first lemma gives contractibility when $S$ has a pure boundary, which is the case where the argument of Theorem~\ref{A} can be applied: 

 \begin{lem}\label{case1}
 If $S$ has at least one pure boundary component, then $\BB(S,\De_0,\De_1)$ is contractible. 
\end{lem}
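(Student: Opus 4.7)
The plan is to adapt the surgery retraction from the proof of Theorem~\ref{A} to the subcomplex $\BB(S,\De_0,\De_1)$, using the pure boundary component as the place to anchor the surgery so that the resulting arcs still lie in $\BB$.

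First I would dispose of trivial cases: if $\De_1=\emp$ then $\BB(S,\De_0,\De_1)$ is itself empty and there is nothing to prove. Otherwise, let $C$ be a pure boundary component of $S$; after swapping the roles of $\De_0$ and $\De_1$ if needed, assume $C\cap(\De_0\cup\De_1)\subset \De_0$. Fix a point $p\in C\cap \De_0$ and choose an arc $a\in \BB(S,\De_0,\De_1)$ with an endpoint at $p$. Such an arc exists, since any arc joining $p$ to a point of $\De_1$ connects two distinct boundary components of $S$ and is in particular non-trivial. I will then construct a deformation retraction of $\BB(S,\De_0,\De_1)$ onto the star of $a$, which is contractible.

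The key observation concerns the surgery itself. Given a simplex $\s=\lgl a_0,\dots,a_q\rgl$ of $\BB(S,\De_0,\De_1)$ put in minimal transverse position with $a$, consider a germ of some $a_i$ at an intersection point $x\in a_i\cap a$. Surgering $a_i$ at $x$ and sliding the two new endpoints to $p$ along $a$, on each side of $a$, yields two arcs as in the proof of Theorem~\ref{A}. Since $a_i\in \BB$ has exactly one endpoint in $\De_0$ and one in $\De_1$, one of these two arcs joins the $\De_0$-endpoint of $a_i$ to $p\in\De_0$ and therefore does not lie in $\BB$; it is discarded. The other joins the $\De_1$-endpoint of $a_i$ to $p\in C$, and since that $\De_1$-endpoint lies on a boundary component distinct from $C$ (because $C$ is pure $\De_0$), this remaining arc joins two distinct boundary components and is in particular non-trivial. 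Hence each surgery replaces an arc of $\s$ by exactly one new arc of $\BB$, and these new arcs can be arranged to be disjoint from $a$ and from each other by a small push-off.

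With this in hand, I would follow the interpolation scheme of Theorem~\ref{A}: form a sequence $r_1(\s),\dots,r_k(\s)$ of simplices of $\BB$ by successively surgering the $k$ germs of $\s$ meeting $a$, and linearly shift the barycentric weights from each surgered arc to its unique replacement over a subinterval of $I$ proportional to the corresponding coordinate. The output at $s=1$ lies in $\Star(a)$, and continuity follows from the fact that isotopic minimal transverse collections are connected through such collections, exactly as in the proof of Proposition~\ref{I1}. The main obstacle compared with Theorem~\ref{A} is to verify that the surgery stays inside $\BB$ and avoids trivial arcs; this is precisely what the pure boundary hypothesis supplies, so no new work beyond the template of Theorem~\ref{A} is required.
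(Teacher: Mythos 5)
Your proof is correct and takes essentially the same route as the paper: a surgery retraction onto the star of an arc $a$ anchored at the pure boundary component, where exactly one of the two arcs produced by each surgery survives (the one still joining $\De_0$ to $\De_1$), and it is automatically non-trivial because its endpoints lie on distinct boundary components. The only minor caveat is your dismissal of $\De_1=\emp$ (an empty complex is not contractible), but this degenerate case is excluded in the paper's setting where both sets are non-empty.
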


\begin{figure}[ht]
\begin{lpic}{lemmas12.2(0.58,0.58)}
 \lbl[b]{90,0;(a)}
 \lbl[b]{10,27;$a$}
 \lbl[b]{33,10;$a_i$}
 \lbl[b]{47,9;$R(a_i)$}
 \lbl[b]{47,30;$L(a_i)$}
 \lbl[b]{198,0;(b)} 
 \lbl[b]{157,8;$I'$}
\end{lpic}
\caption{Lemmas~\ref{case1} and \ref{case2}}\label{lemmas12}
\end{figure}

\begin{proof}
Choose an arc $a$ with one boundary point on a pure boundary of $S$. We do a retraction onto the star of $a$ as in the proof of
Theorem~\ref{A}, though where there is always exactly one arc which is kept after surgery as only one of $L(a_i)$ and $R(a_i)$ still
has a boundary point in each of $\De_0$ and $\De_1$. (See Figure~\ref{lemmas12}(a).) The retraction is well-defined because each newly
created arc will necessarily be non-trivial, having its boundary points in two different components of $\del S$. 
\end{proof}

\begin{lem}\label{case2}
If $S$  has at least one pure  edge between a pure and an impure one in a  boundary component of $S$, 
then $\BB(S,\De_0,\De_1)$ is contractible. 
\end{lem}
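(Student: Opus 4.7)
The plan is to argue exactly as in Lemma~\ref{case1}, by constructing a deformation retraction of $\BB(S,\De_0,\De_1)$ onto the star of a carefully chosen arc $I'$, and thereby deducing contractibility (stars are contractible).

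By hypothesis, some boundary component of $S$ contains three consecutive edges $e',e,e''$ with $e'$ and $e$ pure and $e''$ impure. Write $p_{-1},p_0,p_1,p_2$ for the corresponding vertices in cyclic order, so that $p_{-1},p_0,p_1\in\De_0$ (say) and $p_2\in\De_1$. I would take $I'$ to be a non-trivial arc joining a vertex in $\De_0$ among $\{p_{-1},p_0,p_1\}$ to $p_2\in\De_1$, drawn as in Figure~\ref{lemmas12}(b), positioned so that any disc that could be cut off on one side of $I'$ by a surgered arc meets the boundary in a segment containing at least one of $p_{-1},p_0,p_1$. Since $p_0\in\De_0$ and $p_2\in\De_1$, $I'$ is in $\BB(S,\De_0,\De_1)$.

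With this choice of $I'$, the retraction is defined by the same surgery procedure used in the proofs of Theorem~\ref{A} and Lemma~\ref{case1}. Given a simplex $\s=\lgl a_0,\dots,a_q\rgl$ of $\BB(S,\De_0,\De_1)$ represented in minimal transverse position with $I'$, each germ of an intersection of some $a_j$ with $I'$ is used to cut $a_j$ and join the two new endpoints to a fixed endpoint of $I'$ along $I'$, producing arcs $L(a_j)$ and $R(a_j)$. Because $I'$ itself has one endpoint in $\De_0$ and one in $\De_1$, exactly one of $L(a_j),R(a_j)$ has endpoints in both $\De_0$ and $\De_1$ (the one that retains the $\De_1$-endpoint of $a_j$) and is kept, while the other has both endpoints in $\De_0$ and is discarded. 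The intermediate simplices $r_i(\s)$, the interpolation of weights, and the continuity of the resulting deformation then work word for word as in Theorem~\ref{A}, and the endpoint of the deformation lies in $\Star(I')$.

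The main obstacle is to verify that the kept surgered arc is always \emph{non-trivial}. In Lemma~\ref{case1} this was automatic, because the kept arc joined the pure boundary component to a different boundary component; here we cannot rely on that. The role of the hypothesis is precisely to ensure non-triviality: because the pure edge $e$ sits between another pure edge $e'$ and the impure edge $e''$, there are three marked points of $\De_0$ lined up along the relevant boundary segment, and the arc $I'$ can be placed so that any disc potentially bounded by a kept surgered arc together with a portion of $\del S$ must contain at least one of $p_{-1},p_0,p_1$. The configurations in which triviality could occur are ruled out case by case (depending on the relative positions of $a_j$, its intersection with $I'$, and its $\De_1$-endpoint), analogously to the discussion around Figure~\ref{trivial} in the proof of Theorem~\ref{A}. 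Once non-triviality is confirmed, the surgery retraction is well-defined and continuous, and contractibility of $\BB(S,\De_0,\De_1)$ follows.
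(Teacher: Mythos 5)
Your overall strategy (retracting $\BB(S,\De_0,\De_1)$ onto the star of a well-chosen impure arc by Hatcher-style surgery) can be made to work, but as written there is a genuine gap exactly at the step you yourself flag as ``the main obstacle'': the non-triviality of the kept surgered arcs is never verified, and this is precisely where the hypothesis has to enter. With your labels $p_{-1},p_0,p_1\in\De_0$, $p_2\in\De_1$, the choice of basepoint is left open (``a vertex in $\De_0$ among $\{p_{-1},p_0,p_1\}$''), and your ``positioning'' requirement is not a condition on $I'$ alone: whether a surgered piece cuts off an unmarked disc depends on the simplex being retracted, not just on how $I'$ is drawn. The choice really matters: if $I'$ is based at $p_1$ (or at $p_{-1}$ when the edge on its other side is impure), then boundary-parallel impure arcs at that vertex exist (e.g.\ along the edge $[p_1,p_2]$), and nothing in your argument excludes that a kept piece is such a trivial arc; it cannot simply be forgotten as in Figure~\ref{trivial}, since it is the only piece retained from $a_j$ and discarding it wrecks the weight bookkeeping of the retraction. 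The repair is to base $I'$ at $p_0$, the common endpoint of the two pure edges (e.g.\ $I'$ from $p_0$ to $p_2$ enclosing only $p_1$), and to surger towards $p_0$: since both neighbours of $p_0$ on $\del S$ lie in $\De_0$, no impure arc ending at $p_0$ can cut off a disc free of marked points, so $I'$ and every kept piece are automatically non-trivial and the retraction goes through. Without this (or an equivalent) observation the proof is incomplete.

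For comparison, the paper argues differently: it runs the pushing argument of Lemma~\ref{lemA} rather than the surgery argument of Lemma~\ref{case1}, sliding all endpoints at $p_1$ across the pure edge to $p_0$. The only arc that could become trivial under this slide is the arc from $p_1$ to $p_{-1}$ enclosing $p_0$, which is pure and hence not a vertex of $\BB(S,\De_0,\De_1)$ at all; so the retraction is defined on the entire complex and lands in the star of the arc from $p_0$ to $p_2$ enclosing $p_1$, with no case analysis. Once anchored at $p_0$, your surgery produces essentially the same retraction, so the two arguments are close cousins, but in the paper's formulation the hypothesis does its work for free.
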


\begin{proof}
This is completely analogous to the proof of Lemma~\ref{lemA}, except that the arc corresponding to $I$ in the case of
Lemma~\ref{lemA} does not exist in our complex $\BB(S,\De_0,\De_1)$ (compare Figure~\ref{II} with 
Figure~\ref{lemmas12}(b)). Hence the argument of Lemma~\ref{lemA} gives a contraction 
from $X=\BB(S,\De_0,\De_1)$ to the star of $I'$, showing that the complex is contractible. 
\end{proof}

\begin{lem}\label{case3}
If the complex is non-empty, adding a pure edge between two impure edges increases the connectivity of $\BB(S,\De_0,\De_1)$ by one. 
\end{lem}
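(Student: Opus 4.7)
The plan is to mirror the suspension-style argument from Lemma~\ref{lemA}, suitably adapted to $\BB$. Say $\De_1' = \De_1 \cup \{p'\}$ is obtained by adding a new point $p' \in \del S$ immediately next to $p \in \De_1$ on the boundary, so that the new pure edge $[p,p']$ lies between two impure edges $[q,p]$ (with $q \in \De_0$) and $[p',q']$ (with $q' \in \De_0$). The goal is to choose two arcs $I, I'$ in $\BB(S,\De_0,\De_1')$, use $I$ to realize the big complex as $\Star(I) \cup_{\link(I)} X$ where $X$ is the subcomplex of simplices not containing $I$, identify $\link(I) \cong \BB(S,\De_0,\De_1)$, and use $I'$ to deformation-retract $X$ onto a contractible subcomplex.

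Take $I$ to be the arc from $p'$ to $q$ running just inside $\del S$ around $p$, so that $I \cup [q,p] \cup [p,p']$ bounds a small disc $D \subset S$ with $p$ on its boundary; this $I$ is non-trivial since $D$ carries $p$ as an extra marked point. To identify the link: any vertex of $\link(I)$ is a non-trivial arc of $\BB$ in $S \setminus I$; none can lie inside $D$ (the only candidate arc from $q$ to $p'$ inside $D$ is $I$ itself), so they all lie in the complementary surface $\tilde S \cong S$. On $\del \tilde S$ the boundary subarc $[q,p,p']$ has been replaced by the cut arc $I$, removing $p$ from the marked set but leaving the cyclic pattern of marked points and the partition into $\De_0$ and $\De_1'\setminus\{p\}$ exactly matching the pattern of $(S,\De_0,\De_1)$ under the homeomorphism sending $p' \in \tilde S$ to $p \in S$. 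A standard disc-witness argument checks that arcs in $\tilde S$ are non-trivial in $\tilde S$ iff they are non-trivial in $S$, yielding $\link(I) \cong \BB(S,\De_0,\De_1)$.

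To contract $X$, choose $I'$ in the spirit of Figure~\ref{II} from the proof of Lemma~\ref{lemA}, arranged to cross $I$ so that $\Star(I') \subset X$ equals precisely the subcomplex of arcs in $X$ having no endpoint at $p'$. Define a deformation retraction $f\colon I \times X \to X$ that, for a simplex $\sigma$, enumerates the germs of arcs of $\sigma$ at $p'$ in cyclic order and slides them one at a time from $p'$ to $p$ along the pure edge $[p,p']$, passing through intermediate simplices $r_1(\sigma),\dots,r_k(\sigma)$ constructed exactly as in Lemma~\ref{lemA}. The only arc at $p'$ whose slide to $p$ would yield a trivial arc is $I$ itself (any such arc together with the slide path and $[q,p]$ would bound a disc with no extra marked points, forcing the arc to coincide with $I$), and $I \notin X$, so the retraction is well-defined. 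Its image lies in $\Star(I')$, a cone with apex $I'$ and hence contractible.

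Finally, the decomposition $\BB(S,\De_0,\De_1') = \Star(I) \cup_{\link(I)} X$ with both $\Star(I)$ and $X$ contractible and $\link(I) \cong \BB(S,\De_0,\De_1)$ gives, via Mayer-Vietoris and van Kampen, that $\BB(S,\De_0,\De_1')$ is $(d+1)$-connected whenever $\BB(S,\De_0,\De_1)$ is $d$-connected; the non-emptiness hypothesis is used to ensure $d \geq -1$ so that $\link(I)$ is non-empty and van Kampen applies. The main obstacle is the second step: specifying $I'$ and carefully verifying that the retraction is a well-defined, continuous deformation with image in $\Star(I')$. This is a direct adaptation of the technical core of Lemma~\ref{lemA}, with the additional subtlety that $I'$ must be a non-trivial arc of $\BB$, so its endpoints must lie one in $\De_0$ and one in $\De_1'$, a constraint not present in the $\A$-complex setting.
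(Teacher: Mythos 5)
Your proposal is correct and takes essentially the same route as the paper, whose proof of this lemma is simply ``precisely as in Lemma~\ref{lemA}'' with the two arcs $I,I'$ of Figure~\ref{lemmas34}(a): you spell out exactly that suspension argument (decomposition $\Star(I)\cup_{\link(I)}X$, identification $\link(I)\cong\BB(S,\De_0,\De_1)$, retraction of $X$ onto $\Star(I')$ by sliding germs across the pure edge), with your $I$ and $I'$ merely exchanging the roles of $p$ and $p'$ relative to the figure, which is immaterial. You also correctly isolate the point that distinguishes this case from Lemma~\ref{case2}, namely that a pure edge flanked by two impure edges guarantees both $I$ and $I'$ are impure and hence lie in $\BB(S,\De_0,\De_1')$.
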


\begin{figure}[ht]
\begin{lpic}{lemmas34.2(0.58,0.58)}
 \lbl[b]{57,0;(a)}
 \lbl[b]{30,43;$I$}
 \lbl[b]{25,3;$I'$} 
 \lbl[b]{160,0;(b)}
 \lbl[b]{113,45;$I_0$}
 \lbl[b]{141,29;$\del_0S$} 
\end{lpic}
\caption{Lemmas~\ref{case3} and \ref{case4}}\label{lemmas34}
\end{figure}

\begin{proof}
This is now precisely as in Lemma~\ref{lemA}, as shown in Figure~\ref{lemmas34}(a). 
\end{proof}

\begin{lem}\label{case4}
When $S$ has at least one impure edge and the complex is non-empty, adding a boundary component to $S$ disjoint from $\De$ increases the connectivity of $\BB(S,\De_0,\De_1)$ by one. 
\end{lem}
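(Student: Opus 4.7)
The plan is analogous to Lemma~\ref{lemA} and Lemma~\ref{case3}: I will exhibit $\BB(S,\Delta_0,\Delta_1)$ as (up to homotopy) a suspension-like construction over $\BB(S'',\Delta_0,\Delta_1)$ where $S''$ is a surface with one fewer boundary component. Let $\del_0 S$ denote the extra boundary component (disjoint from $\Delta$). By hypothesis there is an impure edge $e$ of $\del S$; let its endpoints be $p\in\Delta_0$ and $q\in\Delta_1$. Let $I_0$ be the arc from $p$ to $q$ drawn in Figure~\ref{lemmas34}(b), chosen so that $I_0$ together with $e$ cobound an annulus $P\subset S$ whose other boundary circle is $\del_0 S$. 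Cutting $S$ along $I_0$ and discarding $P$ yields a surface $S''$ of genus $g$, with $r-1$ boundary components, the same $r'$ (since $\del_0 S$ carried no $\Delta$-points), and the same pure/impure counts $l,m$ (the edge $e$ is replaced on $\del S''$ by the impure edge $I_0$). By the induction underlying Theorem~\ref{D0D1}, $\BB(S'',\Delta_0,\Delta_1)$ is $(4g+(r-1)+r'+l+m-6)$-connected.

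The first step is to compute $\link(I_0)$ up to homotopy. A simplex in $\link(I_0)$ consists of arcs disjoint from $I_0$; such an arc lies either in $S''$ (giving a copy of $\BB(S'',\Delta_0,\Delta_1)$) or in $P$. The only arcs in $\BB(S,\Delta_0,\Delta_1)$ entirely inside $P$ are the arcs $I_0^{(k)}$ from $p$ to $q$ winding $k$ times around $\del_0 S$ ($k\in\Z\setminus\{0\}$, since $k=0$ gives $I_0$ itself). In the annulus $P$, any two of these with $k\neq k'$ must intersect, so at most one of them appears in each simplex of $\link(I_0)$. This yields an identification
$$\link(I_0)\;\cong\;\{I_0^{(k)}\}_{k\neq 0}\;\ast\;\BB(S'',\Delta_0,\Delta_1),$$
a join with a nonempty discrete set. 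The join formula $\mathrm{conn}(A\ast B)=\mathrm{conn}(A)+\mathrm{conn}(B)+2$, with $\mathrm{conn}(A)=-1$, then gives that $\link(I_0)$ is $(4g+r+r'+l+m-6)$-connected.

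The second step is to show that the subcomplex $X\subset\BB(S,\Delta_0,\Delta_1)$ of simplices not containing $I_0$ is also $(4g+r+r'+l+m-6)$-connected, by deformation retracting $X$ onto $\link(I_0)$ via a surgery retraction in the spirit of Theorem~\ref{A} and Lemma~\ref{lemA}. Given a simplex $\sigma\in X$ represented by arcs in minimal transverse intersection with $I_0$, at each crossing of an arc $a\in\sigma$ with $I_0$ one replaces $a$ by the surgery arc ($L(a)$ or $R(a)$) whose endpoints lie in different $\Delta$-sets and interpolates the barycentric weights continuously through intermediate higher-dimensional simplices, exactly as in the proof of Lemma~\ref{lemA}. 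The result is a retraction onto the subcomplex whose representatives are disjoint from $I_0$, namely $\link(I_0)$.

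Finally, write $\BB(S,\Delta_0,\Delta_1)=\Star(I_0)\cup_{\link(I_0)}X$. Since $\Star(I_0)$ is contractible and both $X$ and $\link(I_0)$ are $(4g+r+r'+l+m-6)$-connected, Mayer--Vietoris (together with van~Kampen for $\pi_1$) gives the claimed connectivity of $\BB(S,\Delta_0,\Delta_1)$. The main obstacle will be step two: verifying well-definedness of the surgery on isotopy classes, choosing $L(a)$ versus $R(a)$ consistently so that only impure arcs appear after surgery, and controlling the trivial arcs or reoccurrences of $I_0$ that surgery can a priori produce, analogously to (but more delicately than) the final paragraph of the proof of Theorem~\ref{A}.
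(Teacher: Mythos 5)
Your Step 1 is already incorrect, and it propagates. Inside the annulus $P$ cobounded by $I_0$, $e$ and $\del_0S$, there is no infinite family $\{I_0^{(k)}\}_{k\neq 0}$: a properly embedded arc in an annulus with both endpoints on the \emph{same} boundary circle has zero mod-$2$ intersection with the core, hence separates $P$ into a disc and an annulus, so it is boundary-parallel. Thus, up to isotopy in $S$, the only arcs of $\BB(S,\De_0,\De_1)$ contained in $P$ are the trivial arc parallel to $e$ and $I_0$ itself, and $\link(I_0)\cong\BB(S'',\De_0,\De_1)$ with no extra join factor. By induction this is only $(4g+(r-1)+r'+l+m-6)$--connected, one less than you claim, so your Mayer--Vietoris step no longer closes. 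Step 2 is worse than delicate: it is false. If $X$ deformation retracted onto $\link(I_0)$, then $\BB(S,\De_0,\De_1)=\Star(I_0)\cup_{\link(I_0)}X\simeq\Star(I_0)$ would be contractible, and it is not in general. For a concrete check, take $S$ an annulus with four points, alternately in $\De_0$ and $\De_1$, on one boundary circle and the other circle free of $\De$: every impure arc with both endpoints on one circle of an annulus is boundary-parallel, so $\BB(S,\De_0,\De_1)$ has exactly four vertices (one ``special'' arc per impure edge), and any two of them intersect; the complex is four isolated points, which is $(-1)$--connected as the lemma asserts, whereas your argument would force it to be connected, indeed contractible. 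In this example $\link(I_0)=\emp$ and $X$ consists of the other three vertices, so no retraction of $X$ onto $\link(I_0)$ can exist; the homology your surgery would have to kill is carried by the stars of the \emph{other} arcs of this kind.

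That is exactly the idea you are missing and which the paper uses. Call an arc \emph{special} if it cuts off from $S$ a cylinder containing $\del_0S$ whose other boundary circle meets $\De_0\cup\De_1$ only at the endpoints of the arc (your $I_0$ is one of typically infinitely many). Any two distinct special arcs must intersect, so no simplex contains two of them, and the link of each is isomorphic to $\BB(S',\De_0,\De_1)$, where $S'$ is $S$ with $\del_0S$ capped off (homeomorphic to your $S''$). Writing $\BB(S,\De_0,\De_1)$ as $(X\cup\Star(I_0))$ glued along the links to the stars of all the remaining special arcs, the paper retracts $X\cup\Star(I_0)$ onto the contractible $\Star(I_0)$ by sliding intersections with $I_0$ across $\del_0S$ (a push move as in Lemma~\ref{lemA}, not a surgery; the only arcs that could become trivial under this slide are special arcs, which are excluded from $X$). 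Van Kampen and Mayer--Vietoris then exhibit the complex, up to homotopy, as a wedge of suspensions of the $d$--connected links, which is where the ``$+1$'' comes from -- not from $X$ or from $\link(I_0)$ being $(d+1)$--connected. If you want to salvage your write-up, replace Step 1 by the correct identification $\link(I_0)\cong\BB(S'',\De_0,\De_1)$, replace Step 2 by the slide-across-$\del_0S$ retraction of $X\cup\Star(I_0)$ onto $\Star(I_0)$, and keep track of \emph{all} special arcs in the decomposition rather than only $I_0$.
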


\begin{proof}
This is a variation on the proof of Lemma~\ref{lemA}. Suppose $\del_0 S$ is a boundary component of $S$ disjoint from $\Delta$, and let
$S'$ be $S\cup_{\del_0S} D^2$. We want to show that $\F(S,\De_0,\De_1)$ is one more connected than $\F(S',\De_0,\De_1)$.
 
Call an arc $I$ of $\F(S,\De_0,\De_1)$ {\em special} if it separates a cylinder from $S$, one of whose boundary components is $\del_0S$ and
the other one intersects $\De_0\cup\De_1$ only at the endpoints of $I$. As any two distinct special arcs must intersect, we have 
$$\F(S,\De_0,\De_1)=X\bigcup_{\begin{subarray}{l}\link(I),\\ I \ \textrm{special}\end{subarray}}\Star(I)$$
where $X$ is the subcomplex of $\F(S,\De_0,\De_1)$ of simplices having no special arc among their vertices. 
Note that the link of any special arc $I$ is isomorphic to $\F(S',\De_0,\De_1)$. 

Pick a special arc $I_0$. We can produce a retraction of $X\cup\Star(I_0)$ onto $\Star(I_0)$. As in the proof of Lemma~\ref{lemA}, we
do this by producing, for any $p$--simplex $\s$ intersecting $I_0$, a sequence of $(p+1)$--simplices $r_1,\dots,r_k$
obtained by moving the intersections of $\s$ with $I_0$ one by one across $\del_0S$ in the way shown in  Figure~\ref{lemmas34}(b).
Passing an arc across $\del_0$ creates a trivial arc only in the case of special arcs (different from $I_0$), 
and these are assumed not to be in $X$. Hence the retraction is well-defined and result follows from van Kampen's
theorem and the Mayer-Vietoris long exact sequence
for the decomposition 
$\F(S,\De_0,\De_1)=\big(X\cup \Star(I_0)\big)\,\bigcup_{\begin{subarray}{l}\link(I),\\ I \ \textrm{special}, I\neq I_0\end{subarray}}\Star(I)$. 
\end{proof}

\begin{figure}[ht]
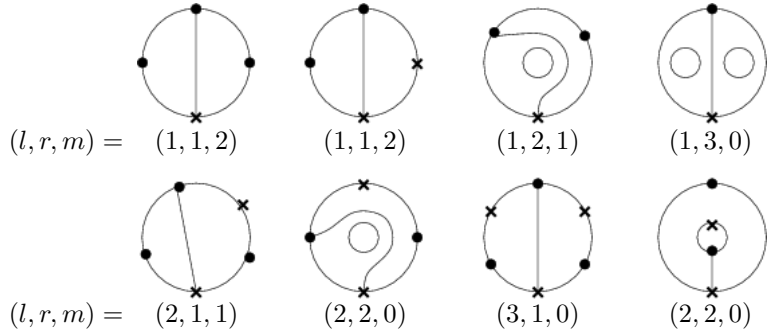

\begin{lpic}{specialcases(0.56,0.56)}
 \lbl[b]{-17,34;$(l,r,m)=$} 
 \lbl[b]{14,34;$(1,1,2)$} 
 \lbl[b]{54,34;$(1,1,2)$} 
 \lbl[b]{95,34;$(1,2,1)$} 
 \lbl[b]{136,34;$(1,3,0)$} 
 \lbl[b]{-17,-7;$(l,r,m)=$}
 \lbl[b]{14,-7;$(2,1,1)$} 
 \lbl[b]{54,-7;$(2,2,0)$}
 \lbl[b]{95,-7;$(3,1,0)$}
 \lbl[b]{136,-7;$(2,2,0)$}    
\end{lpic}
\caption{Verifying non-emptiness}\label{specialcases}
\end{figure}

\begin{proof}[Proof of Theorem~\ref{D0D1}] 
Note first that the theorem is obviously true in the cases where we have already 
shown that $\BB(S,\De_0,\De_1)$ is contractible. The proof will inductively deduce the connectivity of $\BB(S,\De_0,\De_1)$ 
from that of $\A(S,\De_0\cup\De_1)$ in the
cases not already taken care of by the lemmas. 

For $g=0$ and $r'=1$, according to the theorem, non-emptiness occurs when $r+l+m\ge 4$. As $r'=1$,  we have $l\ge 1$. 
If $l=1$, this means either $r=1$ and $m\ge 2$,  $r=2$ and $m\ge 1$, or $r\ge 3$. If $l=2$, we need either $r=1$ and $m\ge 1$ or $r\ge
2$.  
Non-emptiness in these five cases are verified in Figure~\ref{specialcases}.
The result then follows more generally for the case $g=0,r'=1$ and $l=1,2$ by Lemmas~\ref{case2},\ref{case3} 
to change the value of $m$ and Lemma~\ref{case4} to
change the value of $r$. 

We will prove the result in general by induction on the lexicographically ordered triple $(g,r,q)$, where the genus $g\ge 0$, 
the number of boundaries $r\ge 1$ 
 and $q=2l+m\ge 2$ is the cardinality of $\Delta=\De_0\cup\De_1$.
%By Lemma~\ref{case1}, we can assume $l\ge 1$, and   
By the above, we can assume $l\ge 3$ if $g=0$ and $r'=1$. 
For each $(g,q,r)$, it is enough to show the result in the case $r=r'$ (by Lemma~\ref{case4}) and $m=0$ (by Lemmas~\ref{case1},\ref{case2},\ref{case3}), 
as $\BB(S,\De_0,\De_1)$ is non-empty when $r'=1$ and $l\ge 3$ or when $r'\ge 2$ as shown in Figure~\ref{specialcases}. 
The induction starts with 
 $(g,r,q)=(0,1,6)$, where $q=2l$ as $m=0$ (which is non-empty as already checked).

So consider a surface $S$ and a pair of sets $(\De_0,\De_1)$ in $\del S$ with $r=r'$ and  $m=0$ (and $l\ge 3$ if $g=0$ and $r=1$). 
We need to show that $\BB(S,\De_0,\De_1)$ is $(4g+2r+l-6)$--connected.
Let $k\le 4g+2r+l-6$ and 
 $f\colon S^k\to \BB(S,\De_0,\De_1)$ be a map, which we may assume to be simplicial for some PL triangulation of $S^k$ by Theorem~\ref{approx}. 
By Theorem~\ref{A} there is an extension $\hat f$ of $f$ to $\A(S,\De)$: 
$$\xymatrix{S^k \ar[r]^-f \ar@{^{(}->}[d] & \BB(S,\De_0,\De_1) \ar@{^{(}->}[d] \\
D^{k+1} \ar[r]^-{\hat f} & \A(S,\De)
}$$  
Indeed, $\A(S,\De)$ is contractible, unless $g=0$ and $r'=1$ (with $r=r'$ here) in which case we need $4g+2r+l-6\le 2r+q-7$ 
which is clear as $q=2l\ge 6$ in this case. Using Theorem~\ref{approx} again, 
we may moreover assume that $\hat f$ is simplicial with respect to some PL triangulation of $D^{k+1}$ extending the triangulation of $S^k$. 
We are going to inductively deform $\hat f$  so that its image lies in $\BB(S,\De_0,\De_1)$. 

For a simplex $\s$ of $D^{k+1}$, we say that $\s$ is {\em bad} if its image lies in the complement of $\BB(S,\De_0,\De_1)$ in
$\A(S,\De)$,  i.e. if all the vertices of $\s$ are mapped to {\em pure arcs}, arcs not in $\BB(S,\De_0,\De_1)$.
 Cutting $S$ along the arcs of $\hat f(\s)$, we have a decomposition into connected components 
$$(S,\De_0,\De_1)\minus \hat f(\s)=(X_1,\De_0^1,\De^1_1)\sqcup\dots\sqcup (X_c,\De_0^c,\De_1^c)\sqcup (Y_1,\Ga_1)\sqcup \dots\sqcup (Y_d,\Ga_d)$$
where each $\De^i_\epsilon$ is a non-empty set in $\del S$ inherited from $\De_\epsilon$, with $\epsilon=0,1$, and
$\Ga_j$ is a set containing copies of points of 
either $\De_0$ or $\De_1$, i.e. the $Y_j$'s have only points of one type. Here by ``inherited'', we mean that there is a copy of a
point of $\De$ in each $X_i$ or $Y_j$ neighboring it (as in Figure~\ref{OO.cut}).

Let $Y_\s=i_1(Y_1)\cup \dots\cup i_d(Y_d)$, where $i_j\colon Y_j\to S$ is the canonical inclusion ---which is not necessarily injective on $\del Y_j$.  
Note that each component of $Y_\s$ has only points of one type in its boundary. 
We say that $\s$ is {\em regular} if no arc of $\hat f(\s)$ lies inside $Y_\s$. When $\s$ is regular, $Y_\s$ is the disjoint union of the
$Y_j$'s modulo the identification of 
the points $\Ga_j$'s coming from the same point of $\De$. 

Let $\s$ be a regular bad $p$--simplex of $D^{k+1}$ maximal with respect to the ordered pair $(Y_\s,p)$, where 
$(Y_{\s'},p')< (Y_\s,p)$ if $Y_{\s'}\varsubsetneq Y_\s$ with $\del Y_{\s'}\minus \del Y_\s$ a union of non-trivial arcs in $Y_\s$, or if $Y_{\s'}=Y_\s$ and $p'<p$. 
The map $\hat f$ restricts on the link of such a simplex $\s$ to a map 
$${\rm Link}(\s) \to J_\s=\BB(X_1,\De^1_0,\De_1^1)*\dots * \BB(X_c,\De^c_0,\De_1^c)* \A(Y_1,\Ga_1)* \dots * \A(Y_d,\Ga_d).$$
Indeed, if a simplex $\tau$ in the link of $\s$ maps to a simplex of pure arcs of $X_i$ for some $i$, then $Y_\s\subset Y_{\tau *\s}$ and either 
$Y_{\tau *\s}>Y_\s$, or $Y_{\tau *\s}=Y_\s$ and $\s$ was not of maximal dimension. (On the other hand arcs of $\A(Y_j,\Ga_j)$ 
could not be added to $\s$ by regularity.)

As the triangulation of $D^{k+1}$ is a PL triangulation, we have  ${\rm Link}(\s)\cong S^{k-p}$ (see Appendix). 
We will show now that $J_\s$ is at least $(k-p)$--connected. 

In the case when one of the $Y_j$'s has non-zero genus or at least two boundary components, $\A(Y_j,\Ga_j)$ is
contractible by Theorem~\ref{A} (as each boundary of $Y_j$ has points of $\Ga_j$) and hence so is
$J_\s$ by Proposition~\ref{joinconn}.  %In the remaining cases, we want to calculate the connectivity of $J_\s$. 
So we are left to consider the case that all the $Y_j$'s have genus 0 and one boundary, i.e. $Y_j$ is a disc for each $j$.  

We have $\chi(S\minus \hat f(\s))=\chi(S)+p'+1$, where $p'+1\le p+1$ is the number of distinct arcs in the image of $\s$. 
If $X_i$ has genus $g_i$ and $r_i=r'_i$ boundaries, the above equation gives 
$\sum_{i=1}^c (2-2g_i - r_i) + d= 2-2g-r+p'+1$ or equivalently 
$$\textstyle{\sum_{i=1}^c} (2g_i + r_i)=2g+r-p'+2c+d-3.$$
Moreover, we have 
$$\textstyle{\sum_{i=1}^{c}}m_i+\textstyle{\sum_{j=1}^d} q_j=2p'+2,$$ 
where $m_i$ is the number of pure edges in $X_i$ and $q_j=|\Ga_j|$,  and $\sum_{i=1}^{c}l_i=l$,
where $l_i$ is half the number of impure edges in $X_i$.

By induction, $\BB(X_i,\De_0^i,\De_1^i)$ is $(4g_i+2r_i+l_i+m_i-6)$--connected. Indeed for each $i$, we have 
$(g_i,r_i,q_i)<(g,r,q)$.  
On the other hand, $\A(Y_j,\Ga_j)$ is  $(q_j -5)$--connected by Theorem~\ref{A}.  
Applying Proposition~\ref{joinconn},  we get \\
${\rm Conn}(J_\s)\ge \sum_{i=1}^c (4g_i + 2r_i + l_i + m_i-4) + \sum_{j=1}^d (q_j -3)-2$

$=4g+2r-2p'+4c+2d-6+l+2p'+2-4c-3d-2$

$=4g+2r+l-d-6\ge 4g+2r+l-p-6\ge k-p$\\
because $3d\le p+1$, and hence $d\le p$, 
as the edges of $Y_j$ are arcs of $\s$ (as we assumed $m=0$), 
minimum three edges are needed for a non-trivial $Y_j$ and the same edge cannot be used  
twice in the $Y_j$'s by regularity of $\s$. 

Hence in all cases, there exists a PL $(k-p+1)$--disc $K$ with $\del K=\link(\s)$, and a simplicial map
$F\colon K\to J_\s\inc\A(S,\Delta)$.   

Now we have $\Star(\s)=\s*\link(\s)$ is a $(k+1)$--disc with boundary $\del\s*\link(\s)$ (see comments after Lemma~\ref{D*S}). In the triangulation of
$D^{k+1}$, we replace that disc with the disc $\del\s* K$ which has same boundary, and we 
modify $\hat f$ in the interior of the disc using the map 
$$\hat f * F\ \colon \ \del \s * K \ \rar\  \A(S,\De)$$
which agrees with $\hat f$ on $\del(\del\s*K)=\del\s*\link(\s)$.   
We are left to show that we have improved the 
situation this way. The new simplices are of the form $\tau=\al * \beta$ with $\al$ a proper face of $\s$ and $\beta$ mapping to $J_\s$. Suppose $\tau$ is 
a regular bad simplex. Then each arc of $\beta$ is pure and hence in $\A(Y_j,\Ga_j)$ for some $j$. Thus $Y_\tau\subseteq Y_\s$. If they are 
equal, we must have $\tau=\alpha$ is a face of $\s$ (by regularity of $\tau$ and $\s$). 
So $(Y_\tau,\dim(\tau))<(Y_\s,p)$. Hence we have reduced the number of regular bad simplices of maximal dimension. As
any bad simplex contains a regular bad subsimplex, the result follows by induction. 
\end{proof}

We use now the connectivity of $\BB(S,\De_0,\De_1)$ to deduce that of the subcomplex $\BB_0(S,\De_0,\De_1)$ of
non-separating simplices.

\begin{thm}\label{BX}
If $\De_0,\De_1$ are two disjoint non-empty sets of points in $\del S$, then the complex
 $\BB_0(S,\De_0,\De_1)$ is $(2g+r'-3)$--connected, for $g$ and $r'$ as above.
\end{thm}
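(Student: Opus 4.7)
The plan is to deduce this from Theorem~\ref{D0D1} by a bad-simplex surgery argument in the style of that proof, inducting on the lexicographic complexity $(g,r,|\De_0\cup\De_1|)$ of the data. Base cases (small genus and boundary data) are handled directly: the statement is vacuous when $2g+r'-3<-1$, and otherwise $\BB_0(S,\De_0,\De_1)$ is non-empty by exhibiting an explicit non-separating arc between $\De_0$ and $\De_1$, as in Figure~\ref{specialcases}.

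For the inductive step, let $k\le 2g+r'-3$ and $f\colon S^k\to \BB_0(S,\De_0,\De_1)$ be simplicial for some PL triangulation. Since Theorem~\ref{D0D1} gives that $\BB(S,\De_0,\De_1)$ is $(4g+r+r'+l+m-6)$-connected, and this bound exceeds $k$ in the relevant inductive range, $f$ extends to a simplicial map $\hat f\colon D^{k+1}\to\BB(S,\De_0,\De_1)$ after possibly refining the triangulation as in Theorem~\ref{approx}. I would then modify $\hat f$ rel $\partial D^{k+1}$ so its image ultimately lies in $\BB_0$.

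Call a simplex $\sigma$ of $D^{k+1}$ \emph{bad} if $\hat f(\sigma)$ separates $S$; badness is upward-closed in $D^{k+1}$. Select a bad $\sigma$ whose image $\hat f(\sigma)=\langle a_0,\dots,a_p\rangle$ is \emph{minimally bad} in $\BB$, meaning every proper face is non-separating. Then $S\setminus \hat f(\sigma)=X\sqcup Y$ has exactly two connected components, each arc $a_i$ having one side in each, and $\hat f(\partial\sigma)\subseteq \BB_0$. Since any arc disjoint from $\hat f(\sigma)$ lies entirely in $X$ or entirely in $Y$, the link $\link(\sigma)\cong S^{k-p}$ maps into
\[
\link_{\BB}(\hat f(\sigma))\ =\ \BB(X,\De_0^X,\De_1^X)\ast\BB(Y,\De_0^Y,\De_1^Y).
\]
Combining the Euler-characteristic identity $\chi(X)+\chi(Y)=\chi(S)+p+1$ with Theorem~\ref{D0D1}, the inductive hypothesis applied to $X$ and $Y$, and Proposition~\ref{joinconn}, one obtains a lower bound on the connectivity of this join.

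The replacement step is to pick a non-separating arc $b$ in $S$ disjoint from $\hat f(\sigma)$, with endpoints in suitable inherited subsets (e.g.\ one in $\De_0^X$ and one in $\De_1^Y$), so that $\hat f(\sigma)\ast b\in\BB_0$; such $b$ exists because both $\De_0$ and $\De_1$ are non-empty and meet both components (after possibly regularizing $\sigma$ as in the proof of Theorem~\ref{D0D1}). One then fills $\link(\sigma)$ by a PL disc into the star of $b$ in $\BB_0$, and replaces $\Star(\sigma)=\sigma\ast\link(\sigma)$ by $\partial\sigma\ast(\text{new fill})$, which is a new $(k+1)$-ball with the same boundary, now mapping entirely into $\BB_0$. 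This removes $\sigma$ from the list of bad simplices; iterating on a well-founded complexity, for instance $(\dim\hat f(\sigma),\dim\sigma)$ lexicographically, exhausts all bad simplices.

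The principal obstacle is arithmetic bookkeeping: verifying that the join-connectivity estimate, combined with induction, always delivers at least the $(k-p)$-connectivity needed to fill $\link(\sigma)$ together with the extra arc $b$, and that the surgery strictly decreases the chosen complexity so the iteration terminates — exactly the kind of careful inductive accounting, together with a suitable notion of regularity, that was required in the proof of Theorem~\ref{D0D1}.
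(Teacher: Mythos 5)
There is a genuine gap, and it sits exactly in the step you defer as ``bookkeeping'': the replacement move itself. If $\hat f(\sigma)$ is separating, then \emph{no} arc $b$ disjoint from $\hat f(\sigma)$ can make $\hat f(\sigma)\ast b$ lie in $\BB_0$: adding a further disjoint arc only cuts the (already disconnected) complement further and can never reconnect it, which is precisely why $\BB_0$ is closed under passing to faces. So the object you propose to cone towards does not exist. Moreover, even setting that aside, filling $\link(\sigma)$ by a disc mapping into ``the star of $b$ in $\BB_0$'' is not compatible with the modification $\Star(\sigma)\rightsquigarrow\del\sigma\ast(\text{new fill})$: the new simplices have the form $\hat f(\al)\ast F(\be)$ with $\al$ a face of $\sigma$, so $F$ must land in systems of arcs disjoint from $\hat f(\del\sigma)$ (a subcomplex of the link of $\hat f(\sigma)$, i.e.\ the join of complexes on the cut pieces), not in an arbitrary star in $\BB_0$. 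Finally, your termination measure $(\dim\hat f(\sigma),\dim\sigma)$ is not shown to decrease: after the modification, simplices $\al\ast\be$ can be separating and of larger dimension than $\sigma$, and nothing in your setup rules out creating new ``minimally bad'' simplices that are not faces of $\sigma$.

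The paper's proof resolves exactly these points by a different choice of bad-simplex notion and fill target. It calls $\sigma$ \emph{regular bad} if every arc $a_j$ of $\hat f(\sigma)$ separates the complement of the remaining arcs (not: the whole system separates with non-separating faces), takes such a $\sigma$ of maximal dimension, and observes that its link maps into the join $\BB_0(X_1,\De_0^1,\De_1^1)\ast\dots\ast\BB_0(X_c,\De_0^c,\De_1^c)$ of the \emph{non-separating} complexes of the cut pieces. The connectivity of this join is estimated by the inductive hypothesis on the pieces (each $(2g_i+r_i'-3)$--connected) together with the Euler-characteristic identity and Proposition~\ref{joinconn}, giving $(2g+r'-p'+c-5)$--connectivity with $c\ge 2$, which suffices to fill $\link(\sigma)\cong S^{k-p}$. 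Because the fill lands in the $\BB_0$'s of the pieces, any new regular bad simplex $\al\ast\be$ forces $\be$ to be trivial (its arcs do not separate $S\minus\hat f(\al)$), hence is a face of $\sigma$; this is what makes the count of maximal-dimensional regular bad simplices strictly decrease and the induction terminate. You do not need an auxiliary arc $b$ at all; the correct strategy is not to land in $\BB_0$ in one step but to reduce a carefully chosen class of bad simplices, and your weaker notion of badness together with the star-of-$b$ target is where the argument breaks.
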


\begin{proof}
We prove the theorem as the previous one by induction on the lexicographically ordered triple $(g,r,q)$, 
where $r\ge r'$ is the number of components of $\del S$ and $q=|\De_0\cup
\De_1|\ge 2$.  
To start the induction, note that the theorem is true when $g=0$ and $r'\le 2$ for any $r\ge r'$ and any $q$, and 
more generally that the complex is non-empty whenever $r'\ge 2$ or $g\ge 1$.

So fix $(S,\De_0,\De_1)$ satisfying $(g,r,q)\ge(0,3,2)$. 
Then $2g+r'-3\le 4g+r+r'+l+m-6$. Indeed, $r\ge 1$ and $l+m\ge 1$. Moreover we assumed that either $r\ge 3$ or $g\ge 1$. 

Let $k\le 2g+r'-3$ and consider a map $f\colon S^k\to \BB_0(S,\De_0,\De_1)$, which we may assume to be simplicial for some PL triangulation of $S^k$ (by
Theorem~\ref{approx}).  This map can be extended to a simplicial map $\hat f\colon D^{k+1}\to \BB(S,\De_0,\De_1)$ by 
Theorems~\ref{D0D1} and the above calculation, for a PL triangulation of $D^{k+1}$ extending that of $S^k$, using again Theorem~\ref{approx}. 
We call a simplex $\s$ of $D^{k+1}$ {\em regular bad} if $\hat f(\s)=\lgl a_0,\dots,a_p\rgl$ and each $a_j$ separates
$S\minus(a_0\cup\dots\widehat{a_j}\dots\cup a_p)$. 
Let $\s$ be a regular bad simplex of maximal dimension $p$. Write $S\minus\hat f(\s)=X_1\sqcup\dots\sqcup X_c$ with each $X_i$ connected. 
By maximality of $\s$, $\hat f$ restricts to a map 
$${\rm Link}(\s)\rar J_\s=\BB_0(X_1,\De_0^1,\De_1^1)*\dots * \BB_0(X_c,\De_0^c,\De_1^c)$$
where each $\De^i_\epsilon$  is inherited from $\De_\epsilon$ and is non-empty as the arcs of $\hat f(\s)$ are impure. Each $X_i$  has
$(g_i,r_i,q_i)<(g,r,q)$, so by induction 
$\BB_0(X_i,\De_0^i,\De_1^i)$ is $(2g_i+r'_i-3)$--connected. The Euler characteristic gives $\sum_i(2-2g_i-r_i)=2-2g-r+p'+1$, where $p'+1\le p+1$ is the
number of arcs in $\hat f(\s)$. We also have $\sum_i(r_i-r'_i)=r-r'$, so $\sum (2g_i+r'_i)=2g+r'-p'+2c-3$. 
Now  $J_\s$ is 
$(\sum_i(2g_i+r'_i-1)-2)$--connected (using Proposition~\ref{joinconn}), that is $(2g+r'-p'+c-5)$--connected. As $c\ge 2$ and $p'\le p$, 
we can extend the restriction of $\hat f$ to $\link({\s})\simeq S^{k-p}$ to a map $F\colon K\to J_\s$ with $K$ a $(k-p+1)$--disc with boundary
the link of $\s$. 
We modify $\hat f$ on the interior of the star of $\s$ using $\hat f * F$  on $\del \s * K\simeq \Star(\s)$ as in the proof of Theorem~\ref{D0D1}. 
If a simplex $\al * \beta$ in $\del\s * K$ is regular bad, $\beta$ must be trivial since $\beta$ does not separate $S\minus\hat f(\al)$, so that $\al * \beta=\al$ is a face of $\s$. We have thus  reduced the number of regular bad simplices of maximal dimension and the result follows by induction. 
\end{proof}

We are now (finally!) ready to prove that the ordered complex $\OO(S,b_0,b_1)$ has the connectivity claimed: 

\begin{thm}[Proposition~\ref{I4}]\label{OS}
$\OO(S,b_0,b_1)$ is $(g-2)$--connected. 
\end{thm}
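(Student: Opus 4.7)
The plan is to mirror the combinatorial induction used in the proof of Theorem~\ref{BX}, deducing the $(g-2)$-connectivity of $\OO(S,b_0,b_1)$ from the $(2g+r'-3)$-connectivity of $\BB_0(S,\{b_0\},\{b_1\})$. We proceed by induction on $g$. Given a simplicial map $f \colon S^k \to \OO(S,b_0,b_1)$ with $k \le g-2$ (and $S^k$ given some PL triangulation by Theorem~\ref{approx}), Theorem~\ref{BX} provides a simplicial extension $\hat f \colon D^{k+1} \to \BB_0(S,\{b_0\},\{b_1\})$ for some PL triangulation of $D^{k+1}$ refining the given one on $S^k$: this extension exists because $g-2 \le 2g+r'-3$ whenever $g \ge 0$ and $r' \ge 1$. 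The task is then to inductively deform $\hat f$ so that its image lies in $\OO$.

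Call a simplex $\s$ of $D^{k+1}$ \emph{bad} if $\hat f(\s) \in \BB_0 \setminus \OO$, that is, if the anticlockwise ordering of the arcs of $\hat f(\s)$ at $b_0$ disagrees with the clockwise ordering at $b_1$. Observe that the induced orderings of a subcollection of arcs are inherited from the ambient orderings at $b_0$ and $b_1$, so if $\s$ is bad then every simplex containing $\s$ is also bad; equivalently, goodness is preserved by taking faces, and it suffices to focus on minimal bad simplices. Take such a minimal bad simplex $\s$ of maximal dimension $p$, and cut $S$ along the arcs of $\hat f(\s) = \lgl a_0,\dots,a_p\rgl$, obtaining a connected surface $S_\s$ (since $\hat f(\s)$ is in $\BB_0$) whose boundary contains $2(p+1)$ new copies of $b_0, b_1$. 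The mismatch of the orderings determines how these copies regroup among the new boundary components of $\del S_\s$.

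By maximality of $\s$, the restriction $\hat f|_{\link(\s)} \colon \link(\s) \cong S^{k-p} \to \BB_0$ factors through a subcomplex $J_\s \subseteq \OO(S,b_0,b_1)$ that can be identified as a join of $\OO^1$- and $\OO^2$-complexes on the pieces of $S_\s$ regrouped by the matched pairs of copies of $b_0, b_1$. Each such factor is an $\OO$-type complex on a surface of genus strictly less than $g$, so by induction is at least $(g_i-2)$-connected. An Euler-characteristic count analogous to that in Theorem~\ref{BX} combined with Proposition~\ref{joinconn} shows $J_\s$ is at least $(k-p)$-connected. Fill $\link(\s)$ with a simplicial map $F \colon K \to J_\s$ on a $(k-p+1)$-disc $K$, and replace $\hat f$ on $\Star(\s) = \s * \link(\s)$ by $\hat f|_{\del\s} * F$ on $\del \s * K$. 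New simplices have the form $\al * \be$ where $\al$ is a proper face of $\s$ (hence good by minimality of $\s$) and $\be$ maps into $J_\s \subseteq \OO$ (hence good), so no new bad simplex of dimension $\ge p$ is introduced. The count of minimal bad simplices of maximal dimension strictly decreases, and the induction closes.

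The main obstacle is the precise identification of $J_\s$ and the verification of its connectivity bound. Unlike in $\BB_0$, where cutting has a transparent effect on the boundary structure, the $\OO$-ordering is a global constraint, and one must carefully analyze how the mismatched orderings at $b_0$ and $b_1$ partition the new boundary components of $S_\s$ into pieces each carrying its own pair of distinguished boundary points. Showing that $J_\s$ really is a join of $\OO^1$- and $\OO^2$-complexes, and checking via Euler characteristic that $\sum_i (2g_i + r'_i)$ is large enough (relative to $p$) for the inductive hypothesis to yield $(k-p)$-connectivity, is the delicate step inherited from the argument of Randal-Williams in \cite[Thm.~A.1]{RW09}.
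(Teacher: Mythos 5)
Your skeleton (extend $f$ over $D^{k+1}$ into $\BB_0(S,\{b_0\},\{b_1\})$ via Theorem~\ref{BX}, then push bad simplices off by induction on genus) is the paper's, but the heart of the argument is missing and the version you propose would not run. First, your notion of badness gives you no leverage: since any simplex of $\BB_0\minus\OO$ contains an inverted pair of arcs, your ``minimal bad simplices'' are just the edges with inverted image, and maximality among these tells you nothing about the link --- a simplex $\tau*\s$ with $\tau\in\link(\s)$ is bad but not minimal bad, so no contradiction constrains $\hat f(\tau)$, and the claimed factorization of $\hat f|_{\link(\s)}$ through a nice complex $J_\s$ is simply unjustified. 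Moreover, because the image lies in $\BB_0$, the cut surface $S\minus\hat f(\s)$ is \emph{connected}: there are no ``pieces'' and no join of $\OO^1$/$\OO^2$-complexes; that structure belongs to Theorems~\ref{D0D1} and \ref{BX} and does not transfer. The paper instead calls $\s$ \emph{purely bad} when the good initial segment $\s^g$ (taken in the anticlockwise order at $b_0$) is empty, and takes $\s$ maximal in $D^{k+1}$ with purely bad image; maximality then forces every arc in the image of $\link(\s)$ to enter the sector adjacent to the boundary at both $b_0$ and $b_1$ and to be consistently ordered (otherwise adjoining it to $\hat f(\s)$ would again be purely bad), so the link maps into the single complex $\OO(S\minus\hat f(\s),b_0',b_1')$ with the carefully chosen new basepoints.

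Second, the step you defer --- ``an Euler-characteristic count shows $J_\s$ is $(k-p)$-connected'' --- is precisely the crux, and it is not a join computation but a genus estimate that uses badness itself: a purely bad $p'$-simplex contains two arcs ordered the same way at $b_0$ and $b_1$, so when $b_0,b_1$ lie on the same boundary component the first cut does not increase the number of boundary components and the genus of $S\minus\hat f(\s)$ is at least $g-p'$. (A good simplex can drop the genus by $p'+1$, and with that loss the induction $k-p\le g-p-2\le \tilde g-2$ would not close.) Finally, your closing bookkeeping is also flawed: goodness is not preserved under joins, so ``$\al$ good and $\be$ good'' does not make $\al*\be$ good. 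In the paper the new simplices are controlled because $\be$ maps into $\OO(S\minus\hat f(\s),b_0',b_1')$, whose arcs precede all arcs of $\s$ at both endpoints, whence any new purely bad simplex is a face of $\s$ of strictly smaller dimension; the decreasing quantity is the number of maximal-dimensional purely bad simplices, not your count of minimal bad ones.
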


\begin{proof}
Note first that the result is true for $S$ of genus $g=0$ or $1$. We prove the proposition by induction on $g$, and 
we may assume $g\ge 2$. 

Let $k\le g-2$ and suppose we have a simplicial map $f\colon S^k\to \OO(S,b_0,b_1)$. 
As $2g+r'-3\ge g-2$, Theorem~\ref{BX} implies that there exists an extension $\hat f$ of $f$
to the disc $D^{k+1}$ with image in $\BB_0(S,\{b_0\},\{b_1\})$, 
and we may assume that $\hat f$ is simplicial by Theorem~\ref{approx}. 
As in the last two proofs, we want to modify $\hat f$ so that its image lies in $\OO(S,b_0,b_1)$. 

For a simplex $\s$ of $\BB_0(S,\{b_0\},\{b_1\})$, with $\s=\lgl a_0,\dots,a_p\rgl$ such that the arcs are ordered $a_0<a_1<\dots<a_p$ in the
anti-clockwise ordering at $b_0$, we can write uniquely
$\s=\s^g*\s^b$ where $\s^g=\lgl a_0,\dots,a_i\rgl$ with $i$ maximal such that the clockwise order at $b_1$ 
starts with $a_0<\dots<a_i$. Thus if $\s=\s^g$, it is a simplex of $\OO(S,b_0,b_1)$. We say that
$\s$ is {\em purely bad} if $\s^g$ is empty. (Note that vertices are always good.)

Let $\s$ be a purely bad $p$--simplex. We claim that the genus of $S\minus\s$ is at least $g-p$. 
If $b_0$ and $b_1$ lie on different boundary components,
this is true regardless of the fact that the simplex is bad:  Cutting along the first arc of $\s$ reduces the number
of boundary components of $S$ without affecting the genus, and subsequent arcs can at most each reduce the genus by one
(by Euler characteristic considerations). 
On the other hand, if $b_0$ and
$b_1$ are in the same boundary component, this is true because $\s$ is bad. 
Indeed, suppose that two arcs $a_i,a_j$ of $\s$ are ordered clockwise both at $b_0$
and at $b_1$. Then their complement $S\minus(a_i\cup a_j)$ is a surface with the same number of boundaries as $S$ and thus
of genus $g-1$ (again because of the Euler characteristic).
The remaining $p-1$ arcs of $\s$ can each reduce the genus by at most one. 

Now we want to remove purely bad simplices from the image of $\hat f$. Let $\s$ be a maximal simplex of $D^{k+1}$ such that $\hat
f(\s)$ is purely bad. As $\s$ is maximal with that property, the link of $\s$ is mapped to $\OO(S\minus\hat f(\s),b_0',b_1')$, where
$b_0'$ and $b_1'$, as shown in Figure~\ref{badorder}, are the copies of $b_0$ and $b_1$ lying between the boundary containing $b_0$ and the first arc of $\s$
at $b_0$ (in the anticlockwise ordering), and between the boundary containing $b_1$ and
the first arc at $b_1$ (in the clockwise ordering).
\begin{figure}[ht]
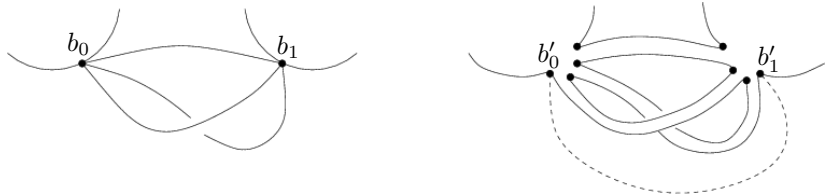
 
\begin{lpic}{badorder(0.45,0.45)}
 \lbl[b]{21,41;$b_0$}
 \lbl[b]{83,40;$b_1$}
 \lbl[b]{160,37;$b_0'$}
 \lbl[b]{225,36;$b_1'$}
\end{lpic}
\caption{Purely bad 2--simplex, and vertex in its link/complement}\label{badorder}
\end{figure}
If $\s$ is a $p$--simplex, $\hat f(\s)$ is a $p'$--simplex with $p'\le p$ and, by the above, 
$S\minus \hat f(\s)$ has genus at
least $g-p'\ge g-p$. The link of $\s$ is a sphere of dimension $k-p$. As $k\le g-2$, we have $k-p\le g-p-2$. 
As $\hat f(\s)$ is bad, $p'\ge 2$ and $S\minus \hat f(\s)$ has genus $\tilde g<g$.  By
induction, the restriction
of $\hat f$ to the link of $\s$ extends to a map $F$ on a  $(k+1-p)$--disc $K$. We replace $\hat f$ 
on $\Star(\s)\simeq \del\s*K$ by $\hat f*F$. 
The new purely bad simplices in the image are faces of $\s$ and hence of smaller dimension. 
The result follows by induction.    
\end{proof}

\section{Closed surfaces}\label{closedsect}

In this section we prove Theorem~\ref{mainclosed}, the stability theorem for surfaces without boundary components. 
The idea, going back to Ivanov \cite{Iva93}, is to build two spectral sequences computing the homology of $\Ga_{g,1}$
and $\Ga_{g,0}$ respectively, so that both sequences have terms involving only mapping class groups of surfaces with
boundaries. Then the map $\delta_g\colon \Ga_{g,1}\to\Ga_{g,0}$ induced by gluing a disc, on the spectral sequences, 
can be identified as the left inverse of the map $\beta$ already computed. We can this way verify that we have  
an isomorphism in a range using the stability theorem for mapping class groups of surfaces with
boundaries. 

Ivanov used the complex of
non-separating curves in a surface. This approach requires: (1) to compute the connectivity of the complex of curves and
(2) handle stabilizers of curves, which are not as well-behaved as stabilizers of arcs. The connectivity can be
deduced from that of the arc complex (see Harer~\cite{Har85}---there is also an unpublished improved argument by
Hatcher-Vogtmann, and an alternative argument by Ivanov \cite{Iva87}). 
Elements in the stabilizer of a circle may rotate the circle, permute circles in a higher simplex and even
flip a circle. This can be
dealt with using appropriate group extensions and comparing associated spectral
sequences (see \cite{Iva93}). 

We will instead describe here Randal-Williams' approach, which is simpler in addition to giving a slightly
better range, though it will require working with the topological group of diffeomorphisms rather than with mapping
class groups, and with semi-simplicial spaces instead of simplicial complexes. 
We start by briefly introducing the language of semi-simplicial spaces.

\medskip

A {\em semi-simplicial space} $X_\bullet$ is a sequence of topological spaces $\{X_p\}_{p\ge 0}$ together with boundary maps 
$d_i\colon X_p\to X_{p-1}$ for each $0\le i\le p$, satisfying the simplicial identity $d_id_j=d_{j-1}d_i$ if $i<j$. 
(So a semi-simplicial space is a
simplicial space without degeneracies.) The space  $X_p$ is the space of $p$--simplices. 

We can define the realization $||X_\bullet||$ of a semi-simplicial space $X_\bullet$ like that of a simplicial set or
simplicial complex by associating a  topological
$p$--simplex $\De^p$ to each  $p$--simplex of $X$: 
$$||X_\bullet||:=\coprod_{p\ge 0}X_p\x \De^p/_\sim$$
with the equivalence given by the face relations $(d_ix,t)\sim (x,d^it)$, where for $t=(t_0,\dots,t_{k-1})\in\De^{k-1}$, with $\sum
t_i=1$, $d^it=(t_0,\dots,t_{i-1},0,t_i,\dots,t_{k-1})$. 

A simplicial space defines a double chain complex with set of $(p,q)$--chains $C_{p,q}(X_\bullet)=C_q(X_p)$, the singular
chains of $X_p$, with vertical differential
$d^V=d_{X_p}$, the differential of $X_p$, and horizontal differential $d^H=\sum_{i=0}^p(-1)^id_i$, the simplicial
differential.  Then $$H_*\big(C_{*,*}(X_\bullet)\,,\,d^H\!\!+\!(-1)^pd^V\big)=H_*(||X_\bullet||).$$ 
We will use below the spectral sequence associated
to the vertical filtration of the double complex, which has $E^1$--term 
$$E^1_{p,q}=H_q(X_p)$$
and converges to the homology of $||X_\bullet||$. 

\smallskip

Let $r\ge 1$ and let  $\del_0S,\dots,\del_{k+r-1} S$ denote the boundary components of the surface $S_{g,k+r}$.  
For $0\le i\le k$, define $d_i\colon \Ga_{g,k+r}\to \Ga_{g,k-1+r}$ to be the map that glues a disc on $\del_iS$. 
These maps make 
$$\xymatrix{B\Ga_{g,\bullet+r}=& \dots\ar@{:}[r] \ar@<1ex>[r] \ar@<-1ex>[r] &  B\Ga_{g,2+r}\ar[r] \ar@<1ex>[r] \ar@<-1ex>[r]& B\Ga_{g,1+r}\ar@<.5ex>[r] \ar@<-.5ex>[r] &  B\Ga_{g,r}}$$
into a semi-simplicial space. (To be precise here, one needs to choose specific compatible identifications of $S_{g,k+r}$ with a disc
glued on its $i$th boundary with $S_{g,k+r-1}$. This is can be done by choosing an appropriate decomposition of the surface into
discs---by linearization, a diffeomorphism from the disc to itself is determined up to homotopy by what
it does on the boundary.)

\begin{thm}\cite{RW09}\label{resolution}
For any $r\ge 0$, we have  $||B\Ga_{g,\bullet+r+1}||\simeq B\Dif(S_{g,r})$.
\end{thm}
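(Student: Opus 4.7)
The strategy is to realize both sides as Borel constructions of a common semi-simplicial space of parametrized disc configurations, reducing the claim to contractibility of an auxiliary realization.

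To set up, introduce the semi-simplicial space $E_\bu$ whose space of $p$-simplices $E_p$ is the space of ordered $(p+1)$-tuples $(\phi_0,\dots,\phi_p)$ of smooth embeddings $D^2\inc\operatorname{int}(S_{g,r})$ with pairwise disjoint images, and where $d_i$ forgets $\phi_i$. Let $G:=\Dif(S_{g,r}\ \mathrm{rel}\ \del)$ act on $E_p$ by post-composition. By parametrized isotopy extension, $G$ is transitive on $\pi_0(E_p)$ and the orbit map at $\phi=(\phi_0,\dots,\phi_p)$ is a Serre fibration whose fiber is $\Dif(S_{g,r}\ \mathrm{rel}\ \del\cup\bigcup_i\phi_i(D^2))$. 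Cutting out the interiors of the discs identifies this stabilizer with $\Dif(S_{g,r+p+1}\ \mathrm{rel}\ \del)$, a group with contractible path components, so the level-$p$ Borel construction $(E_p)_{hG}$ is weakly equivalent to $B\Ga_{g,r+p+1}$. A direct check shows that forgetting the $i$-th disc on the Borel construction corresponds to the disc-gluing map $d_i\colon B\Ga_{g,r+p+1}\rar B\Ga_{g,r+p}$, so that $(E_\bu)_{hG}$ coincides, as a semi-simplicial space, with $B\Ga_{g,\bu+r+1}$.

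Now the Borel construction sits in a (quasi-)fibration of semi-simplicial spaces $E_\bu\rar (E_\bu)_{hG}\rar BG$, and since geometric realization preserves such fibrations, one obtains a fibration
$$ ||E_\bu||\ \rar\ ||B\Ga_{g,\bu+r+1}||\ \rar\ BG\ =\ B\Dif(S_{g,r}), $$
so the theorem reduces to the assertion that $||E_\bu||$ is weakly contractible.

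The main obstacle is precisely this last step. The natural attempt is to produce an extra degeneracy $s\colon E_p\rar E_{p+1}$ with $d_0 s=\mathrm{id}$, i.e.\ a continuous rule for appending one more disjoint parametrized disc to any configuration; pointwise such a disc always exists, since $\operatorname{int}(S_{g,r})$ has ample room off the image of any fixed compact configuration, but making the choice continuous in the configuration is delicate. A standard remedy is to resolve $E_\bu$ by a bi-semi-simplicial space in which the new direction records the choice of an ``extra'' disc lying in an arbitrarily small tubular neighborhood of a chosen smooth arc in $\operatorname{int}(S_{g,r})$: this auxiliary direction admits an honest extra degeneracy, hence has contractible realization, and a standard spectral sequence comparison between the two simplicial directions then forces $||E_\bu||$ to be contractible as well. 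Combined with the previous paragraph, this yields $||B\Ga_{g,\bu+r+1}||\simeq B\Dif(S_{g,r})$.
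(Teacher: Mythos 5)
The first half of your argument --- identifying the levelwise Borel constructions $(E_p)_{hG}$ with $B\Ga_{g,r+p+1}$ via isotopy extension and Earle--Schatz, and fibering $||(E_\bu)_{hG}||$ over $B\Dif(S_{g,r})$ with fibre $||E_\bu||$ --- is essentially the paper's own reduction (the paper uses framed points instead of parametrized discs, which is equivalent), so the theorem does come down to the weak contractibility of $||E_\bu||$. The genuine gap is in how you propose to prove that contractibility. The bi-semi-simplicial remedy does not work: for any fixed smooth arc in the interior and any fixed neighbourhood $U$ of it, there are configurations --- indeed a single embedded disc, namely a closed tubular neighbourhood of the arc containing $\bar U$ --- whose image covers $U$. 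Over such a point of $E_p$ there is no extra disc in $U$ disjoint from the configuration, so the fibres of the auxiliary augmentation $||E_{p,\bu}||\to E_p$ can be empty; the augmentation is not even surjective, the claimed extra degeneracy (which would have to append such a disc compatibly with all faces) does not exist, and no spectral-sequence comparison can then identify $||E_{\bu,\bu}||$ with $||E_\bu||$. Allowing the neighbourhood to shrink with the configuration does not help, since a single disc of the configuration can contain the whole arc in its interior.

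What saves the argument when $r>0$ is the boundary, not an interior arc: every configuration can be pushed canonically off an $\epsilon$-collar of a boundary component (a move with no interior analogue, since discs may lie over the arc and there is no preferred side to push them to), after which one cones off with a fixed framed point or disc inside the collar; this is exactly the paper's proof of contractibility for $r>0$. For $r=0$ --- the case actually needed for Theorem~\ref{mainclosed} --- even this fails, and the paper uses a genuinely different idea: the levelwise cofibration $\Conf(\bu+1,S_{g,1})\to\Conf(\bu+1,S_{g,0})$ induced by gluing a disc, an explicit identification of its levelwise cofibre, and a push-towards-the-boundary retraction showing that the cofibre's realization is contractible, whence contractibility for the closed surface. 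To close the gap you would need either this comparison argument or an appeal to (and proof of) a nontrivial general criterion, such as the topological flag complex criterion of Galatius and Randal-Williams, which your sketch neither proves nor cites.
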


A direct consequence of the theorem is  that 
 $||B\Ga_{g,\bullet+r+1}||\simeq B\Ga_{g,r}$ unless $r=0$ and $g=0,1$ as $\Dif(S_{g,r})$ has contractible components in all but these
 two special cases  \cite{EE,ES}. 

\smallskip

We first show how to deduce Theorem~\ref{mainclosed} from this last result.

\begin{proof}[Proof of Theorem~\ref{mainclosed}]
We want to show that the map $$H_*(\delta_g)\colon H_*(\Ga_{g,1})\to H_*(\Ga_{g,0})$$ induced by gluing a disc,  
is an isomorphism for $*\le \frac{2g}{3}$ and a surjection for
$*\le \frac{2g}{3}+1$. For the first two cases $g=0,1$, the non-trivial statement is that $H_*(\delta_g)$ is surjective for $*=1$. 

Let $r=0$ or $1$. 
The spectral sequence for the semi-simplicial space $B\Ga_{g,\bullet+r+1}$ has $E^1$--term
$E^1_{p,q}=H_q(B\Ga_{g,p+r+1})=H_q(\Ga_{g,p+r+1})$ and converges to $H_*(B\Dif(S_{g,r}))$ which is equal to $H_*(\Ga_{g,r})$ 
unless $r=0$ and $g=0,1$. 
Let $\Dif_0(S_{g,0})$ denote the component of the identity in $\Dif(S_{g,0})$. The spectral sequence associated
to the fibration $$B\Dif_0(S_{g,0})\to B\Dif(S_{g,0})\to B\,\pi_0\!\Dif(S_{g,0})=B\Ga_{g,0}$$ 
has $E^2$-term $E^2_{p,q}=H_p(\Ga_{g,0},H_q(B\Dif_0(S_{g,0})))$ converging to $H_{p+q}(B\Dif(S_{g,0}))$. 
As $H_1(B\Dif_0(S_{g,0}))=0$ because $\Dif_0(S_{g,0})$ is
connected, we get $H_i(B\Dif(S_{g,0}))=H_i(\Ga_{g,0})$ for $i=0,1$. 
Hence, for the purpose of proving the theorem, 
we can also use the semi-simplicial space $B\Ga_{g,\bullet+r+1}$ to model the map $\delta_g$ in the cases $g=0,1$.

Gluing a disc on the last boundary of $S_{g,k+2}$ induces a  
simplicial map $B\Ga_{g,\bullet+2}\to B\Ga_{g,\bullet+1}$, which in turn 
induces a map of the corresponding spectral sequences $$E^1_{p,q}=H_q(\Ga_{g,p+2})\ \rar\ \tilde E^1_{p,q}=H_q(\Ga_{g,p+1}).$$ 
As gluing a disc is left inverse to gluing a pair of pants when both the source
and target surfaces have boundaries, this map is surjective on the $E^1$--term, and an isomorphism 
for $q\le \frac{2g}{3}$ by
Theorem~\ref{main}. Hence we get an isomorphism of the targets $H_*(\Ga_{g,1})$ and $H_*(\Ga_{g,0})$ of the spectral sequences 
in all degrees 
$*\le \frac{2g}{3}$ and a surjection in
degree $*\le \frac{2g}{3}+1$: 
\begin{figure}[ht]
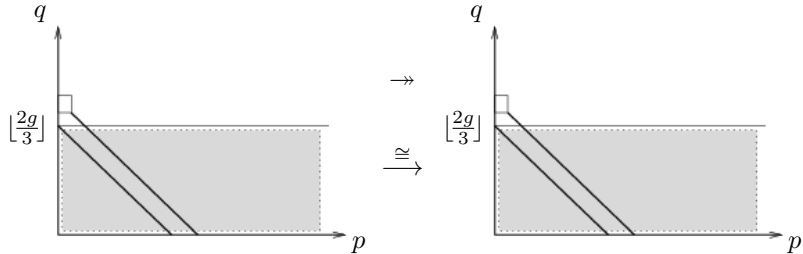

\begin{lpic}{SSiso(0.58,0.58)}
 \lbl[b]{80,15;$\sta{\cong}{\rar}$}
 \lbl[b]{80,35;$\surj$}
 \lbl[b]{70,-3;$p$}
 \lbl[b]{-3,50;$q$}
\lbl[b]{-6,22;$\lfloor\!\frac{2g}{3}\!\rfloor$}
\lbl[b]{170,-3;$p$}
 \lbl[b]{97,50;$q$}
\lbl[b]{94,22;$\lfloor\!\frac{2g}{3}\!\rfloor$}
\end{lpic}
\caption{Map of spectral sequences}\label{SSiso}
\end{figure} 
As shown in Figure~\ref{SSiso}, surjectivity in degree $q=\lfloor\frac{2g}{3}\rfloor+1$ follows from the surjection 
$E^\infty_{0,q}\surj \tilde E^\infty_{0,q}$, which in turns follows from the corresponding surjection in the $E^1$--term by 
the commutativity of the diagram
$$\xymatrix{E^1_{0,q} \ar@{->>}[r] \ar@{->>}[d] & E^\infty_{0,q} \ar[d]\\
\tilde E^1_{0,q} \ar@{->>}[r] & \tilde E^\infty_{0,q}}$$
\end{proof}

\begin{proof}[Proof of Theorem~\ref{resolution}]
Let $\Conf(k,S_{g,r})$ denote the space of configurations of $k$ ordered points in the interior of $S_{g,r}$, each equipped with a framing 
compatible with the orientation of the surface.  
The group $\Dif(S_{g,r})$ acts transitively on $\Conf(k,S_{g,r})$ and the stabilizer of a point is isomorphic to
$\Dif(S_{g,k+r})$. Let $E\Dif(S_{g,r})$ denote a contractible space with a free action of $\Dif(S_{g,r})$. 
By a continuous version of Shapiro's lemma 
$$\Conf(k,S_{g,r})\times_{\Dif(S_{g,r})}E\Dif(S_{g,r})\ \simeq\ B\Dif(S_{g,k+r})\ \simeq\  B\Ga_{g,k+r}$$
where the last equivalence holds by \cite{EE,ES} as long as $k+r>0$. 
We write $\Conf(k,S_{g,r})/\!\!/_{\Dif}:=\Conf(k,S_{g,r})\times_{\Dif(S_{g,r})}E\Dif(S_{g,r})$. In fact, we have an equivalence of 
semi-simplicial spaces $B\Ga_{g,\bullet+r+1}\simeq \Conf(\bullet\!+\!1,S_{g,r})$ for any $r\ge 0$ with  
$$\xymatrix{\Conf(\bullet\!+\!1,S_{g,r})/\!\!/_{\Dif}= \dots\ar[r] \ar@<1ex>[r] \ar@<-1ex>[r]
& \Conf(2,S_{g,r})/\!\!/_{\Dif}\ar@<.5ex>[r] \ar@<-.5ex>[r]& \Conf(1,S_{g,r})/\!\!/_{\Dif}}$$
where, if the framed points at level $k$ are labeled $\overrightarrow{p_0},\dots,\overrightarrow{p_k}$, 
the boundary map $d_i$ forgets the $i$th point
$\overrightarrow{p_i}$.  

To calculate the homotopy type of the above semi-simplicial space, we first consider the semi-simplicial space
$$\xymatrix{\Conf(\bullet\!+\!1,S_{g,r})=&\dots\ar[r] \ar@<1ex>[r] \ar@<-1ex>[r]&\Conf(2,S_{g,r})\ar@<.5ex>[r] \ar@<-.5ex>[r]& \Conf(1,S_{g,r}).}$$
For $r>0$, we can give an explicit retraction $||\Conf(\bullet+1,S_{g,r})||\sta{\sim}{\rar} *$ as follows. 
Gluing a small collar along a boundary
component of $S_{g,r}$ and retracting it shows that each $\Conf(k,S_{g,r})$ is homotopy equivalent to the subspace 
$\Conf_\epsilon(k,S_{g,r})$ of configurations
at least $\epsilon$--distant from that boundary component. Now choose a framed point $\overrightarrow{p}$ in that $\epsilon$--neighborhood. 
We define a retraction from $||\Conf_\epsilon(k,S_{g,r})||\subset ||\Conf(k,S_{g,r})||$ to the 0-simplex represented by
$\overrightarrow{p}$ in $||\Conf(k,S_{g,r})||$: 
A point $x\in ||\Conf_\epsilon(k,S_{g,r})||$ has the form 
$x=[(\overrightarrow{p_0},\dots,\overrightarrow{p_k}),\underline{t}]\in \Conf_\epsilon(k+1,S_{g,r})\x \De^{k}$ for some $k\ge 0$. 
Given $x$, consider the $(k+1)$--simplex  $\{(\overrightarrow{p_0},\dots,\overrightarrow{p_k},\overrightarrow{p})\}\x \De^{k+1}\subset
||\Conf(k,S_{g,r})||$.  In that $(k+1)$--simplex, there is a straight
line from
$[(\overrightarrow{p_0},\dots,\overrightarrow{p_k},\overrightarrow{p}),(\underline{t},0)]
=[(\overrightarrow{p_0},\dots,\overrightarrow{p_k}),\underline{t}]=x$ to
$[(\overrightarrow{p_0},\dots,\overrightarrow{p_k},\overrightarrow{p}),(\underline{0},1)]=[\overrightarrow{p},1]$. Define the
retraction by moving 
at constant speed along that line. This is continuous in $x$. 

As crossing with a contractible space does not change the homotopy type, 
it follows that $ \Conf(\bullet +1 ,S_{g,r})\x E\Dif(S_{g,r})$ is also contractible.
This last semi-simplicial space admits a simplicial diagonal action of
$\Dif(S_{g,r})$ with quotient $ \Conf(\bullet+1,S_{g,r})/\!\!/_{\Dif}$. 
As the action is free, we get that 
$$||\Conf(\bullet+1,S_{g,r})/\!\!/_{\Dif}\,||\ \simeq\ B\Dif(S_{g,r}).$$

\smallskip

The result will follow in the same way for $r=0$ if we can check that  $||\Conf(\bullet+1,S_{g,0})||$ is also contractible.
Gluing a disc on the boundary
component of $S_{g,1}$ induces a simplicial map 
$\Conf(\bullet+1,S_{g,1})\to \Conf(\bullet+1,S_{g,0})$. 
It is a levelwise inclusion and thus a cofibration. 
At each simplicial level $k$, the cofiber $\Conf(k+1,S_{g,1})\big/\Conf(k+1,S_{g,0})$ can be identified with 
$$\bigvee_{i=0}^k\big[S(D^2)\x\Conf(k,S_{g,1})\big]\Big/\big[S(\del D^2)\x\Conf(k,S_{g,1})\big]$$
where the index $i$ in the wedge records which one of the $k+1$ framed points $\overrightarrow{p_0},\dots,\overrightarrow{p_k}$ 
is closest to the center of the glued disc, 
$S(D^2)$ denotes the sphere bundle of $D^2$ and records the position and framing of that point, and 
$\Conf(k,S_{g,1})$ records the position and framing of the $k$ other points.  
A configuration with no point close or closest to the center of the disc in
$S_{g,0}$ is identified with the basepoint. 
When the point closest to the center of the disc moves away from the center, the configuration is identified
with the basepoint, which is why we mod out by $S(\del D^2)\x\Conf(k,S_{g,1})$.

The cofiber at level $k$ can be rewritten as $(S(D^2)/_{S(\del D^2)})\wedge\bigvee_{0}^k\Conf(k,S_{g,1})_+$ and 
the cofiber of the simplicial map is the semi-simplicial space 
$$(S(D^2)/_{S(\del D^2)})\wedge\bigvee_{i=0}^\bullet\Conf(\bullet,S_{g,1})_+$$ with boundary maps 
on $\bigvee_{0}^\bullet\Conf(\bullet,S_{g,1})_+$ induced from  
$\Conf(\bullet+1,S_{g,0})$: Denoting the points in a configuration in the $j$th summand 
$\overrightarrow{p_0},\dots,\overrightarrow{p_{j-1}},\ $ $\overrightarrow{p_{j+1}},\dots,\overrightarrow{p_k}$, the boundary map $d_i$ on this summand forgets 
the $i$th point $\overrightarrow{p_i}$, unless $j=i$ in which case it just maps to the basepoint. 

By the same argument as above, 
the semi-simplicial space $\bigvee_{0}^\bullet\Conf(\bullet,S_{g,1})_+$ 
is equivalent to $\bigvee_{0}^\bullet\Conf_\epsilon(\bullet,S_{g,1})_+$
 and we can again define a retraction of this subspace by
choosing a point $\overrightarrow{p}$ in the $\epsilon$--neighborhood of the boundary. This time, a point in the realization has the
form  $x=[j,(\overrightarrow{p_1},\dots,\overrightarrow{p_{j-1}},\overrightarrow{p_{j+1}},\dots,\overrightarrow{p_k}),\underline{t}]$ and we use the straight line in the
simplex
$\{(j,(\overrightarrow{p_1},\dots,\overrightarrow{p_{j-1}},\overrightarrow{p_{j+1}},\dots,\overrightarrow{p_k},\overrightarrow{p}))\} \x \De^k$, from $x$ to the point
$[j,\overrightarrow{p},1]$, which is identified with the basepoint for all $j$.

Hence the cofiber of the map  $\Conf(\bullet+1,S_{g,1})\to \Conf(\bullet+1,S_{g,0})$ is contractible. As
the collapsed space $\Conf(\bullet+1,S_{g,1})$ was contractible, we have that $\Conf(\bullet+1,S_{g,0})$ is also 
contractible and the result follows for $r=0$. 
\end{proof}

\section{Appendix: Simplicial complexes}

This short section gives the background material on simplicial complexes and piecewise linear topology needed in the
rest of the paper. In particular, we consider joins of complexes and state the simplicial approximation theorem.

\medskip

Combinatorially, a {\em simplicial complex} $X=(X_0,\F)$ is a set of vertices $X_0$ together with a collection $\F$ 
of subsets of $X_0$ closed under taking subsets and
containing all the singletons. The
subsets of cardinality $p+1$ are called the {\em $p$--simplices} of $X$. 
If $\s=\lgl x_0,\dots,x_p\rgl$ is a $p$--simplex of $X$, the subsets $\lgl x_{i_0},\dots,x_{i_k}\rgl$ of $\s$ are called its {\em
  faces}.  

To a simplicial complex $X$, one can associate a topological space, its {\em realization}, denoted $|X|$ or just $X$ again, build as
follows: $|X|$ has a 0-cell for each vertex of $X$, a 1-cell between any to vertices $v,w$ such that $\lgl v,w\rgl$ is a simplex of
$X$, and more generally a $p$-simplex $\Delta^p$ for each $p$-simplex $\lgl v_0,\dots,v_p\rgl$ of $X$ with its codimension one faces 
identified with the simplices associated to the faces $\lgl v_0,\dots,\widehat{v_j},\dots,v_p\rgl$ of the simplex. 
When we talk about topological properties of a simplicial complex $X$, such as its connectivity, we mean the corresponding
property for this associated topological space. 

\medskip

The {\em join} $X*Y$ of two simplicial complexes $X$ and $Y$ is the
simplicial complex with vertices $X_0\sqcup Y_0$ and a $(p+q+1)$--simplex 
$\s_X*\s_Y=\lgl x_0,\dots,x_p,y_0,\dots,y_q\rgl$ for each 
$p$--simplex $\s_X=\lgl x_0,\dots,x_p\rgl$  of $X$ and $q$--simplex $\s_Y=\lgl y_0,\dots,y_q\rgl$ of
$Y$. Note that $|X*Y|=|X|*|Y|$, i.e. the realization of the join complex is the (topological) join of the realization of the two
complexes. This follows from the fact that it is true for each pair of simplices.

\medskip

Recall that a space (or simplicial complex) $X$ is called {\em $n$--connected} if $\pi_i(X)=0$ for all $i\le n$ (where
$\pi_i(X):=\pi_i(|X|)$ if $X$ is a simplicial complex). For $n=-1$, we use the convention that {\em $(-1)$--connected} means
non-empty. (For $n\le -2$, $n$--connected is a void property.) 
Note that, by Hurewicz theorem, a simply connected space $X$ is $n$-connected, $n\ge 2$, if and only if $H_*(X)=0$ for $0<*\le n$. 

The following proposition, which goes back at least to Milnor, tells us how to compute the connectivity of a join in
terms of the connectivity of the pieces.

\begin{prop}\cite[Lem 2.3]{Mil}\label{joinconn}
Consider the join $X=X_1*\dots*X_k$ of $k$ non-empty spaces. If each $X_i$ is $n_i$--connected, then $X$ is 
$\big(\big(\sum_{i=1}^k(n_i+2)\big)-2\big)$--connected. 
\end{prop}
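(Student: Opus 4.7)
The plan is to proceed by induction on $k$, reducing the statement to the binary case, which is the real content. The base case $k=1$ is tautological since $(n_1+2)-2=n_1$. For the inductive step, assuming the result for $k$ factors, write $X_1*\dots*X_{k+1}=(X_1*\dots*X_k)*X_{k+1}$; by hypothesis the first factor is $N$-connected with $N=\sum_{i=1}^k(n_i+2)-2$, and applying the binary case to this and $X_{k+1}$ produces a space that is $(N+n_{k+1}+2)$-connected, which is exactly the desired bound for $k+1$ factors.

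So the task reduces to proving the binary statement: if $X$ is $m$-connected and $Y$ is $n$-connected (both nonempty), then $X*Y$ is $(m+n+2)$-connected. The plan is to use the standard decomposition
\[
X*Y \;=\; (CX\times Y)\cup(X\times CY), \qquad (CX\times Y)\cap(X\times CY)=X\times Y,
\]
where $CX,CY$ are the cones. Each of $CX\times Y$ and $X\times CY$ deformation retracts onto $Y$ and $X$ respectively. The Mayer--Vietoris sequence (and van Kampen in low degrees) then yields, for $q\ge 1$,
\[
\tilde H_q(X*Y)\;\cong\;\tilde H_{q-1}(X\wedge Y),
\]
so it suffices to control the connectivity of $X\wedge Y$. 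The Künneth formula applied to $(X,pt)$ and $(Y,pt)$ shows $\tilde H_i(X\wedge Y)=0$ for $i\le m+n+1$ (both factors' reduced homologies start in degrees $\ge m+1$ and $\ge n+1$ respectively, so the tensor and Tor contributions vanish up to total degree $m+n+1$). Consequently $\tilde H_q(X*Y)=0$ for $q\le m+n+2$.

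To upgrade from homology to homotopy, I will show that $X*Y$ is simply connected whenever $m+n+2\ge 2$: the decomposition above together with van Kampen, using that $CX\times Y\simeq Y$ and $X\times CY\simeq X$ are path-connected with $X\times Y$ path-connected in all nontrivial cases, makes $\pi_1(X*Y)$ trivial. Then Hurewicz's theorem applied iteratively (or at once, in the simply connected range) converts the homology vanishing into homotopy vanishing, giving the claimed $(m+n+2)$-connectivity.

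The main obstacle I expect is the careful handling of the boundary conventions, particularly the edge case $m=n=-1$ (both spaces merely nonempty), where one must verify directly that the join of two nonempty spaces is path-connected---this follows because any two points, whether in $X$, in $Y$, or in the open join, can be connected by a path passing through the join segment between a chosen $x\in X$ and $y\in Y$. Similarly, the transition from the homological connectivity bound to the homotopical one requires checking that simple connectivity indeed holds at each step of the induction, so that Hurewicz applies; this is where the $+2$ in the formula becomes essential, as it ensures that once two or more factors are joined together the result is simply connected regardless of the individual $n_i$.
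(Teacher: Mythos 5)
The paper offers no proof of this proposition at all: it is quoted directly from Milnor \cite[Lem 2.3]{Mil}, so there is no internal argument for your proof to coincide with. Your route --- reduce to the binary join by induction, decompose $X*Y=(CX\times Y)\cup_{X\times Y}(X\times CY)$, identify $\tilde H_q(X*Y)\cong \tilde H_{q-1}(X\wedge Y)$ via Mayer--Vietoris (using that the two projections are split surjective on homology), bound $\tilde H_*(X\wedge Y)$ by K\"unneth, and upgrade to homotopy by van Kampen plus Hurewicz --- is the standard modern argument and is correct in outline; since the proposition is applied in the paper to realizations of simplicial complexes, the CW/well-pointedness hypotheses implicitly needed for the decomposition and for the K\"unneth computation of the smash are satisfied.

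There is, however, one step that fails as stated: the assertion that $X\times Y$ (and the two pieces) are path-connected ``in all nontrivial cases''. If one factor is only $(-1)$--connected it may be disconnected (e.g.\ $X=S^0$ and $Y$ simply connected, where $X*Y=\Sigma Y$), and then the intersection $X\times Y$ is disconnected, so van Kampen in the form you invoke does not apply. Note also that simple connectivity is already required when $m+n+2=1$ (i.e.\ $m=-1$, $n=0$), not only when $m+n+2\ge 2$, since $1$--connected means $\pi_1=0$; and the disconnected case genuinely occurs in the paper, where the proposition is applied to joins several of whose factors are merely nonempty. The repair is routine: whenever $m+n\ge -1$ at least one factor, say $Y$, is path-connected; writing $X=\bigsqcup_\alpha X_\alpha$ in path components gives $X*Y=\bigcup_\alpha (X_\alpha * Y)$ with every pairwise intersection equal to $Y$. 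Your amalgamation computation applied to each $X_\alpha*Y$ (both factors now path-connected, both maps out of $\pi_1(X_\alpha\times Y)$ surjective) shows each piece is simply connected, and then $\pi_1(X*Y)$ is a colimit of trivial groups amalgamated over $\pi_1(Y)$, hence trivial; alternatively, invoke the groupoid version of van Kampen. With that adjustment your proof is complete.
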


Note that the lemma implies that $X$ is contractible whenever some $X_i$ is contractible.

\medskip

Given a simplex $\s$ of a simplicial complex $X$, the (closed) {\em star of $\s$}, $\Star(\s)$, 
is the subcomplex of $X$ of simplices containing $\s$, together with their faces. 
The {\em link of $\s$}, $\link(\s)\subset \Star(\s)$, is the subcomplex of the star of simplices disjoint from $\s$. 
The link can also be described as the subcomplex of simplices $\tau$ disjoint from $\s$ such that $\tau*\s$ is again a simplex of
$X$, and 
 $$\Star(\s)=\link(\s)*\s$$
where $\s$ in the formula denotes the subcomplex of $X$ defined by $\s$ and its faces. 

\medskip

A {\em PL (or manifold) triangulation} $K$ of an $n$--manifold $M$ 
is a simplicial complex $K$ such that $|K|\cong M$ and with the
property that the link of a $p$--simplex $\s$ of $K$ is PL homeomorphic to the boundary of an $(n-p)$--simplex 
if $\s$ not included in $\del K$, 
 or to an $(n-p-1)$--simplex if $\s\subset \del K$.

\begin{lem}\label{links}(See for example \cite[Lem 1.13]{Hu}\label{D*S}.)
If $D^n$ and $S^n$ denote PL triangulations of the $n$--disc and $n$--sphere, then 
\begin{enumerate}[$(i)$]
\item $|D^n*D^m|$ is an $(n+m+1)$--disc,
\item $|D^n*S^m|$ is an $(n+m+1)$--disc, and 
\item $|S^n*S^m|$ is an $(n+m+1)$--sphere. 
\end{enumerate}
\end{lem}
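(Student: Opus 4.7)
The plan is to reduce to canonical triangulations and argue inductively, using associativity of the join together with the cone and suspension identifications.

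First I would observe that the realization of the simplicial join $X*Y$ equals the topological join $|X|*|Y|$, and that a PL homeomorphism $|X|\to |X'|$ naturally induces a PL homeomorphism $|X*Y|\cong|X'*Y|$. Thus the PL-homeomorphism type of $|X*Y|$ depends only on those of $|X|$ and $|Y|$, and I may replace the given triangulations by any PL-equivalent ones. Using the fact that any PL triangulation of $D^n$ (resp.~$S^n$) is PL-equivalent to the standard simplex $\Delta^n$ (resp.~to $\partial\Delta^{n+1}$), I reduce to these canonical models. Case $(i)$ is then immediate since $\Delta^n*\Delta^m=\Delta^{n+m+1}$ as simplicial complexes.

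For $(ii)$ and $(iii)$, I would use associativity $(X*Y)*Z\cong X*(Y*Z)$ together with the basic cone and suspension identifications $|\mathrm{pt}*X|\cong C|X|$ and $|S^0*X|\cong \Sigma|X|$, noting that $CS^n\cong D^{n+1}$, $CD^n\cong D^{n+1}$, and $\Sigma S^n\cong S^{n+1}$. Up to PL homeomorphism, $S^n\cong (S^0)^{*(n+1)}$ (iterated suspension) and $D^n\cong \mathrm{pt}*S^{n-1}$ (cone on a sphere), so $(iii)$ follows from
$$S^n*S^m\cong (S^0)^{*(n+1)}*(S^0)^{*(m+1)} = (S^0)^{*(n+m+2)}\cong S^{n+m+1},$$
and $(ii)$ from
$$D^n*S^m\cong \mathrm{pt}*(S^{n-1}*S^m)\cong \mathrm{pt}*S^{n+m}\cong D^{n+m+1},$$
using $(iii)$ for the middle step. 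Case $(i)$ can also be recovered analogously from $(ii)$ by writing $D^m\cong \mathrm{pt}*S^{m-1}$.

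The main obstacle is the preparatory reduction: verifying that the PL type of a join depends only on the PL types of the factors (routine from the definition of the topological join), and that any PL triangulation of $D^n$ or $S^n$ is PL-equivalent to the standard one. The latter is a classical result in PL topology, following inductively from the link condition in the definition (any combinatorial ball PL-collapses to a vertex), and once granted, the remaining steps are formal manipulations with associativity and the cone/suspension identifications.
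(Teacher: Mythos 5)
The paper itself offers no proof of this lemma -- it is quoted from Hudson's book \cite[Lem 1.13]{Hu} -- so the comparison is with the standard PL-topology argument, whose overall shape (reduce to standard models, then manipulate joins) you have correctly identified. The trouble is that both load-bearing steps of your sketch are mis-justified. Take the reduction first. If ``PL triangulation of the $n$--disc/$n$--sphere'' is read as in Hudson, i.e.\ a complex PL homeomorphic to $\Delta^n$, resp.\ $\partial\Delta^{n+1}$, then your reduction is true by definition and carries no content. If instead you read the appendix definition literally (a combinatorial manifold whose realization is homeomorphic to $D^n$ or $S^n$), then the claim that every such triangulation is PL-equivalent to the standard one is the uniqueness of PL structures on discs and spheres: a genuinely deep theorem for $n\neq 4$ (Moise in low dimensions, Smale--Stallings--Zeeman via handle/engulfing theory in high dimensions) and \emph{open} in dimension $4$. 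In neither reading does it ``follow inductively from the link condition'', and the parenthetical justification ``any combinatorial ball PL-collapses to a vertex'' is simply false: non-collapsible triangulated balls exist already in dimension $3$ (Bing), and even granted collapsibility one would need Whitehead's regular neighbourhood theorem, not a formal induction, to conclude PL-standardness.

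Second, once you are at the standard models the remaining steps are not formal. The identity $\Delta^n*\Delta^m=\Delta^{n+m+1}$ is literal, but $\partial\Delta^{n+1}*\partial\Delta^{m+1}$ is \emph{not} the boundary of a simplex, and the ``basic identifications'' you invoke -- that $|\mathrm{pt}*S^n|$ is a PL $(n+1)$--ball, that $|S^0*S^n|$ is a PL $(n+1)$--sphere, that $S^n\cong (S^0)^{*(n+1)}$ as PL complexes -- are themselves instances of statements $(ii)$ and $(iii)$, so as written the argument is circular. The standard way to break the circle (Hudson, Rourke--Sanderson) is to prove first that a compact convex cell is a PL ball and its boundary a PL sphere (``pseudo-radial projection''), together with the fact that the join of PL homeomorphisms is PL (this part of your sketch is fine: make the homeomorphisms simplicial isomorphisms after subdivision and take the simplicial join). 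Then, for example, $\partial\Delta^{n+1}*\partial\Delta^{m+1}$ is the boundary complex of a convex polytope (place the two simplices in complementary linear subspaces with the origin in the interior of each and take the convex hull), hence a PL sphere, and the cone and suspension cases follow from the same convexity lemma, giving $(i)$--$(iii)$ by your associativity bookkeeping. With that ingredient supplied your outline goes through; without it, the crucial PL statements are assumed rather than proved.
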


Applying the Lemma to a PL triangulation $K$ of an $n$--manifold, we get that, for a $p$--simplex $\s$, as  
$\Star(\s)=\link(\s)*\s$, it is of the type $S^{n-p-1}*D^p$ (or $D^{n-p-1}*D^p$ if $\s\subset\del K$), and hence the star  
of any simplex is an $n$--disc. Note moreover that, if $\s\not\subset \del K$, the boundary of $\Star(\s)$ is the $(n-1)$--sphere $\del\s*\link(\s)$.

\medskip

A main theorem we need for the connectivity results in Section~\ref{connectivity} is the following: 

\begin{thm}[Simplicial approximation]\cite{Zee}\label{approx} 
Let $K,L$ be finite simplicial complexes, and $L$ a subcomplex of $K$. Let $f\colon |K|\to|X|$ be a continuous map such that the restriction
$f|_L$ is a simplicial map from $L$ to $X$. Then there exists a relative subdivision $(K_r,L)$ of $(K,L)$ and a simplicial 
map $g\colon K_r\to X$ such that $g|_L=f|_L$ and $g$ is homotopic to $f$ keeping $L$ fixed.    
\end{thm}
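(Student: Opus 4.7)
The plan is to adapt the classical proof of simplicial approximation to the relative setting. The key tool is the open cover $\{\mathrm{st}_X(v) : v \in X_0\}$ of $|X|$, where $\mathrm{st}_X(v)$ denotes the open star of $v$ (the union of the interiors of simplices of $X$ containing $v$). Pulling back through $f$ yields an open cover $\{f^{-1}(\mathrm{st}_X(v))\}_{v\in X_0}$ of the compact space $|K|$.

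The first main step is to produce a relative subdivision $K_r$ of $(K,L)$ fine enough that for every vertex $w$ of $K_r$, the closed star $\Star_{K_r}(w)$ lies inside some $f^{-1}(\mathrm{st}_X(v))$. For vertices $w$ lying in the interior of $|K|\setminus |L|$ this follows from iterated barycentric subdivision together with the Lebesgue number lemma applied to the above cover. For $w\in L$ one needs $f(\Star_{K_r}(w)) \subset \mathrm{st}_X(f(w))$; since $f|_L$ is simplicial, $f$ already sends $\Star_L(w)$ into the closed star of $f(w)$, and after sufficient relative subdivision $\Star_{K_r}(w)$ can be made to lie in any prescribed $|K|$-neighborhood of $\Star_L(w)$, which by uniform continuity of $f$ can in turn be arranged to map into $\mathrm{st}_X(f(w))$.

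With the star condition in hand, define the map $g\colon K_r\to X$ on vertices by $g(w)=f(w)$ for $w\in L$ (well-defined as a vertex of $X$ because $f|_L$ is simplicial) and, for $w\notin L$, by choosing any vertex of $X$ whose open star contains $f(\Star_{K_r}(w))$. To check that this extends to a simplicial map, let $\langle v_0,\dots,v_p\rangle$ be a simplex of $K_r$: the intersection $\bigcap_i \Star_{K_r}(v_i)$ is nonempty (it contains the simplex itself), so $\bigcap_i \mathrm{st}_X(g(v_i))\neq\emptyset$, forcing $\langle g(v_0),\dots,g(v_p)\rangle$ to span a simplex of $X$. Extend $g$ linearly on each simplex. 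The straight-line homotopy $H(x,t)=(1-t)f(x)+tg(x)$ is then well-defined, since $f$ and $g$ both send each simplex of $K_r$ into a single closed simplex of $X$ (a convex set), and it is constant on $|L|$ because $g|_L=f|_L$.

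The hard part is the relative subdivision step: one must shrink the stars of all vertices of $K_r$—including those in $L$—while leaving $L$ itself untouched. This is the one substantive difference from the absolute simplicial approximation theorem, and is typically handled by performing iterated barycentric subdivision away from $|L|$ and treating a regular neighborhood of $|L|$ in $|K|$ with care, arranging that stars of vertices $w\in L$ in the relative subdivision can be made to lie in arbitrarily small $|K|$-neighborhoods of $\Star_L(w)$.
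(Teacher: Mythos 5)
The paper itself does not prove this statement---it is quoted from Zeeman's paper \cite{Zee} and used as a black box---so your proposal has to stand on its own, and as written it breaks precisely at the vertices of $L$. You ask for a relative subdivision in which, for every vertex $w$, the closed star $\Star_{K_r}(w)$ maps under $f$ into an open star of $X$, with the open star being $\mathrm{st}_X(f(w))$ when $w\in L$. This is impossible in general: since $L$ is not subdivided, $\Star_{K_r}(w)\supseteq \Star_L(w)$, and if $\lgl w,w'\rgl$ is an edge of $L$ with $f(w')\neq f(w)$, then $f(w')$ is a vertex of $X$ distinct from $f(w)$ and hence does not lie in $\mathrm{st}_X(f(w))$ (the only vertex in the open star of $f(w)$ is $f(w)$ itself). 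Switching to open stars on the source does not rescue the step: in any subdivision rel $L$ there are simplices containing $w$ of the form $\s *\tau$ with $w\in\s\in L$ and $\s$ unsubdivided, and their interiors contain points arbitrarily close to the faces of $\s$ opposite to $w$; at such points $f$ is controlled only by continuity near $f(\del\s)$, not by $\mathrm{st}_X(f(w))$, so no amount of fineness away from $|L|$ forces their images into $\mathrm{st}_X(f(w))$. This is exactly where your appeal to uniform continuity fails: a small neighbourhood of $\Star_L(w)$ maps into a small neighbourhood of the \emph{closed} star of $f(w)$, which is not contained in the \emph{open} star.

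The missing idea---and the real content of Zeeman's theorem beyond the absolute simplicial approximation theorem---is a preliminary homotopy of $f$ relative to $|L|$ that makes $f$ ``standard'' near $L$: one squeezes a derived neighbourhood $N$ of $L$ (in a subdivision of $K$ rel $L$) onto $|L|$, replacing $f$ on $N$ by the composite of the canonical retraction $N\to |L|$ with the simplicial map $f|_L$; since the squeeze is homotopic to the identity rel $|L|$, this does not change the homotopy class rel $|L|$. After this normalization the behaviour of $f$ near $|L|$ is governed by $f|_L$, the star condition at vertices of $L$ becomes attainable, and the approximation can be completed with all further subdivision confined to the part of $K$ away from $L$. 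Without some such step your definition of $g$ on vertices of $L$ cannot be made to satisfy the star condition at all. (A smaller point: your justification of the straight-line homotopy is imprecise---$f$ need not send a simplex of $K_r$ into a single closed simplex of $X$; what the star condition actually yields is that $g(x)$ lies in the carrier of $f(x)$ for each $x$, which is what makes the linear homotopy well defined and constant on $|L|$.)
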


We use this theorem in Section~\ref{connectivity} to approximate any map from a sphere into a simplicial complex by a simplicial map,
and to approximate a null-homotopy of such a map, now simplicial, by a simplicial map from the disc with a triangulation extending
that of the sphere. Hence we apply the theorem for the cases $(K,L)=(S^k,\emp)$ and $(K,L)=(D^{k+1},S^k)$. Note that the complexes $X$ we
work with are usually not finite, but when applying the theorem, we can restrict to the (finite) subcomplex of $X$ containing the image
of the sphere or the disc. We also need the triangulations of the spheres and discs to be PL triangulations, and this can be obtained
by choosing some PL triangulation of $S^k$ (resp. $D^{k+1}$) and applying the theorem to it, noting that the subdivision (resp. relative
subdivision) preserves the PL property.

\end{document}